\newtheorem{theorem}{Theorem}[section]
\newtheorem{proposition}[theorem]{Proposition}
\newtheorem{definition}{Definition}[section]
\newtheorem{remark}{Remark}
\title{Efficient Numerical Algorithms based on Difference Potentials for Chemotaxis Systems in 3D}
\author{Yekaterina Epshteyn \thanks{Department of Mathematics, The University of Utah, 155 S 1400 E Rm. 233, Salt Lake City, UT, 84112, USA, epshteyn@math.utah.edu} \and Qing Xia \thanks{Department of Mathematics, The University of Utah, 155 S 1400 E Rm. 233, Salt Lake City, UT, 84112, USA, xia@math.utah.edu}}
\providecommand{\keywords}[1]{\noindent\textbf{Keywords} #1}
\providecommand{\subclass}[1]{\noindent\textbf{AMS Subject Classification} #1}
\begin{document}

\maketitle

%%%%%%%%%%%%%%%%%%%%%%%%%%%%%%%%%%%%%%%%%%%%%%%%%%%%%%%%%%%%%%%%%%%%%%%%%%%%%%%%%%%%%%%%%%%%%%%%%%%%%%%%
% Abstract
%%%%%%%%%%%%%%%%%%%%%%%%%%%%%%%%%%%%%%%%%%%%%%%%%%%%%%%%%%%%%%%%%%%%%%%%%%%%%%%%%%%%%%%%%%%%%%%%%%%%%%%%
\begin{abstract}
In this work, we propose efficient and accurate numerical algorithms based on Difference Potentials Method for numerical solution of chemotaxis systems and related models in 3D. The developed algorithms handle 3D irregular geometry with the use of only Cartesian meshes and employ Fast Poisson Solvers.  In addition, to further enhance computational efficiency of the methods, we design a Difference-Potentials-based domain decomposition approach which allows mesh adaptivity and easy parallelization of the algorithm in space. Extensive numerical experiments are presented to illustrate the accuracy, efficiency and robustness of the developed numerical algorithms.
\end{abstract}

\keywords{chemotaxis models; convection-diffusion-reaction systems; finite difference; finite volume; Difference Potentials Method; Cartesian meshes; irregular geometry; positivity-preserving algorithms; spectral approximation; spherical harmonics; mesh adaptivity; domain decomposition; parallel computing}

\subclass{65M06, 65M70, 65M99, 92C17, 35K57}

%%%%%%%%%%%%%%%%%%%%%%%%%%%%%%%%%%%%%%%%%%%%%%%%%%%%%%%%%%%%%%%%%%%%%%%%%%%%%%%%%%%%%%%%%%%%%%%%%%%%%%%%
% Section 1: Introduction
%%%%%%%%%%%%%%%%%%%%%%%%%%%%%%%%%%%%%%%%%%%%%%%%%%%%%%%%%%%%%%%%%%%%%%%%%%%%%%%%%%%%%%%%%%%%%%%%%%%%%%%%

\section{Introduction} \label{sec:intro}
In this work we develop efficient and accurate numerical algorithms based on Difference Potentials Method (DPM) for the Patlak-Keller-Segel (PKS) chemotaxis model and related problems in 3D. The proposed methods handle irregular geometry with the use of only Cartesian meshes, employ Fast Poisson Solvers, allow easy parallelization and adaptivity in space.

Chemotaxis refers to mechanisms by which cellular motion occurs in response to an external stimulus, for instance a chemical one. Chemotaxis is an essential process in many medical and biological applications, including cell aggregation and pattern formation mechanisms and tumor growth. Modeling of chemotaxis dates back to the pioneering work by Patlak, Keller and Segel \cite{keller_1970,keller_1971,Patlak_1953}. The PKS chemotaxis model consists of coupled convection-diffusion and reaction-diffusion partial differential equations:
\begin{equation}\label{eqn:minimal_chemotaxis}
\left\{\begin{aligned}
&\rho_t+\nabla\!\cdot\!\left(\chi\rho\nabla c\right)=\Delta\rho,\\
&\alpha c_t=\Delta c-\gamma_cc+\gamma_\rho\rho,\end{aligned}\right.
\quad (x,y,z)\in\Omega\subset\mathbb{R}^3,~~t>0,
\end{equation}
subject to zero Neumann boundary conditions and the initial conditions on the cell density $\rho(x,y, z, t)$ and on the chemoattractant concentration $c(x,y,z,t)$. In the system (\ref{eqn:minimal_chemotaxis}), the coefficient $\chi$ is a chemotactic sensitivity constant, $\gamma_{\rho}$ and $\gamma_c$ are the reaction coefficients. The parameter $\alpha$ is equal to either 1 or 0, which corresponds to the ``parabolic-parabolic'' or reduced ``parabolic-elliptic'' coupling, respectively. In this work, we will focus on the PKS chemotaxis model \eqref{eqn:minimal_chemotaxis} with $\alpha=\gamma_\rho=\gamma_c=1$, but the developed methods are not restricted by this assumption and can be easily extended to more general chemotaxis systems and related models.

In the past several years, chemotaxis models have been extensively studied (see, e.g., \cite{HV,Hillen_2008,Hor03,Hor04,Per} and references below). A known property of many chemotaxis systems is their ability to represent a concentration phenomenon that is mathematically described by fast growth of solutions in small neighborhoods of concentration points/curves. The solutions may blow up or may exhibit a very singular behavior. This blow-up represents a mathematical description of a cell concentration phenomenon that occurs in real biological systems, see, e.g., \cite{Adl,Bon,BB91,BB95,CP,CR,Nan,PHK}. 

Capturing blowing up or spiky solutions is a challenging task numerically, but at the same time design of robust numerical algorithms is crucial for the modeling and analysis of chemotaxis mechanisms. Let us briefly review some of the recent numerical methods that have been proposed for the ``parabolic-parabolic'' coupling of chemotaxis models in the literature.  High-order discontinuous Galerkin methods have been proposed in \cite{MR2511733,Epshteyn_2008_discontinuous} and \cite{Li2017} for chemotaxis models in 2D rectangular domains. A flux corrected finite element method is designed in 2D in \cite{Strehl_2010}, and is extended to chemotaxis models on stationary surface domains and cylindrical domains in \cite{Sokolov_2013,Strehl_2013}. A finite-volume based fractional step numerical method is proposed in \cite{Tyson_2000} for models in 2D domains. However, the operator splitting approach may not be applicable when the convective part of the chemotaxis system is not hyperbolic. A simpler and more efficient second order positivity-preserving finite-volume central-upwind scheme is developed for 2D rectangular domains in \cite{Chertock_2008}, and is extended to fourth-order accurate numerical method in \cite{Chertock_2017}. A novel numerical method based on symmetric reformulation of the 2D PKS chemotaxis system has been developed and analyzed in \cite{MR3766384}. The proposed method is both positivity-preserving and asymptotic preserving. A random particle blob method for PKS chemotaxis model has been proposed and analyzed in a series of papers \cite{MR3584546,MR3717909}. A new hybrid-variational approach which is based on the generalization of the implicit Wasserstein scheme \cite{ISI:000071938700001} has been proposed and analyzed for 2D PKS chemotaxis system in \cite{MR3423264}. Finally, an upwind Difference Potentials Method is proposed in \cite{Epsh1} to approximate chemotaxis models in 2D irregular domains, using uniform Cartesian meshes and Fast Poisson Solvers. Note that among the methods that have been proposed, only \cite{Strehl_2013} is designed to handle chemotaxis models in 3D irregular domains by the use of unstructured meshes. For a more detailed review on  recent developments of numerical methods for chemotaxis problems, the reader can consult \cite{Alina_2018} and \cite{MR2958929}.

In general, the design of numerical methods on unstructured meshes is more computationally intensive, in comparison to design of methods on structured meshes. Thus, in this work, we extend the numerical algorithm designed in \cite{Epsh1} to chemotaxis systems in 3D irregular domain. The developed numerical methods based on Difference Potentials Method can handle 3D irregular geometry with the use of only Cartesian meshes and employ Fast Poisson Solvers. In addition, to further enhance computational efficiency of the numerical algorithms, we design a Difference-Potentials-based domain decomposition approach which allows mesh adaptivity and easy parallelization of the algorithm in space. The proposed numerical algorithms are shown to be positivity-preserving, robust, accurate and computationally efficient.

The paper is organized as follows. In Section~\ref{sec:dpm_sd}, we formulate a positivity-preserving upwind Difference Potentials Method for accurate and efficient approximations of solutions to chemotaxis models in spherical domains. Next, in Section~\ref{sec:domain_decomposition}, a domain decomposition approach based on Difference Potentials Method is introduced, which improves the efficiency of the algorithm at no loss of accuracy. In Section~\ref{sec:numerical_results}, extensive numerical experiments (convergence studies in space and in time, long-time simulations, etc.) are presented to illustrate the accuracy, efficiency and robustness of the developed numerical algorithms. 

%%%%%%%%%%%%%%%%%%%%%%%%%%%%%%%%%%%%%%%%%%%%%%%%%%%%%%%%%%%%%%%%%%%%%%%%%%%%%%%%%%%%%%%%%%%%%%%%%%%%%%%%
% Section 2: Single Domain Approach
%%%%%%%%%%%%%%%%%%%%%%%%%%%%%%%%%%%%%%%%%%%%%%%%%%%%%%%%%%%%%%%%%%%%%%%%%%%%%%%%%%%%%%%%%%%%%%%%%%%%%%%%

\section{An Algorithm Based on DPM}\label{sec:dpm_sd}

The current work is an extension of the work \cite{Epsh1} in 2D, to 3D chemotaxis systems and to adaptive algorithms in space that allows easy parallelization. For the time being, we will consider the PKS chemotaxis model in a spherical domain, but the proposed methods can be extended to more general domains in 3D (and the main ideas of the algorithms stay the same). We employ a finite-volume-finite-difference scheme as the underlying discretization of the model \eqref{eqn:minimal_chemotaxis} in space, combined with the idea of Difference Potentials Method (\cite{Ryab} and some very recent work \cite{MR3659255,Epsh1,Epsh,MR3817808}, etc.), that provides flexibility to handle irregular domains accurately and efficiently by the use of simple Cartesian meshes.

%%%%%%%%%%%%%%%%%%%%%%%%%%%%%%%%%%%%%%%%%%%%%%%%%%%%%%%%%%%%%%%%%%%%%%%%%%%%%%%%%%%%%%%%%%%%%%%%%%%%%%%%%%%%%%%%

\paragraph{Introduction of the Auxiliary Domain:}
As a first step of the proposed method, we embed the domain $\Omega$ of model \eqref{eqn:minimal_chemotaxis} into a computationally simple auxiliary domain $\Omega^0 \subset \mathbb{R}^3$ that we will select to be a cube.  Next we discretize the auxiliary domain $\Omega^0$ using a Cartesian mesh, with uniform cells $D_{j,k,l}=[x_{j-\frac{1}{2}},x_{j+\frac{1}{2}}]\times[y_{k-\frac{1}{2}},y_{k+\frac{1}{2}}]\times[z_{l-\frac{1}{2}},z_{l+\frac{1}{2}}]$ of volume $\Delta x\Delta y\Delta z$  centered at the point $(x_j,y_k,z_l)$, ($j,k,l=1,\dots,N$), and we assume here for simplicity, $h:=\Delta x=\Delta y=\Delta z$. Note that, we select the same auxiliary domain and the same mesh for the approximation of $\rho$ and $c$, whereas in general, the auxiliary domains and meshes for $\rho$ and $c$ need not be the same. After that, we define the standard 7-point stencil with center placed at $(x_j,y_k,z_l)$  that we will consider as a part of the discretization of model \eqref{eqn:minimal_chemotaxis}:
\begin{align}\label{stencil:7-point}
    \mathcal{N}^7_{j,k,l}:=\{(x_j,y_k,z_l),(x_{j\pm1},y_k,z_l),(x_j,y_{k\pm1},z_l),(x_j,y_k,z_{l\pm1})\}
\end{align}

Now we are ready to define point sets that will be used in the proposed hybrid finite-volume-finite-difference approximation combined with DPM. 
\begin{definition}\label{def:point_sets} 
Introduce following point sets:\\
\begin{itemize}
    \item $M^0= \left\{(x_j,y_k,z_l)\mid(x_j,y_k,z_l)\in\Omega^0\right\}$ denotes the set of all the cell centers $(x_j,y_k,z_l)$ that belong to the interior of the auxiliary domain $\Omega^0$;
    \item $M^+=M^0\cap\Omega=\left\{(x_j,y_k,z_l)\mid(x_j,y_k,z_l)\in\Omega\right\}$ denotes the set of all the cell centers $(x_j,y_k,z_l)$ that belong to the interior of the original domain $\Omega$ (see Fig.~\ref{fig:SD_2D_Mp_Np});
    \item
    $M^-=M^0\backslash M^+=\{(x_j,y_k,z_l)\mid(x_j,y_k,z_l)\in\Omega^0\backslash\Omega\}$ is the set of all the cell centers $(x_j,y_k,z_l)$ that are inside of the auxiliary domain $\Omega^0$,  but belong to the exterior of the original domain $\Omega$;
    \item $N^+=\left\{\bigcup_{j,k,l}\mathcal{N}_{j,k,l}^{7}\mid(x_j,y_k,z_l)\in M^+\right\}$;
    \item $N^-=\left\{\bigcup_{j,k,l}\mathcal{N}_{j,k,l}^{7}\mid(x_j,y_k,z_l)\in M^-\right\}$;
    \item $N^0=\left\{\bigcup_{j,k,l}\mathcal{N}_{j,k,l}^{7}\mid(x_j,y_k,z_l)\in M^0\right\}$;\\ The point sets $N^\pm$ and $N^0$ are the sets of cell centers covered by the stencil $\mathcal{N}^7_{j,k,l}$ for every cell center $(x_j,y_k,z_l)$ in $M^\pm$ and $M^0$ respectively (see Fig.~\ref{fig:SD_2D_Mp_Np});
    \item $\gamma=N^+\cap N^-$ defines a thin layer of cell centers that straddles the continuous boundary $\Gamma$ and is called the discrete grid boundary (see Fig.~\ref{fig:SD_2D_gamma});
    \item $\gamma_{in}=M^+\cap\gamma$ and $\gamma_{ex}=M^-\cap\gamma$ are subsets of the discrete grid boundary that lie inside and outside of the spherical domain $\Omega$ respectively (see Fig.~\ref{fig:SD_2D_gamma}).
\end{itemize}
\end{definition}

\begin{figure}
    \centering
    \begin{subfigure}{0.4\textwidth}
        \centering
        \includegraphics[width=\textwidth]{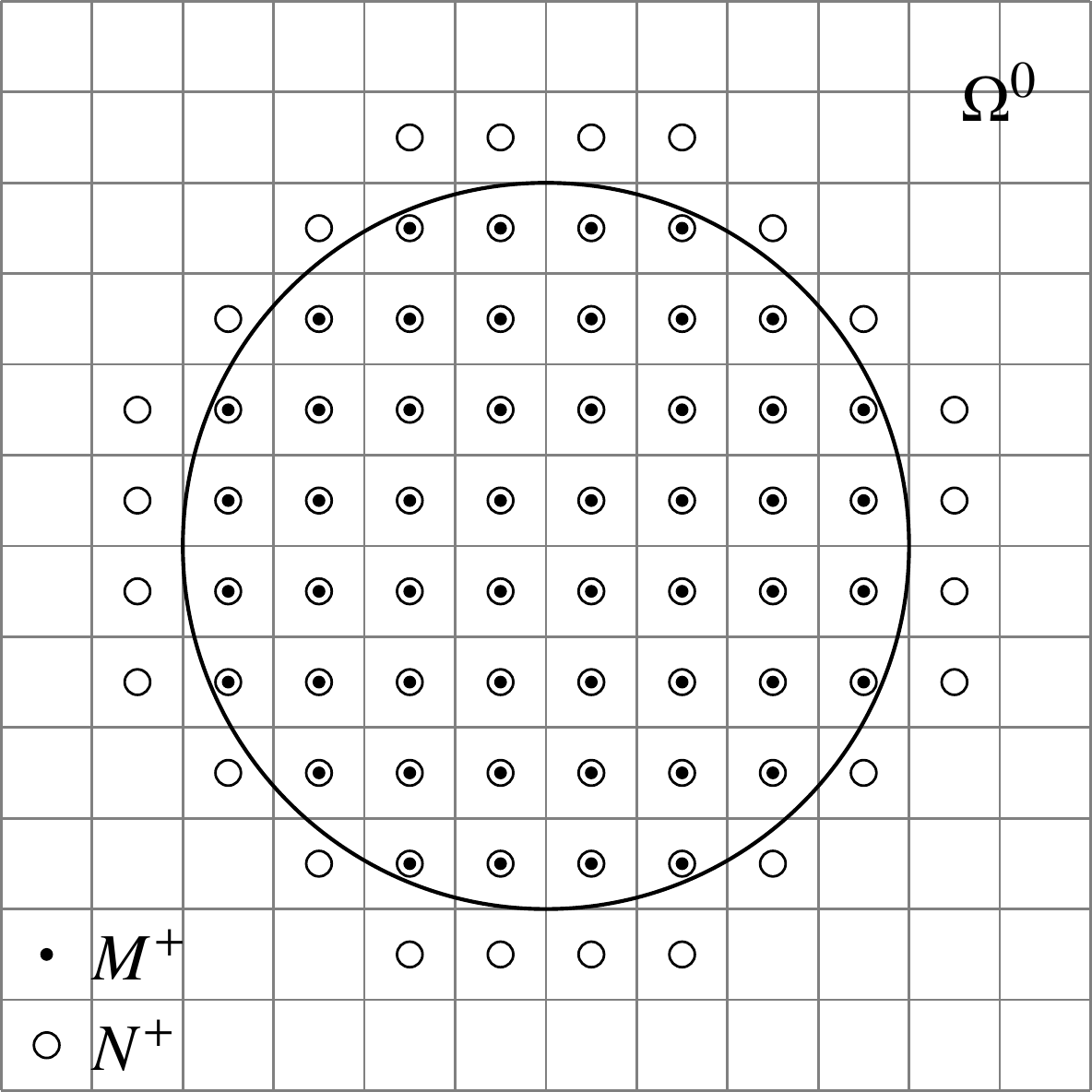}
        \caption{}
        \label{fig:SD_2D_Mp_Np}
    \end{subfigure}
    ~
    \begin{subfigure}{0.4\textwidth}
        \centering
        \includegraphics[width=\textwidth]{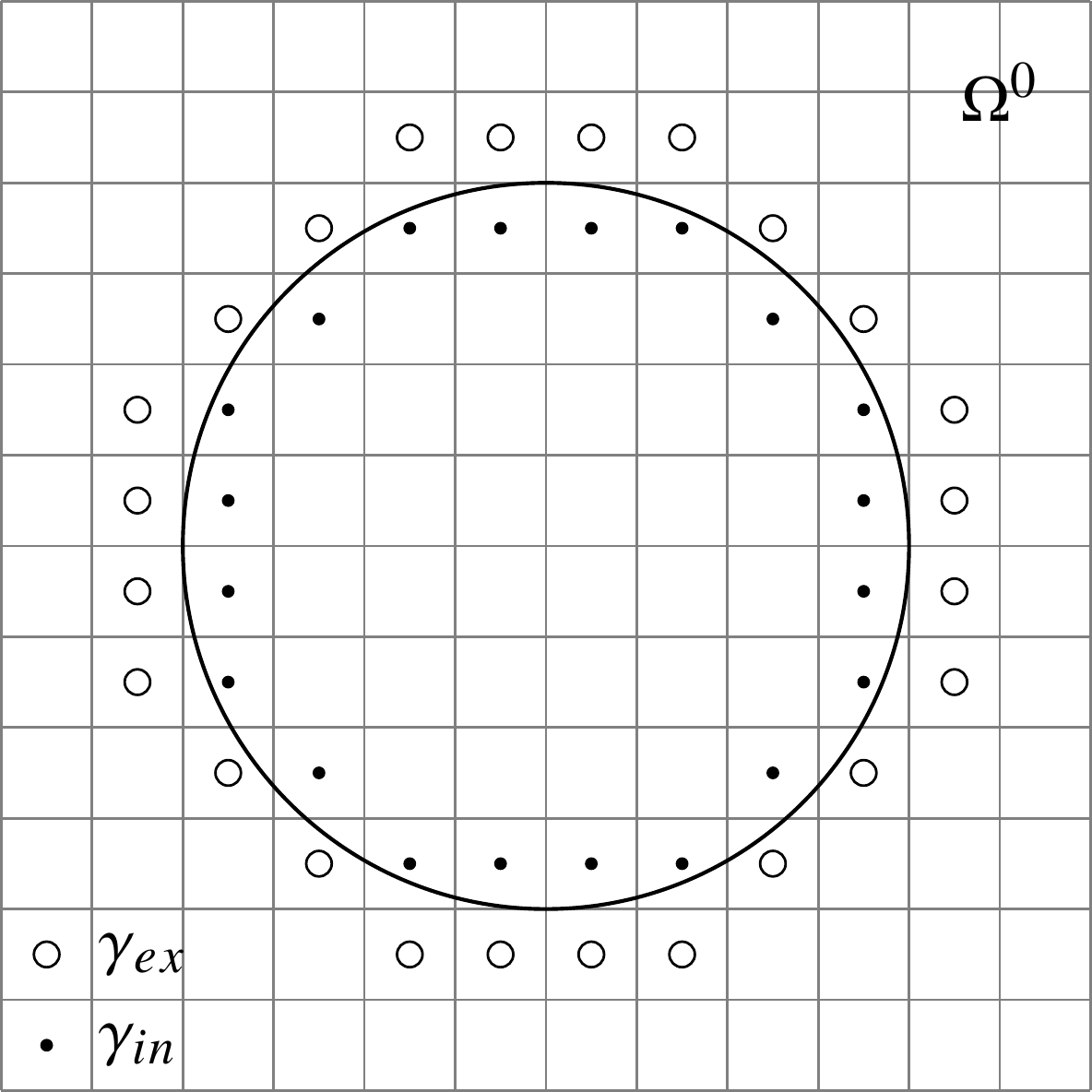}
        \caption{}
        \label{fig:SD_2D_gamma}
    \end{subfigure}
    \caption{Example in the cross-sectional view: (a) $M^+$ (solid dots) as a subset of $N^+$ (open circles), where solid dots in open circles show the overlap between $M^+$ and $N^+$; and (b) the discrete grid boundary $\gamma$ as the union of $\gamma_{ex}$ (open circles) and $\gamma_{in}$ (solid dots). The square auxiliary domain is denoted as $\Omega^0$ in both figures.}\label{fig:SD_2D_example}
\end{figure}

%%%%%%%%%%%%%%%%%%%%%%%%%%%%%%%%%%%%%%%%%%%%%%%%%%%%%%%%%%%%%%%%%%%%%%%%%%%%%%%%%%%%%%%%%%%%%%%%%%%%%%%%%%%%%%%%%%%%%%%%%%%%%%%%%%%%%%

\paragraph{Construction of the System of Discrete Equations for Model~\eqref{eqn:minimal_chemotaxis}.} In each cell $D_{j,k,l}$ of volume $|D_{j,k,l}|$, we denote the approximation of the cell average of the density, the approximation of point values of the density and the point values of the chemoattractant concentration at time $t$ as:
\begin{align}
    \bar{\rho}_{j,k,l} (t) &\approx \frac{1}{|D_{j,k,l}|}\int_{D_{j,k,l}}\rho(x,y,z,t)dxdydz, \mbox{ and } \rho _{j,k,l} (t) \approx \rho(x_j,y_k,z_l,t),\\
    c _{j,k,l} (t) &\approx c(x_j,y_k,z_l,t).
\end{align}
We will also denote by $\bar{\rho}^{i}_{j,k,l}$, ${\rho}^i_{j,k,l}$ and $c^i _{j,k,l}$ the computed cell average of the density, point value of the density and the point value of the chemoattractant concentration at cell center $(x_j,y_k,z_l)$ at the discrete time level $t^i$, respectively.

Next, we use a hybrid finite-volume-finite-difference scheme in space, similar to 2D algorithms in \cite{Chertock_2017,Epsh1} and first order implicit-explicit (IMEX) scheme in time as the underlying discretization for the Patlak-Keller-Segel model \eqref{eqn:minimal_chemotaxis} at time $t^{i+1}$ for every $(x_j,y_k,z_l)$ in $M^+$:
\begin{equation}\label{eqn:discrete_chemotaxis}
    \left\{
    \begin{aligned}
    (I-\Delta t\Delta_h)\bar{\rho}^{i+1}_{j,k,l}&= \bar{\rho}^{i}_{j,k,l}-\Delta tg^i_{j,k,l}\\
    (I-\Delta t\Delta_h)c^{i+1}_{j,k,l} &= (1-\Delta t)c^{i}_{j,k,l}+\Delta t\bar{\rho}^i_{j,k,l}
    \end{aligned} \right.
    \quad (x_j,y_k,z_l)\in M^+,
\end{equation}
where $\Delta_h$ is the discrete Laplace operator obtained using standard second-order centered finite difference approximation, $I$ denotes the identity matrix of the same size with $\Delta_h$ and $\Delta t:=t^{i+1}-t^{i}$. Here, in the right hand side of the density equation, $g^i_{j,k,l}$ denotes the discretization of the convection term $\nabla\!\cdot\!(\chi\rho\nabla c)$ in \eqref{eqn:minimal_chemotaxis} on $M^+$ and is evaluated at previous time level $t^{i}$ as:
\begin{align}\label{eqn:discrete_convective_term}
g^{i}_{j,k,l} =&\quad \frac{\chi\rho^i_{j+\frac{1}{2},k,l}\nabla^h_x c^i_{j+\frac{1}{2},k,l}-\chi\rho^i_{j-\frac{1}{2},k,l}\nabla^h_x c^i_{j-\frac{1}{2},k,l}}{h}\nonumber\\
&+ \frac{\chi\rho^i_{j,k+\frac{1}{2},l}\nabla^h_y c^i_{j,k+\frac{1}{2},l}-\chi\rho^i_{j,k-\frac{1}{2},l}\nabla^h_y c^i_{j,k-\frac{1}{2},l}}{h}\quad\\
&+\frac{\chi\rho^i_{j,k,l+\frac{1}{2}}\nabla^h_z c^i_{j,k,l+\frac{1}{2}}-\chi\rho^i_{j,k,l-\frac{1}{2}}\nabla^h_z c^i_{j,k,l-\frac{1}{2}}}{h}\nonumber
\end{align}
where $\nabla^h_x c^i_{j\pm1/2,k,l}$, $\nabla^h_y c^i_{j,k\pm1/2,l}$, $\nabla^h_z c^i_{j,k,l\pm1/2}$ are components of the discrete gradient of concentration $c$ at the center of the six faces in cell $D_{j,k,l}$. The component $\nabla^h_x c^i_{j+\frac{1}{2},k,l}$ is computed using the central finite difference:
\begin{align}\label{eqn:c-central-difference}
    \nabla^h_x c^i_{j+\frac{1}{2},k,l} = \frac{c^i_{j+1,k,l}-c^i_{j,k,l}}{h}, \quad (x_j,y_k,z_l)\in M^+.
\end{align} 
The other components $\nabla^h_x c^i_{j-\frac{1}{2},k,l}$, $\nabla^h_y c^i_{j,k\pm1/2,l}$, $\nabla^h_z c^i_{j,k,l\pm1/2}$ in the discrete gradient of $c$ are computed similarly as in \eqref{eqn:c-central-difference}. 

In addition, $\rho^i_{j\pm1/2,k,l}$, $\rho^i_{j,k\pm1/2,l}$, $\rho^i_{j,k,l\pm1/2}$ are the approximation of density values at the center of the six faces of the same cell $D_{j,k,l}$, which are evaluated in an upwind manner. For example, the component in $x$-direction $\rho^i_{j+1/2,k,l}$ is computed using the following piecewise linear construction:
\begin{equation}\label{eqn:upwind_approximation}
\begin{aligned}
    \rho^i_{j+\frac{1}{2},k,l}=\left\{
    \begin{array}{ll}
    \tilde{\rho}^i(x_{j+\frac{1}{2}}-0,y_k,z_l), & \quad\nabla^h_x c^i_{j+\frac{1}{2},k,l}>0,\\
    \tilde{\rho}^i(x_{j+\frac{1}{2}}+0,y_k,z_l), & \quad\mbox{otherwise},\\
    \end{array}
    \right.
\end{aligned}
\end{equation}
where
\begin{align}\label{eqn:truncated_taylor_expansion}
    \tilde{\rho}^i(x,y,z) = \bar{\rho}^i_{j,k,l}+\nabla^h \bar{\rho}^i_{j,k,l}\cdot\langle x-x_j,y-y_k,z-z_l\rangle,\quad (x,y,z)\in D_{j,k,l},
\end{align}
and, thus in (\ref{eqn:upwind_approximation}), 
\begin{align*}
\tilde{\rho}^i(x_{j+\frac{1}{2}}-0,y_k,z_l)=\bar{\rho}^i_{j,k,l}+\frac{h}{2}\nabla^h_x \bar{\rho}^i_{j,k,l}, \mbox{ and }  \tilde{\rho}^i(x_{j+\frac{1}{2}}+0,y_k,z_l)=\bar{\rho}^i_{j+1,k,l}-\frac{h}{2}\nabla^h_x \bar{\rho}^i_{j+1,k,l}.
\end{align*}
In the second order piecewise linear construction~\eqref{eqn:truncated_taylor_expansion}, $\langle x-x_j,y-y_k,z-z_l\rangle$ denotes the vector defined by two points: $(x,y,z)\in D_{j,k,l}$ and $(x_j,y_k,z_l)\in D_{j,k,l}$, $\nabla^h\bar{\rho}^i_{j,k,l}$ is the discrete gradient of $\bar{\rho}^i$ at cell center $(x_j,y_k,z_l)$ and time $t^i$, and $\nabla^h_x \bar{\rho}^i_{j,k,l}$ is its $x$-component. Each component of the gradient $\nabla^h\bar{\rho}^i_{j,k,l}$ is calculated using the minmod slope limiter. For example, the $x$-component $\nabla^h_x \bar{\rho}^i_{j,k,l}$ is computed as
\begin{align*}
    \nabla^h_x \bar{\rho}^i_{j,k,l} & = \mbox{minmod}\left(2\frac{\bar{\rho}^i_{j+1,k,l}-\bar{\rho}^i_{j,k,l}}{h},\frac{\bar{\rho}^i_{j+1,k,l}-\bar{\rho}^i_{j-1,k,l}}{2h},2\frac{\bar{\rho}^i_{j,k,l}-\bar{\rho}^i_{j-1,k,l}}{h}\right)
\end{align*}
where the minmod function is defined by
\begin{align}\label{eqn:slope_limiter}
    \mbox{minmod}(x_1,x_2,\dots,x_{\mathcal{M}})=\left\{
    \begin{array}{cl}
    \min_j\{x_j\}, & \quad \mbox{ if }x_j>0, \forall j = 1,2,\dots,\mathcal{M},\\
    \max_j\{x_j\}, & \quad \mbox{ if }x_j<0, \forall j = 1,2,\dots,\mathcal{M},\\
    0, & \quad\mbox{ otherwise}.
    \end{array}
    \right.
\end{align}
The density values $\rho^i_{j-\frac{1}{2},k,l}$, $\rho^i_{j,k\pm1/2,l},\rho^i_{j,k,l\pm1/2}$ at the center of the other faces in cell $D_{j,k,l}$ are computed similarly as in \eqref{eqn:upwind_approximation}.
\begin{remark}
The non-negativity of the reconstructed point values of $\rho$ is ensured by the positivity-preserving
generalized minmod limiter, \cite{MR1486274,MR1976211,MR1047564,MR760628} with minmod function as defined in (\ref{eqn:slope_limiter}), under the assumption that the cell averages of the density are nonnegative.
%A variety of  nonlinear limiters that would ensure a non-oscillatory reconstruction in (\ref{eqn:truncated_taylor_expansion}) is available (see, for example \cite{}), to name a few examples. However, it is essential that the resulting reconstruction of the cell density is positivity preserving. In our work, we employed the positivity preserving minmod reconstruction with minmod function as defined in (\ref{eqn:slope_limiter})}. 
\end{remark}

%%%%%%%%%%%%%%%%%%%%%%%%%%%%%%%%%%%%%%%%%%%%%%%%%%%%%%%%%%%%%%%%%%%%%%%%%%%%%%%%%%%%%%%%%%%%%%%%%%%%%%%%%%%%%%%%%%%%%%%%%%%%%%%%%%%%%%

\paragraph{The Discrete Auxiliary Problem (AP).} One of the important steps of DPM-based methods is the introduction of the auxiliary problem (AP). For brevity, we will denote $u^{i+1} :=\bar{\rho}^{i+1}$ or $u^{i+1} :=c^{i+1}$. Thus,  the two difference equations in \eqref{eqn:discrete_chemotaxis}, which are decoupled after discretization of model~\eqref{eqn:minimal_chemotaxis} with IMEX, can be cast into a compact form, i.e.
\begin{align}\label{eqn:shorter_discrete_chemotaxis}
    L_{h, \Delta t} u_{j, k, l}^{i+1}=f_{j, k, l}^{i},\quad(x_j,y_k,z_l)\in M^+,
\end{align}
where $L_{h, \Delta t}:=(I-\Delta t\Delta_h)$ and $f_{j, k, l}^i$ is the right hand side function which is either $f_{j, k, l}^i=\bar{\rho}^{i}_{j,k,l}-\Delta tg^i_{j,k,l}$ (for the density equation) or $f_{j, k, l}^i=(1-\Delta t)c^{i}_{j,k,l}+\Delta t\bar{\rho}^i_{j,k,l}$ (for the chemoattractant concentration equation), as in \eqref{eqn:discrete_chemotaxis}.

Next, we define the discrete Auxiliary Problem, which will play a key role in construction of the \textit{Particular Solution} and the
\textit{Difference Potentials} as a part of DPM-based algorithm proposed in this work.
\begin{definition}
At time $t^{i+1}$, given the grid function $q^{i}$ on $M^0$, the following difference equations~\eqref{eqn:discrete_ap}--\eqref{eqn:discrete_ap_boundary} are defined as the discrete Auxiliary Problem (AP):
\begin{align}
L_{h,\Delta t}v_{j,k,l}^{i+1}&=q^{i}_{j,k,l},\quad(x_j,y_k,z_l)\in M^0,\label{eqn:discrete_ap}\\
v^{i+1}_{j,k,l}&=0,\quad(x_j,y_k,z_l)\in N^0\backslash M^0.\label{eqn:discrete_ap_boundary}
\end{align}
\end{definition}
Here $L_{h,\Delta t}$ is the linear operator similar to the one  in \eqref{eqn:shorter_discrete_chemotaxis}, but is defined now on a larger point set $M^0$.

\begin{remark}
The homogeneous Dirichlet boundary condition~\eqref{eqn:discrete_ap_boundary} in the AP is chosen merely for efficiency of our algorithm, i.e. we employ Fast Poisson Solvers to solve the AP. In general, other boundary conditions can be selected for the AP as long as the defined AP is well-posed and can be solved computationally efficiently.
\end{remark}

%%%%%%%%%%%%%%%%%%%%%%%%%%%%%%%%%%%%%%%%%%%%%%%%%%%%%%%%%%%%%%%%%%%%%%%%%%%%%%%%%%%%%%%%%%%%%%%%%%%%%%%%%%%%%%%%%%%%%%%%%%%

\paragraph{Construction of the Particular Solution.} Let us denote by
$G_{h,\Delta t}f_{j,k,l}^{i},\;(x_j,y_k,z_l)\in N^+$ the Particular Solution which is defined on $N^+$ of the fully discrete problem~\eqref{eqn:shorter_discrete_chemotaxis} at time level $t^{i+1}$. The Particular Solution is obtained by solving the AP \eqref{eqn:discrete_ap}--\eqref{eqn:discrete_ap_boundary} with the following right hand side:
\begin{align}\label{rhs:particular_solution}
q^i_{j,k,l}=\left\{
\begin{array}{ll}
f_{j,k,l}^{i},&\quad(x_j,y_k,z_l)\in M^+,\\
0,& \quad(x_j,y_k,z_l)\in M^-,
\end{array}
\right.
\end{align}
and by restricting the computed solution from $N^0$ to $N^+$.
\begin{remark}
Note that, if the center $(x_j,y_k,z_l)$ of cell $D_{j,k,l}$ belongs to $\gamma_{in}$, some points $(x_{j\pm1},y_k,z_l)$, $(x_j,y_{k\pm1},z_l)$ or $(x_j,y_k,z_{l\pm1})$ from stencil $\mathcal{N}_{j,k,l}^7$ with the center point $(x_j,y_k,z_l)$ will lie in $\gamma_{ex}$, and hence, outside of the domain $\Omega$. To construct the Particular Solution for the density approximation in \eqref{eqn:discrete_chemotaxis}-\eqref{eqn:discrete_convective_term} and \eqref{eqn:discrete_ap}-\eqref{rhs:particular_solution}, one needs to compute the discretized convective term $g^{i}_{j,k,l}$ for every point $(x_j,y_k,z_l)\in M^+$. Hence, we need to approximate values of $\rho$ and $c$ at the points that belong to set $\gamma_{ex}$. We will take the following strategy to define values of $\rho$ and $c$ on $\gamma_{ex}$ and $\gamma_{in}$:
\begin{itemize}
    \item Initially, we approximate values $\rho^0_{j,k,l}$ and $c^0_{j,k,l}$ for $(x_j,y_k,z_l)\in \gamma$ using 2-term extension operator~\eqref{eqn:extension_operator} and zero Neumann boundary conditions, i.e.
    \begin{align}\label{eqn:boundary_approx}
    \rho^0_{j,k,l}\approx \rho_0(x'_j,y'_k,z'_l)\mbox{ and }c^0_{j,k,l}\approx c_0(x'_j,y'_k,z'_l).
    \end{align}
    Here, $(x'_j,y'_k,z'_l)$ is the orthogonal projection on the continuous boundary $\Gamma$ corresponding to a cell center $(x_j,y_k,z_l)\in\gamma$, and $\rho_0$ and $c_0$ are the initial conditions.
    \item At later time level $t^{i+1}$, $g^i_{j,k,l}$ is computed using the solution $\bar{\rho}^{i}_{j,k,l}$ and $c^{i}_{j,k,l}$ obtained from the discrete generalized Green's formula~\eqref{eqn:generalized_greens_formula} at previous time level $t^i$.
\end{itemize}
\end{remark}

%%%%%%%%%%%%%%%%%%%%%%%%%%%%%%%%%%%%%%%%%%%%%%%%%%%%%%%%%%%%%%%%%%%%%%%%%%%%%%%%%%%%%%%%%%%%%%%%%%%%%%%%%%%%%%%%%%%

\paragraph{Construction of the Difference Potentials.} To construct the Difference Potentials, let us first define a linear space $V_{\gamma}$ of all grid functions $v^{i+1}_{\gamma} (x_j,y_k, z_l)$ at $t^{i+1}$ on $\gamma$. The functions are extended by zero to other points in $N^0$ set. These grid functions $v^{i+1}_{\gamma}$ are  called densities on the discrete grid boundary $\gamma$ at the time level $t^{i+1}$.
\begin{definition}\label{DP}
The Difference Potential associated with a given density $v^{i+1}_{\gamma}\in V_{\gamma}$ is the grid function $P_{N^+\gamma}v_{\gamma}^{i+1}$ defined on $N^+$ at the time level $t^{i+1}$, and is obtained by solving the AP~\eqref{eqn:discrete_ap}--\eqref{eqn:discrete_ap_boundary} with the following right hand side:
\begin{align}
q^i_{j,k,l}&=\left\{
\begin{array}{ll}
0,&\quad(x_j,y_k,z_l)\in M^+,\\
L_{h,\Delta t}[v^{i+1}_{\gamma}],&\quad(x_j,y_k,z_l)\in M^-,
\end{array}
\right.\label{rhs:difference_potentials}
\end{align}
\end{definition}
and by restricting the solution from $N^0$ to $N^+$. In Definition~\ref{DP}, $P_{N^+ \gamma}$ denotes the operator which constructs Difference Potential $P_{N^+\gamma}v_{\gamma}^{i+1}$ from the density $v^{i+1}_{\gamma}$ at time $t^{i+1}$. Note that Difference Potential is a linear operator of the density function, $P_{N^+\gamma}v_{\gamma}^{i+1}|_m=\sum_{\mathfrak{n}\in \gamma} A_{\mathfrak{n}m} v^{i+1}_\mathfrak{n}$, where $m\equiv (j, k, l)$ is the index of the grid point in the set $N^+$, $\mathfrak{n}$ is the index of the grid point in the set $\gamma$, $P_{N^+\gamma}v_{\gamma}^{i+1}|_m$ is the value of the Difference Potential at the grid point with index $m$ and $A_{\mathfrak{n}m}$ are the coefficients of the Difference Potential.

Next we will introduce the trace operator.  Given a grid function $v^{i+1}$ defined on the point set $N^+$, we denote by $Tr_{\gamma}v^{i+1}$ the trace or restriction of $v^{i+1}$ from $N^+$ to the discrete grid boundary $\gamma$. Similarly, we define $Tr_{\gamma_{in}}v^{i+1}$ as the trace or restriction of $v^{i+1}$ from $N^+$ to  $\gamma_{in}\subset \gamma$.  We are ready to define an operator $P_{\gamma}:V_{\gamma}\rightarrow V_{\gamma}$ such that $P_{\gamma}v^{i+1}_{\gamma}:=Tr_{\gamma}P_{N^+\gamma}v^{i+1}_{\gamma}$. The operator $P_{\gamma}$ is a projection operator. Now we will state the key theorem in Difference Potentials Method, which allows us to reformulate the difference equation \eqref{eqn:shorter_discrete_chemotaxis} defined in $M^+$ into an equivalent {\it Boundary Equation with Projections} (BEP) defined on the discrete grid boundary $\gamma$ only (see \cite{Ryab} for more details).
\begin{theorem}[Boundary Equations with Projections (BEP)]\label{thm:full_BEP}
At time $t^{i+1}$, the discrete density $u^{i+1}_{\gamma}$ is the trace of some solution $u^{i+1}$ on $N^+$ to the difference equation~\eqref{eqn:shorter_discrete_chemotaxis}, i.e. $u^{i+1}_{\gamma}:=Tr_{\gamma}u^{i+1}$, if and only if the following BEP holds:
\begin{align}\label{eqn:full_BEP}
    u^{i+1}_{\gamma}-P_{\gamma}u^{i+1}_{\gamma}=G_{h,\Delta t}f^{i}_{\gamma}, \quad  (x_j, y_k, z_l)\in \gamma,
\end{align}
where $G_{h,\Delta t}f^{i}_{\gamma}:=Tr_{\gamma}G_{h,\Delta t}f^{i}_{j, k, l}$ is the trace of the Particular Solution restricted to the discrete grid boundary $\gamma$.
\end{theorem}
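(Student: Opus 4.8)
The plan is to prove the equivalence by a direct, two-directional argument that relies on the structure of the Auxiliary Problem and the linearity of the Difference Potentials operator, following the classical Calderon--Ryaben'kii framework adapted to our discrete setting. The central bookkeeping observation is that for any grid function $w^{i+1}$ defined on $N^0$, applying $L_{h,\Delta t}$ on $M^0$ and decomposing $M^0 = M^+ \cup M^-$ produces a right-hand side that, when fed back into the AP \eqref{eqn:discrete_ap}--\eqref{eqn:discrete_ap_boundary}, reproduces $w^{i+1}$ on $N^0$ by uniqueness of the AP solution (the AP is well-posed by construction, since $L_{h,\Delta t}=I-\Delta t\Delta_h$ with homogeneous Dirichlet data on $N^0\setminus M^0$ is invertible). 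This ``reproduction identity'' is the workhorse for both implications.

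First I would prove the forward direction ($\Rightarrow$). Suppose $u^{i+1}$ is a solution on $N^+$ of \eqref{eqn:shorter_discrete_chemotaxis}, i.e. $L_{h,\Delta t}u^{i+1}_{j,k,l}=f^i_{j,k,l}$ for every $(x_j,y_k,z_l)\in M^+$, and set $u^{i+1}_\gamma := Tr_\gamma u^{i+1}$. Extend $u^{i+1}$ by zero off $N^+$ to get a grid function on $N^0$, and apply $L_{h,\Delta t}$ on all of $M^0$; split the resulting grid function into its restriction to $M^+$, which equals $f^i$ by hypothesis, and its restriction to $M^-$, which depends only on the values of $u^{i+1}$ at points of the stencils $\mathcal{N}^7_{j,k,l}$ for $(x_j,y_k,z_l)\in M^-$ — but those stencil points that lie in $N^+$ are exactly the points of $\gamma = N^+\cap N^-$, so the $M^-$ part equals $L_{h,\Delta t}[u^{i+1}_\gamma]$ in the sense of \eqref{rhs:difference_potentials}. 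By the reproduction identity, solving the AP with this combined right-hand side returns $u^{i+1}$ on $N^0$; by linearity of the AP solution map in its right-hand side, this splits as $u^{i+1} = G_{h,\Delta t}f^i + P_{N^+\gamma}u^{i+1}_\gamma$ on $N^+$ (after restriction), where the first term is the Particular Solution by \eqref{rhs:particular_solution} and the second is the Difference Potential by Definition~\ref{DP}. Taking $Tr_\gamma$ of both sides and using $P_\gamma := Tr_\gamma P_{N^+\gamma}$ yields $u^{i+1}_\gamma - P_\gamma u^{i+1}_\gamma = Tr_\gamma G_{h,\Delta t}f^i = G_{h,\Delta t}f^i_\gamma$, which is \eqref{eqn:full_BEP}.

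For the converse ($\Leftarrow$), suppose $u^{i+1}_\gamma\in V_\gamma$ satisfies the BEP \eqref{eqn:full_BEP}. Define the candidate solution $u^{i+1} := G_{h,\Delta t}f^i + P_{N^+\gamma}u^{i+1}_\gamma$ on $N^+$. I would check two things: that $Tr_\gamma u^{i+1} = u^{i+1}_\gamma$, which is immediate from \eqref{eqn:full_BEP} after applying $Tr_\gamma$ and using the definition of $P_\gamma$; and that $L_{h,\Delta t}u^{i+1}_{j,k,l}=f^i_{j,k,l}$ for $(x_j,y_k,z_l)\in M^+$. The latter follows because, by construction of the particular solution and the difference potential via the AP, $L_{h,\Delta t}u^{i+1} = f^i + 0 = f^i$ on $M^+$ — the particular solution contributes $f^i$ on $M^+$ by \eqref{rhs:particular_solution} and the difference potential contributes $0$ on $M^+$ by \eqref{rhs:difference_potentials} — provided one is careful that evaluating $L_{h,\Delta t}u^{i+1}$ at a point of $\gamma_{in}$ uses only values of $u^{i+1}$ at stencil points in $N^+$, which it does since $\mathcal{N}^7_{j,k,l}\subset N^+$ whenever $(x_j,y_k,z_l)\in M^+$ by definition of $N^+$.

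The main obstacle — really the only subtle point — is the precise interface bookkeeping at the discrete grid boundary: one must verify carefully that (i) the clipping of $u^{i+1}$ to $N^+$ and extension by zero does not corrupt the computation of $L_{h,\Delta t}u^{i+1}$ at points of $M^+$ (it does not, because all stencil neighbors of $M^+$-points lie in $N^+$), and (ii) the ``leakage'' of $u^{i+1}$ into $M^-$ through $L_{h,\Delta t}$ is captured \emph{exactly} by the $\gamma$-layer, i.e. that $\gamma = N^+\cap N^-$ is wide enough that no information from $u^{i+1}$ beyond $\gamma$ reaches any $M^-$-point through a single application of the $7$-point operator — this is exactly the defining property of $\gamma$ in Definition~\ref{def:point_sets}. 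Once these two facts are isolated, both directions reduce to linearity of the AP solution operator and uniqueness for the (well-posed) AP, so no genuine estimates are needed; the proof is essentially a verification. I would present it compactly, citing \cite{Ryab} for the general principle and emphasizing only the two interface checks above.
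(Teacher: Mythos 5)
Your proposal is correct and follows essentially the same route as the paper's proof in Appendix~\ref{apd:proof-thm-full-BEP}: both directions rest on forming the superposition $G_{h,\Delta t}f^{i}+P_{N^+\gamma}u^{i+1}_{\gamma}$ via the Auxiliary Problem and invoking linearity together with uniqueness of the AP solution (your ``reproduction identity'' is exactly the paper's uniqueness argument applied to the zero-extension of $u^{i+1}$ as a solution of the system \eqref{eqn:superposition_part_dp}). The additional interface bookkeeping you spell out --- that the $M^-$-portion of $L_{h,\Delta t}$ applied to the zero-extension is determined by the values on $\gamma=N^+\cap N^-$ alone, and that stencils centered in $M^+$ stay inside $N^+$ --- is left implicit in the paper but is correctly identified and verified in your write-up.
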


\begin{proof}
See Appendix~\ref{apd:proof-thm-full-BEP}.
\end{proof}

\begin{remark}
Note, using that Difference Potential is a linear operator, we can recast (\ref{eqn:full_BEP}) as
\begin{align}\label{eqn:full_BEP1}
    u^{i+1}_{m}-\sum_{\mathfrak{n}\in \gamma}A_{\mathfrak{n}m} u^{i+1}_{\mathfrak{n}}=G_{h,\Delta t}f^{i}_m, \quad m\in \gamma,
\end{align}
where $m$ is the index of the grid point in the set $\gamma$ and $G_{h,\Delta t}f^{i}_m$ is the value of the Particular Solution at the grid point with index $m$ in the set $\gamma$.
\end{remark}

\begin{proposition}\label{prop:rank}
The rank of linear equations in BEP~\eqref{eqn:full_BEP} is $|\gamma_{in}|$, which is the cardinality of the point set $\gamma_{in}$.
\end{proposition}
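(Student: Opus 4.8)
The plan is to compute the rank of the $|\gamma|\times|\gamma|$ coefficient operator $I-P_\gamma$ of the linear system \eqref{eqn:full_BEP} by determining $\dim\ker(I-P_\gamma)$ and invoking rank--nullity. I work under the standing assumption that $\Omega$ is compactly contained in the auxiliary cube $\Omega^0$, so that $N^+\subset M^0$. From Definition~\ref{def:point_sets} one then checks the set identity $N^+=M^+\cup\gamma_{ex}$ with the union disjoint: a point $p\in N^+\setminus M^+$ lies in $M^-$, and since $p$ belongs to its own $7$-point stencil it also lies in $N^-$, hence in $\gamma_{ex}$; conversely $\gamma_{ex}\subset\gamma\subset N^+$ and $\gamma_{ex}\cap M^+=\emptyset$. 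In particular $|N^+|=|M^+|+|\gamma_{ex}|$ and $|\gamma|=|\gamma_{in}|+|\gamma_{ex}|$.

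First I would characterize the kernel of $I-P_\gamma$. A density $v_\gamma\in V_\gamma$ lies in $\ker(I-P_\gamma)$ iff $v_\gamma=P_\gamma v_\gamma$; by Theorem~\ref{thm:full_BEP} applied with right-hand side $f^i\equiv 0$ (for which the Particular Solution vanishes, $G_{h,\Delta t}0=0$, since the AP is uniquely solvable), this holds iff $v_\gamma=Tr_\gamma v$ for some grid function $v$ on $N^+$ solving the homogeneous equation $L_{h,\Delta t}v_{j,k,l}=0$ for all $(x_j,y_k,z_l)\in M^+$. Hence $\ker(I-P_\gamma)=Tr_\gamma(\mathcal H)$, where $\mathcal H:=\{\,v:N^+\to\mathbb R\mid (L_{h,\Delta t}v)_{j,k,l}=0\ \ \forall\,(x_j,y_k,z_l)\in M^+\,\}$.

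Next I would compute $\dim\mathcal H$ and show that $Tr_\gamma$ is injective on $\mathcal H$. Regard $L_{h,\Delta t}$ as a linear map $\mathbb R^{N^+}\to\mathbb R^{M^+}$; splitting its columns according to $N^+=M^+\cup\gamma_{ex}$, the square $M^+\times M^+$ block $B$ has diagonal entries $1+6\Delta t/h^2$ and at most six off-diagonal entries equal to $-\Delta t/h^2$ per row, so $B$ is strictly diagonally dominant and hence invertible for every $\Delta t,h>0$. Having an invertible square submatrix of full row size, the map $L_{h,\Delta t}:\mathbb R^{N^+}\to\mathbb R^{M^+}$ is surjective, so $\dim\mathcal H=|N^+|-|M^+|=|\gamma_{ex}|$. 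Moreover, if $v\in\mathcal H$ and $Tr_\gamma v=0$, then $v$ vanishes on $\gamma_{ex}=N^+\setminus M^+$, so the homogeneous system reduces to $B\,v|_{M^+}=0$, giving $v|_{M^+}=0$ by invertibility of $B$; thus $Tr_\gamma|_{\mathcal H}$ is injective.

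Combining these, $\dim\ker(I-P_\gamma)=\dim Tr_\gamma(\mathcal H)=\dim\mathcal H=|\gamma_{ex}|$, and rank--nullity for the $|\gamma|\times|\gamma|$ operator $I-P_\gamma$ yields $\operatorname{rank}(I-P_\gamma)=|\gamma|-|\gamma_{ex}|=|\gamma_{in}|$, which is precisely the claimed rank of the BEP \eqref{eqn:full_BEP}. The main obstacle is the careful handling of the two places where invertibility of the interior block $B$ of $L_{h,\Delta t}$ is needed --- once to get surjectivity of $L_{h,\Delta t}$ on $N^+$ (hence $\dim\mathcal H=|\gamma_{ex}|$) and once for injectivity of the trace on $\mathcal H$ --- both of which reduce to the strict diagonal dominance of $I-\Delta t\Delta_h$; a secondary point needing care is the geometric identity $N^+=M^+\cup\gamma_{ex}$, where the assumption $\Omega\Subset\Omega^0$ enters.
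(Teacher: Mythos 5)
Correct, and essentially the same approach as the paper's own proof in Appendix~\ref{apd:proof-prop-rank}: both arguments identify the solution set of the BEP~\eqref{eqn:full_BEP} with traces of solutions of the difference scheme~\eqref{eqn:shorter_discrete_chemotaxis}, which are freely and uniquely parametrized by data on $\gamma_{ex}$, giving nullity $|\gamma_{ex}|$ and hence rank $|\gamma|-|\gamma_{ex}|=|\gamma_{in}|$. Your write-up merely makes explicit two points the paper asserts without proof --- the disjoint decomposition $N^+=M^+\cup\gamma_{ex}$ and the well-posedness of the scheme given $\gamma_{ex}$-data, which you justify via strict diagonal dominance of the $M^+\times M^+$ block of $I-\Delta t\Delta_h$.
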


\begin{proof}
See Appendix~\ref{apd:proof-prop-rank}.
\end{proof}

Next we introduce the reduced BEP \eqref{eqn:reduced_BEP} defined only on $\gamma_{in}$ and show that it is equivalent to the BEP~\eqref{eqn:full_BEP} defined on $\gamma$.
\begin{theorem}\label{thm:reduced_BEP}
The BEP~\eqref{eqn:full_BEP} defined on $\gamma$ in Theorem~\ref{thm:full_BEP} is equivalent to the following BEP~\eqref{eqn:reduced_BEP} defined on a smaller subset $\gamma_{in}\subset \gamma$:
\begin{align}\label{eqn:reduced_BEP}
    u^{i+1}_{\gamma_{in}}-Tr_{\gamma_{in}}P_{\gamma}u_{\gamma}^{i+1}=Tr_{\gamma_{in}}G_{h,\Delta t}f^{i}_{\gamma}, \quad (x_j, y_k, z_l)\in \gamma_{in}
\end{align}
Moreover, the reduced BEP~\eqref{eqn:reduced_BEP} contains only linearly independent equations.
\end{theorem}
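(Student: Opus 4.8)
The plan is to establish the equivalence between the full BEP \eqref{eqn:full_BEP} on $\gamma$ and the reduced BEP \eqref{eqn:reduced_BEP} on $\gamma_{in}$ by showing that the restriction of the equations to $\gamma_{ex}$ carries no independent information, and then to deduce linear independence from the rank count already available in Proposition~\ref{prop:rank}. First I would observe that one direction is immediate: if $u^{i+1}_\gamma$ satisfies the full BEP on $\gamma$, then restricting \eqref{eqn:full_BEP} to the subset $\gamma_{in}$ using the trace operator $Tr_{\gamma_{in}}$ gives exactly \eqref{eqn:reduced_BEP}, since $Tr_{\gamma_{in}}(u^{i+1}_\gamma) = u^{i+1}_{\gamma_{in}}$ and $Tr_{\gamma_{in}}(P_\gamma u^{i+1}_\gamma)$, $Tr_{\gamma_{in}}(G_{h,\Delta t}f^i_\gamma)$ are the corresponding restrictions. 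So the full BEP implies the reduced BEP.

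The nontrivial direction is to show that the reduced BEP on $\gamma_{in}$ implies the full BEP on $\gamma$, i.e. that the equations indexed by $\gamma_{ex}$ are automatically satisfied whenever those indexed by $\gamma_{in}$ hold. The key structural fact I would use is that the Difference Potential $P_{N^+\gamma}v^{i+1}_\gamma$ is obtained by solving the AP with right-hand side supported on $M^-$, namely $q^i_{j,k,l} = L_{h,\Delta t}[v^{i+1}_\gamma]$ on $M^-$ and $0$ on $M^+$. Writing out $P_\gamma v^{i+1}_\gamma = Tr_\gamma P_{N^+\gamma} v^{i+1}_\gamma$ and evaluating the defining relation $u^{i+1}_\gamma - P_\gamma u^{i+1}_\gamma = G_{h,\Delta t}f^i_\gamma$ at a point of $\gamma_{ex}$, I expect the cancellation structure inherited from the proof of Theorem~\ref{thm:full_BEP} (in particular the way $G_{h,\Delta t}f^i$ and $P_{N^+\gamma}u^{i+1}_\gamma$ recombine via the generalized Green's formula) to force the $\gamma_{ex}$-equations to reduce to identities once the grid function $u^{i+1}$ is reconstructed from its $\gamma_{in}$-trace through the AP. Concretely, I would reconstruct a candidate solution $u^{i+1}$ on $N^+$ from $u^{i+1}_{\gamma_{in}}$ via the discrete generalized Green's formula, verify it solves \eqref{eqn:shorter_discrete_chemotaxis} on $M^+$, and then invoke Theorem~\ref{thm:full_BEP} in the ``only if'' direction to conclude that its full trace on $\gamma$ satisfies the full BEP — the point being that $Tr_\gamma u^{i+1}$ is determined on all of $\gamma$, not just $\gamma_{in}$, once $u^{i+1}$ is known on $N^+$.

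For the final assertion — that \eqref{eqn:reduced_BEP} contains only linearly independent equations — I would argue by a dimension count. Proposition~\ref{prop:rank} states that the rank of the full system \eqref{eqn:full_BEP} is exactly $|\gamma_{in}|$. The reduced system \eqref{eqn:reduced_BEP} consists of precisely $|\gamma_{in}|$ equations, and by the equivalence just established its solution set coincides with that of the full system; hence its rank is also $|\gamma_{in}|$, which equals its number of equations, so the equations are linearly independent.

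The main obstacle I anticipate is the nontrivial implication: making rigorous the claim that the $\gamma_{ex}$ rows of the full BEP are consequences of the $\gamma_{in}$ rows. This requires a careful examination of the support properties of $L_{h,\Delta t}[v^{i+1}_\gamma]$ relative to the partition $\gamma = \gamma_{in} \cup \gamma_{ex}$ and of how the $7$-point stencil couples $\gamma_{ex}$ points only to points already controlled by the AP and the $\gamma_{in}$-data; the cleanest route is probably to lean directly on the reconstruction-via-Green's-formula argument rather than manipulating the projection matrix entrywise, so that the redundancy of the $\gamma_{ex}$-equations follows from the well-posedness of the AP rather than from an explicit linear-algebra computation.
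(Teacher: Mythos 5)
Your easy direction (restricting the equations from $\gamma$ to $\gamma_{in}$) and your final rank-count argument for linear independence via Proposition~\ref{prop:rank} are both correct and coincide with the paper. The gap is in the hard direction. Set $w_\gamma := P_\gamma u^{i+1}_\gamma + G_{h,\Delta t}f^i_\gamma = Tr_\gamma\bigl(P_{N^+\gamma}u^{i+1}_\gamma + G_{h,\Delta t}f^i\bigr)$. Your reconstruction-via-Green's-formula step does show that $w_\gamma$ satisfies the full BEP~\eqref{eqn:full_BEP} (Theorem~\ref{thm:full_BEP} applied to the reconstructed $u^{i+1}$, which solves \eqref{eqn:shorter_discrete_chemotaxis} on $M^+$ by construction). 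But the statement to be proved is that the \emph{given} density $u^{i+1}_\gamma$ satisfies the full BEP, i.e.\ that $u^{i+1}_\gamma = w_\gamma$ on all of $\gamma$. The reduced BEP hands you this equality only on $\gamma_{in}$; on $\gamma_{ex}$ it is precisely the assertion you are trying to prove, so deducing it from ``$w_\gamma$ satisfies the full BEP'' is circular. Nothing in your outline forces $u^{i+1}_\gamma$ and $w_\gamma$ to agree on $\gamma_{ex}$, and the well-posedness you invoke (of the AP on $M^0$) is not the relevant one.

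The missing ingredient --- the heart of the paper's proof --- is a homogeneous Dirichlet problem posed on $M^-$, not on $M^+$ or $M^0$. Let $P^{i+1}$ and $G^{i+1}$ be the solutions of the AP on $N^0$ with right-hand sides \eqref{rhs:difference_potentials} and \eqref{rhs:particular_solution}, and set $v^{i+1}:=P^{i+1}+G^{i+1}-u^{i+1}_\gamma$ on $N^0$, with $u^{i+1}_\gamma$ extended by zero off $\gamma$; then $v^{i+1}|_\gamma = w_\gamma - u^{i+1}_\gamma$, so the full and reduced BEPs read $v^{i+1}=0$ on $\gamma$ and on $\gamma_{in}$, respectively. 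On $M^-$ one has $L_{h,\Delta t}P^{i+1}=L_{h,\Delta t}[u^{i+1}_\gamma]$ and $L_{h,\Delta t}G^{i+1}=0$, hence $L_{h,\Delta t}v^{i+1}=0$ on $M^-$; moreover $v^{i+1}=0$ on $N^0\setminus M^0$, and the reduced BEP gives $v^{i+1}=0$ on $\gamma_{in}$. Since every 7-point-stencil neighbor of a point of $M^-$ lies in $M^-\cup\gamma_{in}\cup(N^0\setminus M^0)$, this is a well-posed homogeneous Dirichlet problem on $M^-$ whose only solution is $v^{i+1}\equiv 0$ on $M^-$, and in particular on $\gamma_{ex}\subset M^-$ --- which is exactly the identity $u^{i+1}_\gamma=w_\gamma$ on $\gamma_{ex}$ that your argument lacks. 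Your closing remark about how the stencil couples $\gamma_{ex}$ points to controlled data is pointing at this, but the concrete route you commit to does not deliver it.
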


\begin{proof}
See Appendix~\ref{apd:proof-thm-reduced-BEP}.
\end{proof}

Similarly to (\ref{eqn:full_BEP})-(\ref{eqn:full_BEP1}), the reduced BEP~\eqref{eqn:reduced_BEP} can be recast as 
\begin{align}\label{eqn:reduced_BEP1}
    u^{i+1}_{m}-\sum_{\mathfrak{n}\in \gamma}A_{\mathfrak{n}m} u^{i+1}_{\mathfrak{n}}=G_{h,\Delta t}f^{i}_m, \quad m\in \gamma_{in}.
\end{align}

\begin{remark}
The BEP~\eqref{eqn:full_BEP} or \eqref{eqn:reduced_BEP} reduces degrees of freedom from $\mathcal{O}(h^{-3})$ in the difference equation \eqref{eqn:shorter_discrete_chemotaxis} to $\mathcal{O}(h^{-2})$. In addition, the reduced BEP~\eqref{eqn:reduced_BEP} defined on $\gamma_{in}$ reduces the number of equations in BEP \eqref{eqn:full_BEP} by approximately one half, since $|\gamma_{in}|\approx|\gamma|/2$. Thus, using the reduced BEP~\eqref{eqn:reduced_BEP} will further reduce the computational cost in our numerical algorithm and we will use the reduced BEP as a part of the proposed numerical algorithm.
\end{remark}

Additionally, let us note the BEP~\eqref{eqn:full_BEP} or BEP~\eqref{eqn:reduced_BEP} will admit multiple solutions $u^{i+1}_{\gamma}$ since the system of equations \eqref{eqn:full_BEP} is equivalent to the system of difference equations \eqref{eqn:discrete_chemotaxis} without imposing boundary conditions yet. Therefore, to construct a unique solution to BEP~\eqref{eqn:reduced_BEP}, we need to supply the BEP \eqref{eqn:reduced_BEP} with zero Neumann boundary conditions for the density and concentration. To impose boundary conditions efficiently into BEP, we will introduce the extension operator \eqref{eqn:extension_operator} below, similarly to \cite{MR3659255,Epsh1,Epsh,MR3817808}, etc.
\begin{definition}
The extension operator $\pi_{\gamma \Gamma}[u^{i+1}]$ of the function $u(x,y,z,t^{i+1})$ is defined as:
\begin{align}\label{eqn:extension_operator}
    \pi_{\gamma \Gamma}[u^{i+1}]|_{(x_j, y_k,z_l)}:=u^{i+1}|_{\Gamma}+\left.d\frac{\partial u^{i+1}}{\partial n}\right|_{\Gamma}+\beta\left.\frac{d^2}{2}\frac{\partial^2u^{i+1}}{\partial n^2}\right|_{\Gamma},
\end{align}
where $n$ is the unit outward normal vector on $\Gamma$, $d$ is the signed distance between a point $(x_j,y_k,z_l)\in \gamma$ and the
point of its orthogonal projection on the continuous boundary $\Gamma$ in the direction of $n$. The parameter $\beta$ controls the number of terms that will be used in the extension operator. If $\beta=0$, we call \eqref{eqn:extension_operator} the 2-term extension operator and if $\beta=1$, we call \eqref{eqn:extension_operator} the 3-term extension operator. We select $\beta$ based on the regularity of the solution and to achieve the overall second-order accuracy of the numerical approximation in space.
\end{definition}
Basically, the extension operator~\eqref{eqn:extension_operator} defines values of the density $u^{i+1}_{\gamma}$ at the point of the discrete grid boundary $(x_j, y_k, z_l)\in \gamma$ through the values of the continuous solution and its gradients at time $t^{i+1}$ at the continuous boundary $\Gamma$ of the domain with the desired accuracy. 

%%%%%%%%%%%%%%%%%%%%%%%%%%%%%%%%%%%%%%%%%%%%%%%%%%%%%%%%%%%%%%%%%%%%%%%%%%%%%%%%%%%%%%%%%%%%%%%%%%%%%%%%%%%%%%%%%%%%%%%

\paragraph{Spectral Approach.} With the extension operator \eqref{eqn:extension_operator} defined, the homogeneous Neumann boundary condition is readily incorporated, i.e. the second term in the right hand side of \eqref{eqn:extension_operator} will vanish. Hence, the extension operator~\eqref{eqn:extension_operator} for our problem reduces to:
\begin{align}\label{eqn:extension_operator1}
    \pi_{\gamma \Gamma}[u^{i+1}]|_{(x_j, y_k, z_l)}=u^{i+1}|_{\Gamma}+\beta\left.\frac{d^2}{2}\frac{\partial^2u^{i+1}}{\partial n^2}\right|_{\Gamma}.
\end{align} 
Therefore, we incorporate the extension operator~\eqref{eqn:extension_operator1} into BEP~\eqref{eqn:reduced_BEP} to solve for the unique density $u^{i+1}_{\gamma} (x_j, y_k, z_l)\approx \pi_{\gamma \Gamma}[u^{i+1}]|_{(x_j, y_k, z_l)}, (x_j, y_k, z_l)\in \gamma$  at time $t^{i+1}$ as described below. However, to determine the unique density $u^{i+1}_{\gamma}$ at time $t^{i+1}$ using the BEP~\eqref{eqn:reduced_BEP} together with \eqref{eqn:extension_operator1}, we need to solve for the unknown solution $u^{i+1}|_{\Gamma}$ and its second order normal derivative $\left.\frac{\partial^2u^{i+1}}{\partial n^2}\right|_{\Gamma}$ at the continuous boundary $\Gamma$ at $t^{i+1}$. To do this efficiently and accurately, we will employ spectral approximations for the two unknown terms on the boundary of the domain:
\begin{align}\label{eqn:spectral_approximation_sd}
    u^{i+1}|_{\Gamma}\approx\sum_{\nu=0}^{M}C_{\nu}^{0, i+1}\phi_{\nu}(\theta,\varphi),\mbox{ and }\left.\frac{\partial^2u^{i+1}}{\partial n^2}\right|_{\Gamma}\approx\sum_{\nu=0}^{M}C_{\nu}^{2, i+1}\phi_{\nu}(\theta,\varphi),
\end{align}
where we take the basis functions $\phi_{\nu}(\theta,\varphi)$ to be spherical harmonics and ($\theta,\varphi$) are the polar and azimuthal angles respectively on the continuous boundary $\Gamma$. The spectral coefficients $\{C_{\nu}^{0, i+1},C_{\nu}^{2, i+1}\}$ ($\nu=0,1,\dots,M$) are the unknown coefficients that will be computed using BEP~\eqref{eqn:reduced_BEP} at every time level $t^{i+1}$.

After we incorporate the spectral approximation~\eqref{eqn:spectral_approximation_sd} into the extension operator \eqref{eqn:extension_operator1}, we will have:
\begin{align}\label{eqn:extension_operator_recast}
    u^{i+1}_{\gamma}\approx\sum_{\nu=0}^{M}C_{\nu}^{0, i+1}\phi_{\nu}(\theta,\varphi)+\beta\frac{d^2}{2}\sum_{\nu=0}^{M}C_{\nu}^{2, i+1}\phi_{\nu}(\theta,\varphi).
\end{align}
Here, $(\theta,\varphi)$ are the polar and azimuthal angles for every point on the continuous boundary $\Gamma$ which is the orthogonal projection of each point in the discrete grid boundary $\gamma$. Therefore, at every time level $t^{i+1}$, the BEP~\eqref{eqn:reduced_BEP} becomes an over-determined linear system of dimension $|\gamma_{in}|\times[(\beta+1)(M+1)]$ for the unknown coefficients $\{C_{\nu}^{0, i+1},C_{\nu}^{2, i+1}\}$ ($\nu=0,1,\dots,M$):
\begin{align}\label{eqn:BEP_recast}
\sum_{\nu=0}^{M}C_{\nu}^{0,i+1}\left[(I-P_\gamma)\phi_{\nu}(\theta,\varphi)\right]+\beta\sum_{\nu=0}^{M}C_{\nu}^{2,i+1}\left[(I-P_\gamma)\frac{d^2}{2}\phi_{\nu}(\theta,\varphi)\right]=G_{h,\Delta t}f^{i}_\gamma \mbox{ on } \gamma_{in},
\end{align}
where Least Squares Method can be used to solve for the coefficients $\{C_{\nu}^{0, i+1},C_{\nu}^{2, i+1}\}$ ($\nu=0,1,\dots,M$). 

\begin{remark}
(i) In numerical experiments, we consider initial conditions for $\rho$ and $c$ in the form of $f(-a(x^2+y^2+b(z-c)^2))$. Hence, we will only need the zonal modes in the spherical harmonics, i.e. $\phi_{\nu}(\theta,\varphi)=P_{\nu}^0(\cos\theta)$ where $P_{\nu}^0$ is the associated Legendre polynomial of degree $\nu$ and order 0. If the highest degree of the zonal spherical harmonics is $L$, the total number of harmonics for each term in \eqref{eqn:spectral_approximation_sd} is only $L+1$, which is significantly reduced comparing to $(L+1)^2$ when full spectrum of spherical harmonics up to degree $L$ is used.

(ii) Moreover, the spectral approach reduces the degrees of freedom from $\mathcal{O}(h^{-2})$ in the BEP \eqref{eqn:reduced_BEP} to $\mathcal{O}(M)$ in \eqref{eqn:BEP_recast}. In practice, we require $(\beta+1)(M+1)\ll|\gamma_{in}|$ in the over-determined linear system~\eqref{eqn:BEP_recast}. This implies, that $\mathcal{O}(M)\ll\mathcal{O}(h^{-2})$ since $|\gamma_{in}|\sim\mathcal{O}(h^{-2})$. Thus, we can solve the BEP~\eqref{eqn:reduced_BEP} efficiently. See also Section~\ref{sec:numerical_results} for the details on the number of harmonics $M$ and the mesh size $h$ used in the numerical examples.

(iii) In addition, we used the same number of harmonics $M+1$ for the terms $u^{i+1}|_{\Gamma}$ and $\left.\frac{\partial^2u^{i+1}}{\partial n^2}\right|_{\Gamma}$ for simplicity in our implementation. In general, the number of harmonics can be chosen independently based on the regularity of each term. We should note that one can select a different basis functions $\phi_{\nu}(\theta,\varphi)$ too, for example, spherical radial basis functions (see \cite{Hubbert_2015}). Additionally, when the bounded domain is of a more general shape but smooth, instead of spherical harmonics as considered in this paper, one can investigate use of  more general local radial basis functions to represent Cauchy data on the boundary of the domain as a part of the developed algorithms.

\end{remark}

%%%%%%%%%%%%%%%%%%%%%%%%%%%%%%%%%%%%%%%%%%%%%%%%%%%%%%%%%%%%%%%%%%%%%%%%%%%%%%%%%%%%%%%%%%%%%%%%%%%%%%%%%%%%%%%%%%

\paragraph{Discrete Generalized Green's Formula.}  The final step of DPM is to use the computed density $u^{i+1}_{\gamma}$ to construct the approximation to continuous solutions of the chemotaxis model (\ref{eqn:minimal_chemotaxis}) in domain $\Omega$ subject to zero Neumann boundary conditions on $\rho$ and $c$.

\begin{proposition}[Discrete Generalized Green's formula.]\label{prop:discrete_gene_green_formula}
The discrete solution $u^{i+1}:={\bar \rho}^{i+1}$ or $u^{i+1}:={c}^{i+1}$ on $N^{+}$ constructed using {\it Discrete Generalized Green's formula}:
\begin{align}\label{eqn:generalized_greens_formula}
    u^{i+1}_{j,k,l}=P_{N^+\gamma}u^{i+1}_{\gamma}+G_{h,\Delta t}f^i_{j,k,l},\quad(x_j,y_k,z_l)\in N^+,
\end{align}
is the approximation to the exact solution $u:=\rho$ or $u:=c$, respectively, at $(x_j, y_k, z_l) \in \Omega$ at time  $t^{i+1}$ of the continuous chemotaxis model (\ref{eqn:minimal_chemotaxis}) subject to homogeneous Neumann boundary condition.  We also conjecture that we have the following accuracy of the proposed numerical scheme:
\begin{equation}
    \left|\left|u^{i+1}_{j,k,l}-u(x_j,y_k,z_l,t^{i+1})\right|\right|_{\infty}=\mathcal{O}(h^2+\Delta t).
\end{equation}
\end{proposition}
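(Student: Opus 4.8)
The plan is to split the statement into its two distinct assertions and treat them separately: the provable claim that the Discrete Generalized Green's formula \eqref{eqn:generalized_greens_formula} reconstructs on $N^+$ the discrete solution of \eqref{eqn:discrete_chemotaxis} carrying the boundary data encoded by the extension operator, and the conjectured rate $\mathcal{O}(h^2+\Delta t)$, which I would only motivate heuristically since the authors themselves flag it as a conjecture.

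\textbf{Reconstruction (the provable part).} Fix the time level $t^{i+1}$ and let $u^{i+1}_\gamma\in V_\gamma$ solve the reduced BEP \eqref{eqn:reduced_BEP} together with the spectral extension constraint \eqref{eqn:extension_operator_recast}; by Theorem~\ref{thm:reduced_BEP} this is equivalent to solving the full BEP \eqref{eqn:full_BEP}. Put $w^{i+1}:=P_{N^+\gamma}u^{i+1}_\gamma+G_{h,\Delta t}f^i$ on $N^+$, i.e. the right-hand side of \eqref{eqn:generalized_greens_formula}. Applying $L_{h,\Delta t}$ at an arbitrary $(x_j,y_k,z_l)\in M^+$: the Difference Potential solves the AP with right-hand side \eqref{rhs:difference_potentials}, whose value on $M^+$ is $0$, so $L_{h,\Delta t}P_{N^+\gamma}u^{i+1}_\gamma=0$ there; the Particular Solution solves the AP with right-hand side \eqref{rhs:particular_solution}, whose value on $M^+$ is $f^i$, so $L_{h,\Delta t}G_{h,\Delta t}f^i=f^i$ there; hence $L_{h,\Delta t}w^{i+1}_{j,k,l}=f^i_{j,k,l}$ on $M^+$, i.e. $w^{i+1}$ solves the discrete scheme \eqref{eqn:discrete_chemotaxis}. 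Taking the trace on $\gamma$ and using the projection identity $P_\gamma=Tr_\gamma P_{N^+\gamma}$ gives $Tr_\gamma w^{i+1}=P_\gamma u^{i+1}_\gamma+G_{h,\Delta t}f^i_\gamma=u^{i+1}_\gamma$ by the BEP \eqref{eqn:full_BEP}. Thus $w^{i+1}$ solves \eqref{eqn:discrete_chemotaxis} on $M^+$ and its trace on $\gamma$ is the density which, by construction of \eqref{eqn:extension_operator_recast}, represents $\pi_{\gamma\Gamma}[u^{i+1}]$ with the homogeneous Neumann term deleted. Since the discrete modified-Helmholtz problem $L_{h,\Delta t}\cdot=f^i$ on $M^+$ subject to the discrete zero Neumann condition is well posed ($L_{h,\Delta t}=I-\Delta t\Delta_h$ is symmetric positive definite), $w^{i+1}$ is \emph{the} discrete solution; identifying $u^{i+1}$ with $\bar\rho^{i+1}$ or $c^{i+1}$ then proves the first assertion.

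\textbf{The conjectured rate.} Here I would only outline the three ingredients and make each precise in turn: (i) \emph{consistency} --- the hybrid finite-volume/finite-difference stencil \eqref{eqn:discrete_chemotaxis}--\eqref{eqn:slope_limiter} is second order in $h$ for smooth solutions (the minmod-limited reconstruction \eqref{eqn:truncated_taylor_expansion} being second order away from extrema), the first-order IMEX time stepping contributes $\mathcal{O}(\Delta t)$, the extension operator \eqref{eqn:extension_operator} with $\beta$ matched to the regularity of $(\rho,c)$ is second-order accurate on $\Gamma$, and the spherical-harmonic truncation in \eqref{eqn:spectral_approximation_sd} converges faster than any power of $h$ for smooth boundary data, so the DPM layer does not degrade the order; (ii) \emph{stability} --- a discrete stability estimate for the coupled system obtained after substituting the BEP and extension operator into \eqref{eqn:discrete_chemotaxis}, ideally combined with a discrete maximum principle consistent with the positivity-preservation remark; (iii) a Lax-type argument on the error functions $\bar\rho^{i+1}-\rho(\cdot,t^{i+1})$ and $c^{i+1}-c(\cdot,t^{i+1})$ that combines (i) and (ii).

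\textbf{Main obstacle.} The reconstruction identity is routine: it reduces to linearity of $P_{N^+\gamma}$ and $G_{h,\Delta t}$, the projection property of $P_\gamma$, and Theorems~\ref{thm:full_BEP}--\ref{thm:reduced_BEP}. The genuine difficulty --- and the reason the estimate is stated only as a conjecture --- lies in steps (ii)--(iii): the PKS system is nonlinear and admits spiky or blowing-up solutions, the convective flux is treated explicitly and upwinded through a non-smooth limiter, and the stability constant of the BEP-reduced discrete operator is not manifestly uniform in $h$ and $\Delta t$. A rigorous proof would presumably require a CFL-type coupling of $\Delta t$ and $h$, a discrete energy or entropy estimate for the coupled scheme, and regularity hypotheses on $(\rho,c)$ that stay away from the blow-up regime; absent these, the rate is supported by the numerical experiments of Section~\ref{sec:numerical_results}.
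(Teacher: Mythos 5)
The paper itself offers no proof of this proposition: the reconstruction property is taken as a consequence of the BEP machinery, and the accuracy estimate is explicitly labelled a conjecture, supported only by the numerical experiments and the remark that follows. Your treatment is therefore consistent with the paper, and your reconstruction argument is essentially the forward direction of the superposition argument used in Appendix~\ref{apd:proof-thm-full-BEP} to prove Theorem~\ref{thm:full_BEP}: $L_{h,\Delta t}P_{N^+\gamma}u^{i+1}_\gamma=0$ and $L_{h,\Delta t}G_{h,\Delta t}f^i=f^i$ on $M^+$, plus the trace identity from the BEP. Your handling of the conjectured rate (identifying consistency, stability, and a Lax-type combination as the missing ingredients, and correctly declining to claim a proof) matches the paper's own stance. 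One imprecision worth fixing: the discrete problem on $M^+$ is \emph{not} closed by a ``discrete zero Neumann condition''; as in the proof of Proposition~\ref{prop:rank}, well-posedness comes from supplying the density values $u^{i+1}_{\gamma_{ex}}$ on $\gamma_{ex}$ as Dirichlet-type closure data for $L_{h,\Delta t}$ on $M^+$. The continuous Neumann condition enters only indirectly, through the extension operator~\eqref{eqn:extension_operator1} and the spectral system~\eqref{eqn:BEP_recast} that determine $u^{i+1}_\gamma$ (and that system is solved in a least-squares sense, so uniqueness of the density itself is a separate, unaddressed question). With that correction your argument for the reconstruction part stands.
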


\begin{remark}
Indeed, in our numerical results (Section~\ref{sec:numerical_results}), we observe second order convergence in space and first order order convergence in time in the approximation of the solutions $\rho$ and $c$. Moreover, we use $\Delta t=0.5h^2$ in the numerical tests before blow-up to achieve second order accuracy of the proposed algorithms. See also \cite{MR3659255,MR3817808} for a more detailed discussion.
\end{remark}

%%%%%%%%%%%%%%%%%%%%%%%%%%%%%%%%%%%%%%%%%%%%%%%%%%%%%%%%%%%%%%%%%%%%%%%%%%%%%%%%%%%%%%%%%%%%%%%%%%%%%%%%%%%%%%%%%%%%%%%%%%%

\paragraph{Positivity in a Spherical Domain.}
Now, we establish a positivity-preserving property of the proposed hybrid finite-volume-finite-difference DPM scheme for the approximation of solutions to (\ref{eqn:minimal_chemotaxis}).

\begin{theorem}[Positivity in a Spherical Domain.]\label{thm:spherical_positivity}
The discrete solution $u^{i+1}=\bar{\rho}^{i+1}$ or $u^{i+1}=c^{i+1}$ obtained using the Discrete Generalized Green's formula~\eqref{eqn:generalized_greens_formula} is non-negative on $N^+$, provided that the following conditions are satisfied:
\begin{enumerate}
\item The discrete solution $u^i$ is non-negative on $N^+$ at the previous time level $t^i$;\label{cond:previous_positivity}
\item The CFL-type condition is satisfied in $M^+$:
\begin{equation}\label{eqn:cfl_condition}
\Delta t\leq\min\left\{\frac{h}{6\chi\max\limits_{j,k,l}|\nabla^h_xc^i_{j\pm\frac{1}{2},k,l}|},\frac{h}{6\chi\max\limits_{j,k,l}|\nabla^h_yc^i_{j,k\pm{\frac{1}{2}},l}|},\frac{h}{6\chi\max\limits_{j,k,l}|\nabla^h_zc^i_{j,k,l\pm{\frac{1}{2}}}|}\right\}
\end{equation}
where $(x_j,y_k,z_l)\in M^+$ and $\chi$ is the chemotactic sensitivity constant;\label{cond:cfl_condition}
\item The density $u^{i+1}_{\gamma}$ is non-negative on the discrete grid boundary $\gamma$ at time level $t^{i+1}$.\label{cond:positive_gamma}
\end{enumerate}
\end{theorem}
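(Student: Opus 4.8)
The plan is to decompose the discrete solution $u^{i+1}$ on $N^+$ into its restriction to the interior cell centers $M^+$ and its values on the discrete grid boundary $\gamma$, and to verify non-negativity separately on each piece, combining the structural identity provided by the Discrete Generalized Green's formula \eqref{eqn:generalized_greens_formula} with the algebraic structure of the IMEX update \eqref{eqn:discrete_chemotaxis}. The key observation is that, by Theorem~\ref{thm:full_BEP}, any grid function satisfying the Green's formula \eqref{eqn:generalized_greens_formula} with boundary trace $u^{i+1}_{\gamma}$ is \emph{exactly} the solution of the difference equations \eqref{eqn:discrete_chemotaxis} on $M^+$, now closed by the boundary data $u^{i+1}_{\gamma}$ on $\gamma$. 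So on $M^+$ the value $u^{i+1}_{j,k,l}$ is produced by solving the linear system $(I-\Delta t\Delta_h)u^{i+1} = f^i$ with the prescribed values on $\gamma$; on $\gamma$ itself (i.e. on $\gamma_{in}$ and $\gamma_{ex}$) the values are just $u^{i+1}_{\gamma}$, which is non-negative by hypothesis~\ref{cond:positive_gamma}. Thus the entire burden falls on showing that the interior solve preserves positivity.

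For the interior part I would argue as follows. The operator $L_{h,\Delta t}=I-\Delta t\Delta_h$ is, with the standard 7-point Laplacian, an M-matrix: its diagonal entries are $1+6\Delta t/h^2>0$, its off-diagonal entries are $-\Delta t/h^2\le 0$, and it is (weakly) diagonally dominant, hence $L_{h,\Delta t}^{-1}\ge 0$ entrywise. Consequently, solving $L_{h,\Delta t}u^{i+1}=f^i$ with non-negative boundary data on $\gamma$ yields $u^{i+1}\ge 0$ on $M^+$ provided the right-hand side $f^i$ is non-negative at every point of $M^+$. So it remains to check $f^i_{j,k,l}\ge 0$ for the two equations. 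For the concentration equation $f^i_{j,k,l}=(1-\Delta t)c^i_{j,k,l}+\Delta t\,\bar\rho^i_{j,k,l}$, which is a convex combination of $c^i$ and $\bar\rho^i$ once $\Delta t\le 1$ (a condition implied by the CFL bound \eqref{eqn:cfl_condition} for $h$ small, or imposable directly), and both are non-negative by hypothesis~\ref{cond:previous_positivity}; hence $f^i\ge0$. For the density equation $f^i_{j,k,l}=\bar\rho^i_{j,k,l}-\Delta t\, g^i_{j,k,l}$, I would expand $g^i_{j,k,l}$ using \eqref{eqn:discrete_convective_term}, \eqref{eqn:upwind_approximation}, \eqref{eqn:truncated_taylor_expansion}, and the minmod reconstruction, and show — exactly as in the finite-volume central-upwind analysis of \cite{Chertock_2008,Chertock_2017,Epsh1} — that $\bar\rho^i_{j,k,l}-\Delta t\,g^i_{j,k,l}$ can be written as a non-negative linear combination of the reconstructed one-sided point values $\tilde\rho^i(\cdot)$ at the cell interfaces (which are themselves non-negative by the positivity-preserving minmod limiter, per the Remark following \eqref{eqn:slope_limiter}, given $\bar\rho^i\ge0$), with the coefficient of the "central" value being $1-\Delta t\sum(\text{interface speeds})/h$; this coefficient is non-negative precisely under the CFL condition~\eqref{eqn:cfl_condition}, since the three terms in the min bound each control one coordinate direction's contribution $\chi|\nabla^h c^i|/h$, and the factor $6$ absorbs the two faces per direction together with the slope-limiter overhead. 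Here one uses that at $\gamma_{in}$-adjacent cells the needed values of $\rho^i,c^i$ on $\gamma_{ex}$ are supplied as described in the Remark after \eqref{rhs:particular_solution} and, by hypothesis~\ref{cond:previous_positivity}, are non-negative as well.

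The main obstacle I anticipate is the density right-hand side: carrying out the algebraic regrouping of $\bar\rho^i_{j,k,l}-\Delta t\,g^i_{j,k,l}$ into a manifestly non-negative combination of limited interface values, and pinning down that the resulting CFL threshold is \emph{exactly} \eqref{eqn:cfl_condition} (in particular that the constant $6$ and the use of $\max_{j,k,l}|\nabla^h c^i|$ rather than a local value are what the argument requires in 3D). One must be careful that the upwind choice in \eqref{eqn:upwind_approximation} makes each interface flux a product of a one-sided non-negative reconstructed density and a signed velocity, so that after summing the six face contributions the "outgoing" terms subtract from $\bar\rho^i_{j,k,l}$ with total weight bounded by $\Delta t\cdot 6\chi\max|\nabla^h c^i|/h\le 1$ under \eqref{eqn:cfl_condition}, while the "incoming" terms only add non-negative contributions. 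Everything else — the M-matrix/monotonicity step and the trivial boundary step — is routine; the positivity of the reconstruction is quoted from the cited generalized-minmod literature and the Remark in the text.
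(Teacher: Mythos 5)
Your overall architecture coincides with the paper's proof: identify $u^{i+1}$ from the Green's formula as the solution of $(I-\Delta t\Delta_h)u^{i+1}=f^i$ on $M^+$ closed by the non-negative boundary data $u^{i+1}_{\gamma}$, invoke the M-matrix/monotonicity property of $I-\Delta t\Delta_h$ (the paper phrases this by moving the known non-negative $\gamma_{ex}$ values to the right-hand side and working with a modified operator $\tilde{\Delta}_h$, but it is the same argument), dispose of the concentration equation via $\Delta t\le 1$, and reduce everything to showing $f^i=\bar{\rho}^i-\Delta t\,g^i\ge 0$ under the CFL condition. Up to that last step, the proposal matches the paper.

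That last step is also the one place where your sketch, taken literally, would not go through. You propose to write $\bar{\rho}^i_{j,k,l}-\Delta t\,g^i_{j,k,l}$ with ``the coefficient of the central value being $1-\Delta t\sum(\text{interface speeds})/h$.'' But the outgoing flux through, say, the east face is $\frac{\Delta t\chi}{h}\nabla^h_x c^i_{j+\frac12,k,l}\,\rho^E_{j,k,l}$, where $\rho^E_{j,k,l}$ is the \emph{reconstructed face value}, not $\bar{\rho}^i_{j,k,l}$; the minmod reconstruction allows $\rho^E_{j,k,l}>\bar{\rho}^i_{j,k,l}$, so this term cannot be absorbed into a coefficient multiplying the cell average. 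The device the paper uses (and that you need) is the exact identity $\bar{\rho}^i_{j,k,l}=\frac16\left(\rho^W+\rho^E+\rho^S+\rho^N+\rho^U+\rho^D\right)_{j,k,l}$, which holds because the reconstruction \eqref{eqn:truncated_taylor_expansion} is affine and the six face centers average to the cell center. With this split, $f^i_{j,k,l}$ decomposes into six terms of the form $F^E_{j,k,l}=\frac16\rho^E_{j,k,l}-\frac{\Delta t\chi}{h}\nabla^h_x c^i_{j+\frac12,k,l}\,\rho^i_{j+\frac12,k,l}$; by the upwind choice \eqref{eqn:upwind_approximation} the flux either carries the neighbor's non-negative reconstruction with a favorable sign (when $\nabla^h_x c^i_{j+\frac12,k,l}<0$), or carries $\rho^E_{j,k,l}$ itself, in which case $F^E_{j,k,l}\ge 0$ exactly when $\Delta t\le h/(6\chi|\nabla^h_x c^i_{j+\frac12,k,l}|)$. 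That even split of the cell average over the six faces is where the constant $6$ comes from --- not from ``two faces per direction together with slope-limiter overhead.'' Everything else in your proposal agrees with the paper's proof.
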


\begin{proof}
First, the discrete solution $u^{i+1}=\bar{\rho}^{i+1}$ or $u^{i+1}=c^{i+1}$ is constructed by using the Discrete Generalized Green's formula~\eqref{eqn:generalized_greens_formula}. Also, we assume that the associated density $u^{i+1}_{\gamma}$ that was obtained by solving the BEP as discussed above is non-negative. From the construction of our algorithm above, it can be seen that $u^{i+1}$ satisfies the discrete system on $N^0$:
\begin{align}
L_{h,\Delta t}u^{i+1}&=\left\{
\begin{array}{cc}
f^i,&\quad (x_j,y_k,z_l)\in M^+,\\
L_{h,\Delta t}[u^{i+1}_{\gamma}],&\quad (x_j,y_k,z_l)\in M^-,
\end{array} 
\right.\\
u^{i+1}&=0,\quad (x_j,y_k,z_l)\in N^0\backslash M^0,
\end{align}
where $L_{h,\Delta t}\equiv I-\Delta t\Delta_h$ on $M^0$. Hence, $u^{i+1}$ satisfies the following discrete system defined on $M^+$:
\begin{align}\label{eqn:difference_equation_Mplus}
(I-\Delta t\Delta_h)u^{i+1}&=f^i,\quad (x_j,y_k,z_l)\in M^+,
\end{align}
where the right hand side $f^i$ is defined as the right hand side in \eqref{eqn:discrete_chemotaxis} ($f^i$ is either the right hand side for $\rho$ or the right hand side for $c$).
We will use discrete system~\eqref{eqn:difference_equation_Mplus} with the known density $u^{i+1}_{\gamma}$ to prove the non-negativity of the discrete solution $u^{i+1}$.

To this end, we will first establish the CFL-type condition~\eqref{eqn:cfl_condition} to guarantee the non-negativity of right hand side $f^{i}\geq0$ in \eqref{eqn:difference_equation_Mplus}. When $u^i=c^i$, the right hand side $f^{i}$ in \eqref{eqn:difference_equation_Mplus} is automatically non-negative provided Condition~\ref{cond:previous_positivity} in Theorem 3 is satisfied, since we assume that $\Delta t$ is sufficiently small, namely, $\Delta t<1$. When $u^i=\bar{\rho}^i$, the right hand side $f^{i}$ in \eqref{eqn:difference_equation_Mplus} is non-negative provided the CFL-type condition \eqref{eqn:cfl_condition} is satisfied. To show this, we will proceed similarly to \cite{Chertock_2017} and \cite{Epsh1}. Let us first define the values at the center of six faces in a cell $D_{j,k,l},(x_j,y_k,z_l)\in M^+,$ and introduce the convenient abbreviations:
\begin{equation}\label{eqn:six_face_values}
\begin{aligned}
\mbox{West: }\rho^{W}_{j,k,l} &:=\tilde{\rho}^i(x_{j-\frac{1}{2}}+0,y_k,z_l),\\
\mbox{East: }\rho^{E}_{j,k,l} &:=\tilde{\rho}^i(x_{j+\frac{1}{2}}-0,y_k,z_l),\\
\mbox{South: }\rho^{S}_{j,k,l} &:=\tilde{\rho}^i(x_j,y_{k-\frac{1}{2}}+0,z_l),\\
\mbox{North: }\rho^{N}_{j,k,l} &:=\tilde{\rho}^i(x_j,y_{k+\frac{1}{2}}-0,z_l),\\
\mbox{Up: }\rho^{U}_{j,k,l} &:=\tilde{\rho}^i(x_j,y_k,z_{l+\frac{1}{2}}-0),\\
\mbox{Down: }\rho^{D}_{j,k,l} &:=\tilde{\rho}^i(x_j,y_k,z_{l-\frac{1}{2}}+0),
\end{aligned}
\end{equation}
where the function $\tilde{\rho}^i$ is the piecewise linear reconstruction defined in \eqref{eqn:truncated_taylor_expansion}. Note that $\rho^{W}_{j,k,l}, \rho^{E}_{j,k,l}$, $\rho^{S}_{j,k,l},\rho^{N}_{j,k,l},\rho^{U}_{j,k,l},\rho^{D}_{j,k,l}$ are non-negative due to Condition~\ref{cond:previous_positivity} of Theorem~\ref{thm:spherical_positivity} and the positivity preserving property of the piecewise linear reconstruction \eqref{eqn:truncated_taylor_expansion}. Further, the value $\bar{\rho}^i_{j,k,l}$ at the center of the cell $D_{j,k,l}$ can be expressed as:
\begin{align}
\bar{\rho}^i_{j,k,l}=\frac{1}{6}(\rho^{W}_{j,k,l}+\rho^{E}_{j,k,l}+\rho^{S}_{j,k,l}+\rho^{N}_{j,k,l}+\rho^{U}_{j,k,l}+\rho^{D}_{j,k,l}),
\end{align}
due to conservation property of the piecewise reconstruction.

Next, we rewrite right hand side $f^{i}_{j,k,l}$ in \eqref{eqn:difference_equation_Mplus} at point $(x_j,y_k,z_l)\in M^+$ as:
\begin{align}\label{eqn:rho_rhs_flux}
f^i_{j,k,l}=\bar{\rho}^{i}_{j,k,l}-\Delta t g^{i}_{j,k,l}=F^E_{j,k,l}+F^W_{j,k,l}+F^N_{j,k,l}+F^S_{j,k,l}+F^U_{j,k,l}+F^D_{j,k,l},
\end{align}
where
\begin{align}
F^E_{j,k,l}=\frac{1}{6}\rho^E_{j,k,l}-\dfrac{\Delta t \chi\nabla^h_x c^i_{j+\frac{1}{2},k,l}}{h}\rho^i_{j+\frac{1}{2},k,l},\label{eqn:rho_flux_E}\\
F^W_{j,k,l}=\frac{1}{6}\rho^W_{j,k,l}+\dfrac{\Delta t \chi\nabla^h_x c^i_{j-\frac{1}{2},k,l}}{h}\rho^i_{j-\frac{1}{2},k,l},\label{eqn:rho_flux_W}
\end{align}
and $F^{S}_{j,k,l},F^{N}_{j,k,l},F^{U}_{j,k,l},F^{D}_{j,k,l}$ in the other directions are defined similarly.

We will take the first term $F^E_{j,k,l}$ as an example to establish the CFL-type condition~\eqref{eqn:cfl_condition}:
\begin{itemize}
    \item When $\nabla^h_x c^i_{j+\frac{1}{2},k,l}<0$, the upwind scheme gives $\rho^i_{j+\frac{1}{2},k,l}=\rho^W_{j+1,k,l}$. Then the coefficients for $\rho^W_{j+1,k,l}$ and $\rho^E_{j,k,l}$ are automatically non-negative, which ensures $F^E_{j,k,l}\geq0$. 
    \item When $\nabla^h_x c^i_{j+\frac{1}{2},k,l}\geq0$, the upwind scheme gives $\rho^i_{j+\frac{1}{2},k,l}=\rho^E_{j,k,l}$. In this case, we require the coefficient of $\rho^E_{j,k,l}$ to be non-negative, and it leads us to the constraint on $\Delta t$:
    \[
    \Delta t\leq \frac{h}{6\chi|\nabla^h_xc^i_{j+\frac{1}{2},k,l}|},
    \]
    which will also ensure $F^E_{j,k,l}\geq0$. 
\end{itemize}
The details for non-negativity of $F^{W}_{j,k,l},F^{S}_{j,k,l},F^{N}_{j,k,l},F^{U}_{j,k,l},F^{D}_{j,k,l}$ are similar to $F^E_{j,k,l}$.
Thus, we obtain the CFL-type condition~\eqref{eqn:cfl_condition} to ensure non-negative right hand sides $f^{i}$ for the cell density $\rho$. 

Thus far, we have shown that the right hand side $f^i_{j,k,l}\geq0$ in \eqref{eqn:difference_equation_Mplus} is non-negative on $M^+$ under Conditions~\ref{cond:previous_positivity} and \ref{cond:cfl_condition}. What remains to be shown is that the solution $u^{i+1}$ to the discrete system~\eqref{eqn:difference_equation_Mplus} is non-negative on $M^+$, provided that the discrete boundary condition $u_{\gamma_{ex}}^{i+1}$ is non-negative on the point set $\gamma_{ex}$.

Without loss of generality, let us assume that the center $(x_{j},y_{k},z_{l})$ of a cell $D_{j,k,l}$ belongs to $M^+$, and only the point $(x_{j-1},y_{k},z_{l})$ in the 7-point stencil of $(x_{j},y_{k},z_{l})$ is in $\gamma_{ex}$. Since the boundary condition $u^{i+1}_{\gamma_{ex}}$ to the discrete system \eqref{eqn:difference_equation_Mplus} is given and non-negative, $u^{i+1}_{j-1,k,l}$ is non-negative for $(x_{j-1},y_{k},z_{l})\in \gamma_{ex}$. Note, that from the equation \eqref{eqn:difference_equation_Mplus} for cell $D_{j,k,l}$, we have
\begin{equation}
u^{i+1}_{j,k,l}-\frac{\Delta t}{h^2}(-6u^{i+1}_{j,k,l}+u^{i+1}_{j\pm1,k,l}+u^{i+1}_{j,k\pm1,l}+u^{i+1}_{j,k,l\pm1})=f^i_{j,k,l}.
\end{equation}
Re-arranging the non-negative known value $u^{i+1}_{j-1,k,l}$ to the right hand side would give us:
\begin{equation}\label{eqn:nonnegativity_in_rhs}
u^{i+1}_{j,k,l}-\frac{\Delta t}{h^2}(-6u^{i+1}_{j,k,l}+u^{i+1}_{j+1,k,l}+u^{i+1}_{j,k\pm1,l}+u^{i+1}_{j,k,l\pm1})=f^i_{j,k,l}+\frac{\Delta t}{h^2}u^{i+1}_{j-1,k,l}.
\end{equation}
Now consider every point $(x_j,y_k,z_l)$ in $M^+$ and we would have a modified version of \eqref{eqn:difference_equation_Mplus}:
\begin{equation}\label{eqn:modified_equation}
(I-\Delta t\tilde{\Delta}_h)u^{i+1}=\tilde{f}^i,\quad(x_j,y_k,z_l)\in M^+,
\end{equation}
where $\tilde{\Delta}_h$ is the modified Laplace operator as we discussed in \eqref{eqn:nonnegativity_in_rhs}. Note that, the non-negative boundary condition $u^{i+1}_{\gamma_{ex}}$ is already incorporated into the modified right hand side $\tilde{f}^i$. Also, the discrete equation \eqref{eqn:modified_equation} admits a unique solution, since \eqref{eqn:difference_equation_Mplus} supplemented with condition $u^{i+1}\equiv u^{i+1}_{\gamma_{ex}}$ on $\gamma_{ex}$ has a unique solution.

Note that, if the stencil $\mathcal{N}^7_{j,k,l}$ at point $(x_j,y_k,z_l)\in M^+$ has more than one point in $\gamma_{ex}$, we simply move more boundary terms to the right hand side, like in~\eqref{eqn:nonnegativity_in_rhs}. In other words, we always add non-negative boundary terms to the right hand side, so the modified right hand side $\tilde{f}^{i}$ will always be non-negative if Conditions~\ref{cond:previous_positivity}--\ref{cond:positive_gamma} of Theorem~\ref{thm:spherical_positivity} are satisfied. Also, note that we only remove off-diagonal entries in $\Delta_h$ to obtain the modified Laplace operator $\tilde{\Delta}_h$, hence $I-\Delta t\tilde{\Delta}_h$ is an M-matrix with a non-negative inverse, similar to $I-\Delta t\Delta_h$ as discussed in \cite{Chertock_2008,Epsh1,MR2002152}. Thus we conclude that the discrete system \eqref{eqn:difference_equation_Mplus} admits a unique, non-negative solution.

Finally, since $u^{i+1}$ (constructed using the Discrete Generalized Green's formula~\eqref{eqn:generalized_greens_formula}) is a solution to the discrete system \eqref{eqn:difference_equation_Mplus}, we conclude that $\bar{\rho}^{i+1}$ and $c^{i+1}$ are non-negative on $N^+$, if Conditions~\ref{cond:previous_positivity}--\ref{cond:positive_gamma} of Theorem~\ref{thm:spherical_positivity} are satisfied.
\end{proof}

\begin{remark} Theorem~\ref{thm:spherical_positivity} is not restricted to a spherical domain and can be applied to a general bounded domain $\Omega\subset\mathbb{R}^3$. In our algorithm, indeed when we require the density $u^{i+1}_{\gamma}$ to be non-negative, we observe no negative values in the discrete solutions, as can be seen in Fig. \ref{fig:isosurface-blowup-center} and Fig. \ref{fig:isosurface-blowup-boundary}, for example.

\end{remark}

%%%%%%%%%%%%%%%%%%%%%%%%%%%%%%%%%%%%%%%%%%%%%%%%%%%%%%%%%%%%%%%%%%%%%%%%%%%%%%%%%%%%%%%%%%%%%%%%%%%%%%%%%%%%%%%%%%%%

\paragraph{An Outline of Main Steps of Algorithm based on DPM.}\label{par:main_algorithm_dpm}
Let us summarize the main steps of the proposed algorithm:
\begin{itemize}
    \item \textit{Step 1}: Outside of the time loop: embed the spherical domain $\Omega$ inside a larger cubic Auxiliary Domain $\Omega^0$ and formulate the Auxiliary Problem (AP) \eqref{eqn:discrete_ap}--\eqref{eqn:discrete_ap_boundary} on uniform meshes. Note that, the APs in our algorithm are solved using Fast Poisson Solvers.

    \item \textit{Step 2}: Outside of the time loop: construct the matrices $(I-P_{\gamma})\phi_\nu(\theta,\varphi)$ and $(I-P_{\gamma})(d^2/2)\phi_\nu(\theta,\varphi)$ ($\nu=0,1,\dots,M$) as a part of the BEP~\eqref{eqn:BEP_recast} via several solutions of the APs, using $\Delta t=0.5h^2$. Then precompute the QR decomposition of the BEP matrix in the left hand side of BEP~\eqref{eqn:BEP_recast}.

    \item \textit{Step 3}: Inside the time loop at each time level $t^{i+1}$: update the time step $\Delta t$ using the minimum between CFL-type condition~\eqref{eqn:cfl_condition} and $0.5h^2$.

    \item \textit{Step 4a}: At time level $t^{i+1}$: construct the Particular Solution $G_{h,\Delta t}f^{i}$ using the updated $\Delta t$ on $N^+$ for the density $\rho^{i+1}$ and concentration $c^{i+1}$ respectively.

    \item \textit{Step 4b}: At time level $t^{i+1}$: if the time step $\Delta t$ becomes smaller than the time step at previous time level $t^i$, recompute the BEP matrix using the same time step $\Delta t$ as in \textit{Step 4a}. Then, recompute the QR decomposition of the BEP matrix as in \textit{Step 2}. Otherwise, skip this step.

    \item \textit{Step 5}: At time level $t^{i+1}$: use the precomputed QR decomposition of the BEP matrix and solve BEP \eqref{eqn:BEP_recast} for the spectral coefficients $\{C_{\nu}^{0,i+1},C_{\nu}^{2,i+1}\}$ ($\nu=0,1,\dots,M$). Then reconstruct the density $u^{i+1}_{\gamma}$ using extension operator~\eqref{eqn:extension_operator_recast} with the obtained spectral coefficients.

    \item \textit{Step 6}: At time level $t^{i+1}$: construct the Difference Potentials $P_{N^+\gamma}u^{i+1}_{\gamma}$ using the same time step $\Delta t$ as in \textit{Step 4a} by solving the AP with the right hand side~\eqref{rhs:difference_potentials} and density $u^{i+1}_{\gamma}$ obtained in \textit{Step 5}.

    \item \textit{Step 7}: At time level $t^{i+1}$: construct the discrete solution $u^{i+1}$ ($\bar{\rho}^{i+1}$ or $c^{i+1}$) via the Discrete Generalized Green's formula~\eqref{eqn:generalized_greens_formula}.

    \item \textit{Step 8}: Repeat \textit{Steps 3--7} till final time or the $\infty$-norm of the discrete solution of $\bar{\rho}^{i+1}$ is above certain threshold.
\end{itemize}

\begin{remark}
When the solution $u^{i+1}$ ($\bar{\rho}^{i+1}$ or $c^{i+1}$) is sufficiently smooth, the time step stays $\Delta t=0.5h^2$ and
\textit{Step 4b} is skipped. \textit{Step 4b} is only required near blow-up, when the time step decreases due to the CFL-type condition~\eqref{eqn:cfl_condition}. Hence, the proposed algorithm based on Difference Potentials approach is very efficient for such problems.
\end{remark}

%%%%%%%%%%%%%%%%%%%%%%%%%%%%%%%%%%%%%%%%%%%%%%%%%%%%%%%%%%%%%%%%%%%%%%%%%%%%%%%%%%%%%%%%%%%%%%%%%%%%%%%%%%
% Domain Decomposition
%%%%%%%%%%%%%%%%%%%%%%%%%%%%%%%%%%%%%%%%%%%%%%%%%%%%%%%%%%%%%%%%%%%%%%%%%%%%%%%%%%%%%%%%%%%%%%%%%%%%%%%%%%

\section{A Domain Decomposition approach based on DPM}\label{sec:domain_decomposition}

Note that, the most computationally expensive step in our algorithm proposed in Section~\ref{par:main_algorithm_dpm} for a single spherical domain is \textit{Step 4b}, where we need to recompute the BEP \eqref{eqn:BEP_recast} by solving the APs as many times as the total number of basis functions. In particular, we need to recompute BEP~\eqref{eqn:BEP_recast} at every time level near blow-up time in \textit{Step 4b}, when the time step $\Delta t$ decreases due to the CFL-type condition~\eqref{eqn:cfl_condition}. In addition, as usual with any numerical algorithm, the computational cost of the entire algorithm increases significantly with global mesh refinement in 3D. However, the considered solutions ($\rho$ and $c$) to chemotaxis model \eqref{eqn:minimal_chemotaxis} have a compact support, which means global mesh refinement introduces unnecessary computation. Hence, to develop a more efficient scheme for chemotaxis models in 3D, we introduce the adaptivity in space that is compatible with Fast Poisson Solvers for the AP \eqref{eqn:discrete_ap} and \eqref{eqn:discrete_ap_boundary}.

To this end, we consider a domain decomposition approach based on DPM. Our proposed domain decomposition algorithm follows and extends the numerical algorithms for interface problems in 2D \cite{Albright_2017_elliptic,MR3659255,MR3817808}. In the domain decomposition approach, we decompose the spherical domain $\Omega$ into two non-intersecting sub-domains $\Omega_1$ and $\Omega_2$. Next, similarly as in Section~\ref{sec:dpm_sd}, we introduce computationally simple auxiliary domains $\Omega^0_\ell$ (cubic) and embed each sub-domain $\Omega_\ell$ into its corresponding auxiliary domain ($\ell=1,2$). We assume the large values or the non-smooth parts of the solutions to chemotaxis system~\eqref{eqn:minimal_chemotaxis} near blow-up are located in sub-domain $\Omega_1$ at any given time. Next, we discretize each auxiliary domain $\Omega^0_\ell$ using uniform Cartesian meshes of dimension $N_\ell\times N_\ell\times N_\ell$ and grid size $h_\ell$ ($\ell=1,2$).

\begin{remark}
By our assumption, the solutions in sub-domain $\Omega_2$ are smooth and changing slowly. Hence, we can introduce mesh adaptivity in space and use a much coarser mesh for sub-domain $\Omega_2$ without loss of global accuracy. Essentially, in contrast to the single domain approach, the domain decomposition approach reduces the degrees of freedom significantly while maintaining similar accuracy, as we will illustrate using numerical examples in Section~\ref{sec:numerical_results}.
\end{remark}

\begin{figure}
    \centering
    \begin{subfigure}{0.4\textwidth}
        \centering
        \includegraphics[width=\textwidth]{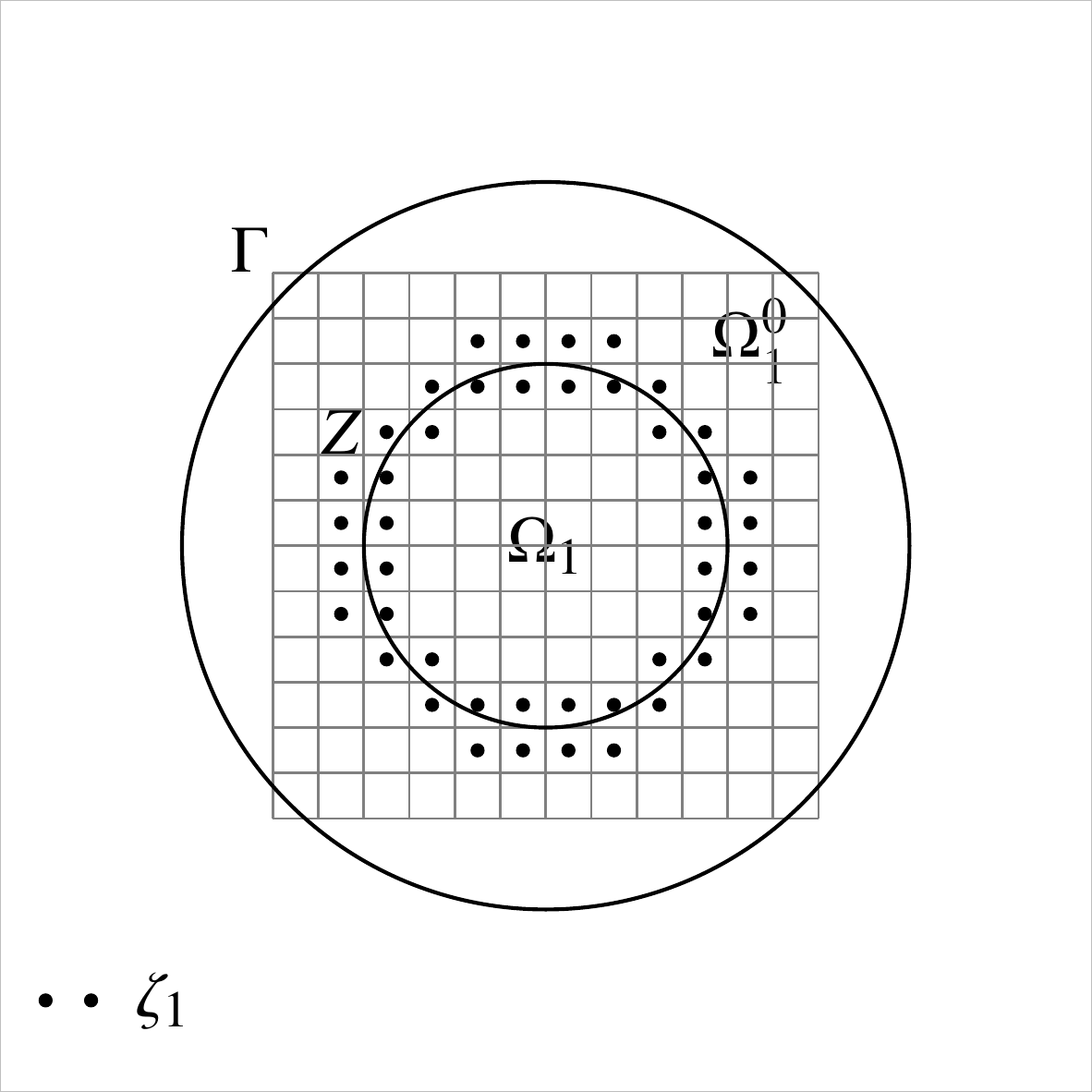}
        \caption{}
        \label{fig:DD1_Domain1}
    \end{subfigure}%
    ~
    \begin{subfigure}{0.4\textwidth}
        \centering
        \includegraphics[width=\textwidth]{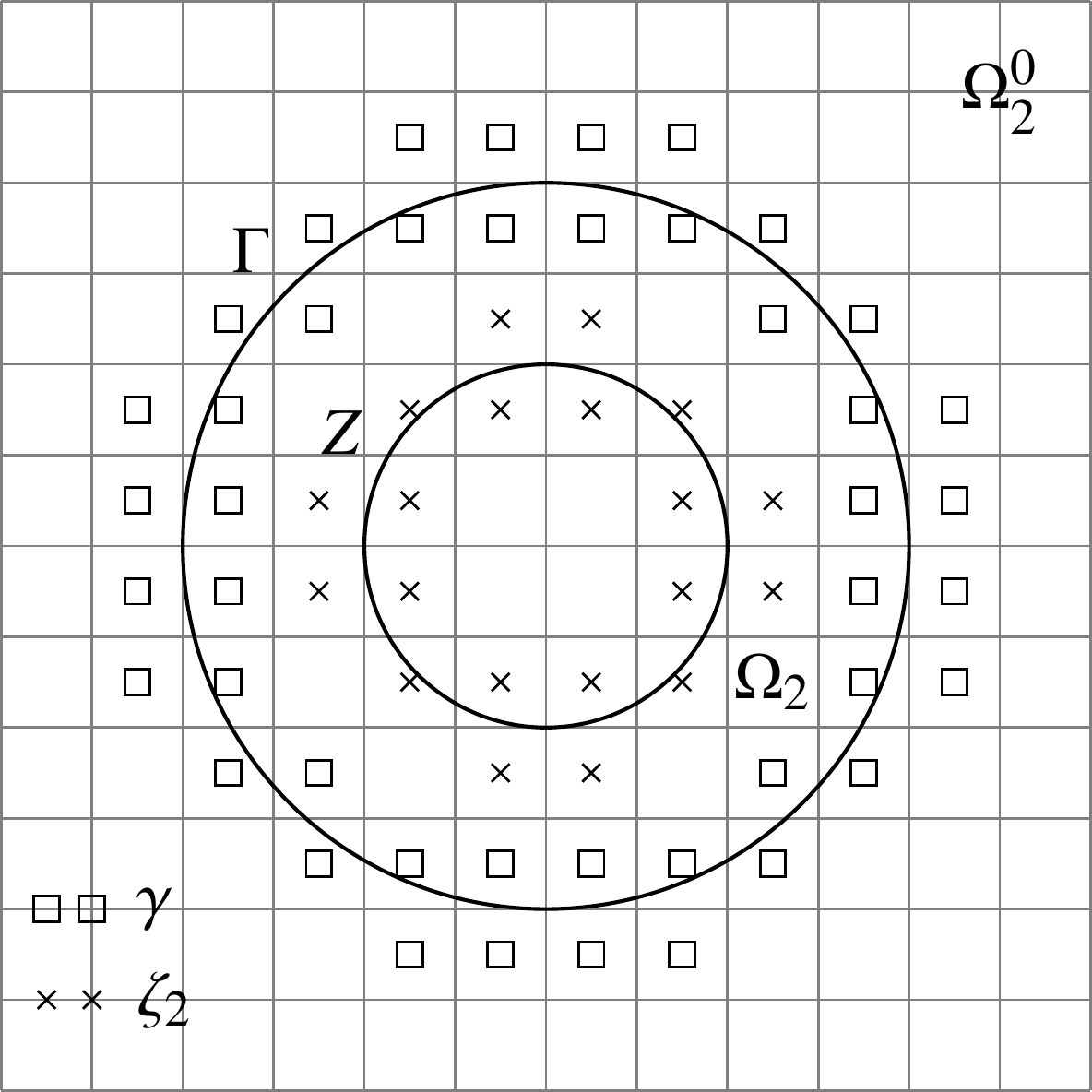}
        \caption{}
        \label{fig:DD1_Domain2}
    \end{subfigure}
    \caption{Example of discrete grid boundary/interface for Case 1 ($Z\cap\Gamma=\varnothing$) in the cross-sectional view: (a) the discrete grid interface $\zeta_1$ (solid dots along continuous interface $Z$) for sub-domain $\Omega_1$; and (b) the discrete grid boundary $\gamma$ (open squares along continuous boundary $\Gamma$ of the spherical domain $\Omega$) and the discrete grid interface $\zeta_2$ (cross dots along continuous interface $Z$), for sub-domain $\Omega_2$.}\label{fig:DD1_2D_example}
\end{figure}

We denote the artificial interface between the two sub-domains as $Z$. In this work, we consider the following two types of interfaces: $Z\cap\Gamma=\varnothing$ (Fig.~\ref{fig:DD1_2D_example}) and $Z\cap\Gamma\neq\varnothing$ (Fig.~\ref{fig:DD2_2D_example}), where $\Gamma:=\partial\Omega$ is the continuous boundary of the original spherical domain $\Omega$. Therefore, across the artificial interface $Z$, we impose the continuous interface conditions:
\begin{align}\label{eqn:interface_condition}
\left.\frac{\partial^ku^{i+1}_1}{\partial n^k}\right|_{Z}=\left.\frac{\partial^ku^{i+1}_2}{\partial n^k}\right|_{Z},\quad k=0,1,2,\dots
\end{align}
where $\frac{\partial^ku^{i+1}_\ell}{\partial n^k}$ denotes the $k$-th order normal derivative of $u^{i+1}_{\ell}$ at time $t^{i+1}$, $n$ denotes the unit normal vector on the interface $Z$, and $u^{i+1}_{\ell} :=\bar{\rho}^{i+1}_{\ell}$ or $u^{i+1}_{\ell} :=c^{i+1}_{\ell}$ in sub-domain $\Omega_\ell$ ($\ell=1,2$).

After we decompose the spherical domain $\Omega$ into two sub-domains, we proceed in each sub-domain as in the single domain approach (see Section \ref{sec:dpm_sd}). Again, we define the point sets $N^0_\ell,M^0_\ell,N^\pm_\ell,M^\pm_\ell$ $(\ell=1,2)$ as in Definition~\ref{def:point_sets} for each $\Omega_\ell$ and $\Omega^0_\ell$ ($\ell=1,2$). Next, we define the \textit{discrete grid boundary} along the continuous boundary $\Gamma$ and the \textit{discrete grid interface} along the continuous interface $Z$ in two cases respectively:

\emph{Case 1 of domain decomposition (suitable for the blow-up at the center of the domain)}: $Z\cap\Gamma=\varnothing$. The \textit{discrete grid boundary} $\gamma$ for sub-domain $\Omega_2$ (Fig.~\ref{fig:DD1_Domain2}) is defined as the intersection of $N^+_2$ and $N^-_2$ within a small neighborhood of the boundary $\Gamma$. The \textit{discrete grid interfaces} $\zeta_\ell$ are defined as the intersection of $N^+_\ell$ and $N^-_{\ell}$ ($\ell=1,2$) within a small neighborhood of the continuous interface $Z$ for sub-domain $\Omega_\ell$ ($\ell=1,2$). See Fig.~\ref{fig:DD1_2D_example} for example of $\gamma,\zeta_1,\zeta_2$ in cross-sectional view.

\emph{Case 2 of domain decomposition (suitable for the blow-up at the boundary of the domain)}: $Z\cap\Gamma\neq\varnothing$. In this case, we take the \textit{discrete grid boundary} $\gamma_1$ for sub-domain $\Omega_1$ to illustrate how to define the discrete grid boundary and the discrete grid interface. First,  define the ``discrete grid boundary'' $\gamma$ for the whole boundary $\Gamma$ (similarly to Case 1). Next, include a point  $p\in\gamma$ into the \textit{discrete grid boundary} $\gamma_1$ for sub-domain $\Omega_1$, if the polar angle of the orthogonal projection of $p$ on the continuous boundary $\Gamma$ belongs to $[0, \theta^*+\epsilon]$. Here, $\theta^*$ is the polar angle at the intersection points of $\Gamma$ and $Z$ and the tolerance $\epsilon$ is introduced to include extra layer of points near the ``wedge''. The discrete grid boundary $\gamma_2$ and the discrete grid interfaces $\zeta_1$ and $\zeta_2$ are defined similarly. See Fig.~\ref{fig:DD2_2D_example} for example of $\gamma_1,\gamma_2,\zeta_1,\zeta_2$ for geometry with wedges in cross-sectional view.

\begin{figure}
    \centering
    \begin{subfigure}{0.4\textwidth}
        \centering
        \includegraphics[width=\textwidth]{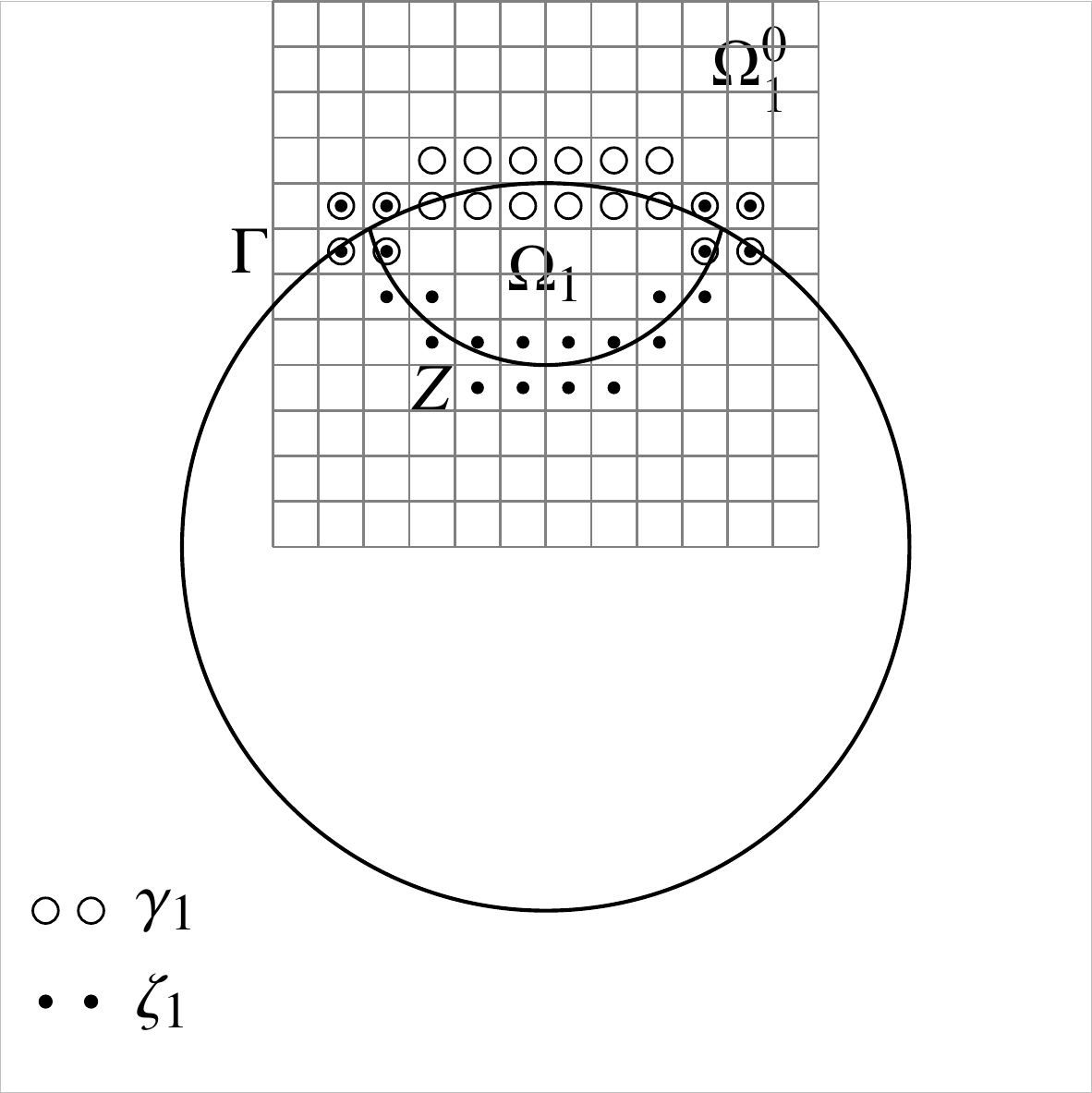}
        \caption{}
    \end{subfigure}
    ~
    \begin{subfigure}{0.4\textwidth}
        \centering
        \includegraphics[width=\textwidth]{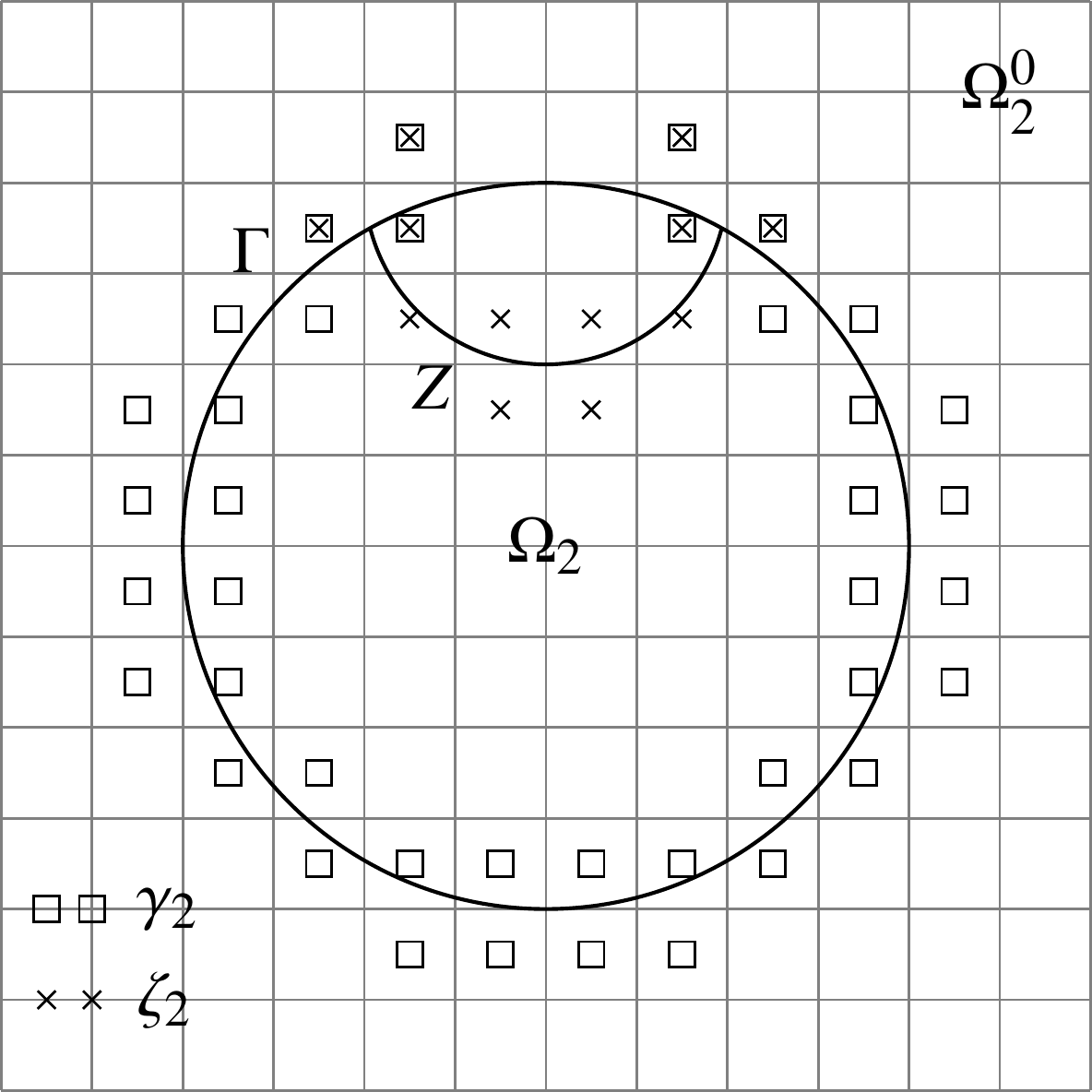}
        \caption{}
    \end{subfigure}
    \caption{Example of discrete grid boundary/interface for Case 2 ($Z\cap\Gamma\neq\varnothing$) in the cross-sectional view: (a) discrete grid boundary $\gamma_1$ (open circles along continuous boundary $\Gamma$ of the spherical domain) and discrete grid interface $\zeta_1$ (solid dots along continuous interface $Z$). Solid dots in open circles show the intersection of $\gamma_1$ and $\zeta_1$ sets; and (b) discrete grid boundary $\gamma_2$ (open squares along continuous boundary $\Gamma$ of the spherical domain) and discrete grid interface $\zeta_2$ (cross dots along continuous interface $Z$). Crosses in squares show the intersection of $\gamma_2$ and $\zeta_2$ sets.}\label{fig:DD2_2D_example}
\end{figure}

%%%%%%%%%%%%%%%%%%%%%%%%%%%%%%%%%%%%%%%%%%%%%%%%%%%%%%%%%%%%%%%%%%%%%%%%%%%%%%%%%%%%%%%%%%%%%%%%%%%%%%%%%%%%%%%%%%%%%%%%%%%%%%

\paragraph{BEPs for Case 1: $Z\cap\Gamma=\varnothing$.} Note that, $\zeta_1$ is the ``discrete grid boundary'' for sub-domain $\Omega_1$ and the union $\gamma\cup\zeta_2$ constitutes the ``discrete grid boundary'' for sub-domain $\Omega_2$ (see Fig.~\ref{fig:DD1_2D_example}). Next, we treat each sub-domain as a single domain and recall Theorem~\ref{thm:full_BEP} to construct BEP \eqref{eqn:DD1_BEP1} for sub-domain $\Omega_1$ and BEP~\eqref{eqn:DD1_BEP2} for sub-domain $\Omega_2$:
\begin{align}
u^{i+1}_{1,\zeta_1}-P_{\zeta_1}u^{i+1}_{1,\zeta_1}&=G_{h_1,\Delta t}f^i_{1,\zeta_1},\label{eqn:DD1_BEP1}\\
\left(
\begin{array}{cc}
u^{i+1}_{2,\gamma}\\
u^{i+1}_{2,\zeta_2}
\end{array}
\right)
-P_{\gamma\cup\zeta_2}\left(
\begin{array}{cc}
u^{i+1}_{2,\gamma}\\
u^{i+1}_{2,\zeta_2}
\end{array}
\right)
&=\left(
\begin{array}{cc}
G_{h_2,\Delta t}f^i_{2,\gamma}\\
G_{h_2,\Delta t}f^i_{2,\zeta_2}
\end{array}
\right),\label{eqn:DD1_BEP2}
\end{align}
where the projection operators are defined as $P_{\zeta_1}:=Tr_{\zeta_1}P_{N^+_1\zeta_1}$ and $P_{\gamma\cup\zeta_2}:=Tr_{\gamma\cup\zeta_2}P_{N^+_2(\gamma\cup\zeta_2)}$. As discussed in Section~\ref{sec:dpm_sd} and for efficiency of our algorithms, we reduce the above coupled BEPs \eqref{eqn:DD1_BEP1} and \eqref{eqn:DD1_BEP2} to the interior of discrete grid boundary $\gamma_{in}$ and the discrete grid interface $\zeta_{1,in}$ and $\zeta_{2,in}$:
\begin{align}
u^{i+1}_{1,\zeta_{1,in}}-Tr_{\zeta_{1,in}}P_{\zeta_1}u^{i+1}_{1,\zeta_1}&=Tr_{\zeta_{1,in}}G_{h_1,\Delta t}f^i_{1,\zeta_1} \mbox{ on }\zeta_{1,in},\label{eqn:reduced_DD1_BEP1}\\
\left(
\begin{array}{cc}
u^{i+1}_{2,\gamma_{in}}\\
u^{i+1}_{2,\zeta_{2,in}}
\end{array}
\right)
-Tr_{\gamma_{in}\cup\zeta_{2,in}}P_{\gamma\cup\zeta_2}\left(
\begin{array}{cc}
u^{i+1}_{2,\gamma}\\
u^{i+1}_{2,\zeta_2}
\end{array}
\right)
&=\left(
\begin{array}{cc}
Tr_{\gamma_{in}}G_{h_2,\Delta t}f^i_{2,\gamma}\\
Tr_{\zeta_{2,in}}G_{h_2,\Delta t}f^i_{2,\zeta_2}
\end{array}
\right)\mbox{ on }\gamma_{in}\cup\zeta_{2,in}.\label{eqn:reduced_DD1_BEP2}
\end{align}
Next, we follow the single domain approach in Section~\ref{sec:dpm_sd} and supplement the BEPs \eqref{eqn:reduced_DD1_BEP1} and \eqref{eqn:reduced_DD1_BEP2} with extension operators and spectral approximations, to incorporate the zero Neumann boundary condition and the continuous interface condition~\eqref{eqn:interface_condition}. Then we solve the coupled BEPs \eqref{eqn:reduced_DD1_BEP1} and \eqref{eqn:reduced_DD1_BEP2} and obtain the densities $u^{i+1}_{\zeta_1}$, $u^{i+1}_{\gamma}$ and $u^{i+1}_{\zeta_2}$. Finally, the discrete solution $u^{i+1}_\ell$ in each sub-domain $\Omega_\ell$ $(\ell=1,2)$ at time level $t^{i+1}$ is constructed using the Discrete Generalized Green's formula \eqref{eqn:generalized_greens_formula} for each sub-domain.

%%%%%%%%%%%%%%%%%%%%%%%%%%%%%%%%%%%%%%%%%%%%%%%%%%%%%%%%%%%%%%%%%%%%%%%%%%%%%%%%%%%%%%%%%%%%%%%%%%%%%%%%%%%%%%%%%%

\paragraph{BEPs for Case 2: $Z\cap\Gamma\neq\varnothing$.} Similarly to Case 1, note that $\gamma_1$ and $\zeta_1$ constitute the ``discrete grid boundary'' for sub-domain $\Omega_1$, and $\gamma_2$ and $\zeta_2$ form the ``discrete grid boundary'' for sub-domain $\Omega_2$.
Next, we treat each sub-domain as a single domain and recall Theorem~\ref{thm:full_BEP} to formulate the BEP for sub-domain $\Omega_\ell$:
\begin{align}
u^{i+1}_{\gamma_\ell\vee\zeta_\ell}
-P_{\gamma_\ell\vee\zeta_\ell}u^{i+1}_{\gamma_\ell\vee\zeta_\ell}
&=\left(
\begin{array}{cc}
G_{h_\ell,\Delta t}f^i_{\ell,\gamma_\ell}\\
G_{h_\ell,\Delta t}f^i_{\ell,\zeta_\ell}
\end{array}
\right),\quad (\ell=1,2)\label{eqn:DD2_BEP}
\end{align}
where $u^{i+1}_{\gamma_\ell\vee\zeta_\ell}$ denotes the densities defined on both sets $\gamma_\ell$ and $\zeta_\ell$, and $P_{\gamma_\ell\vee\zeta_\ell}$ is defined as:
\begin{align}
P_{\gamma_\ell\vee\zeta_\ell}u^{i+1}_{\gamma_\ell\vee\zeta_\ell}:= Tr_{\gamma_\ell\vee\zeta_\ell}P_{N^+_\ell(\gamma_\ell\vee\zeta_\ell)}u^{i+1}_{\gamma_\ell\vee\zeta_\ell},\quad (\ell=1,2)
\end{align}

\begin{remark}
Note that, any point $p\in\gamma_\ell\cap\zeta_\ell$ contributes two values in the density $u^{i+1}_{\gamma_\ell\vee\zeta_\ell}$: one from density $u^{i+1}_{\gamma_\ell}$ and the other from density $u^{i+1}_{\zeta_\ell}$ ($\ell=1,2$). Next, we follow~\cite{Magura_2017} and take the average of the two values corresponding to point $p$ and assign the average to be the ``effective'' density at point $p$. The ``effective'' density will be used in computing the Difference Potentials $P_{N^+_\ell(\gamma_\ell\vee\zeta_\ell)}u_{\gamma_\ell\vee\zeta_\ell}$. Also, note that we impose two equations at point $p\in \gamma_\ell\cap\zeta_\ell$ in the BEPs \eqref{eqn:DD2_BEP}: one corresponds to $u^{i+1}_{\gamma_\ell}$ and the other for $u^{i+1}_{\zeta_\ell}$ ($\ell=1,2$).
\end{remark}

Again for efficiency of our algorithms, we reduce the coupled BEPs \eqref{eqn:DD2_BEP} to the interior of the discrete grid boundary $\gamma_{\ell,in}$ and the interior of the discrete grid interface $\zeta_{\ell,in}$:
\begin{align}
u^{i+1}_{\gamma_{\ell,in}\vee\zeta_{\ell,in}}
-Tr_{\gamma_{\ell,in}\vee\zeta_{\ell,in}}P_{\gamma_\ell\vee\zeta_\ell}u^{i+1}_{\gamma_\ell\vee\zeta_\ell}
&=\left(
\begin{array}{cc}
Tr_{\gamma_{\ell,in}}G_{h_\ell,\Delta t}f^i_{\ell,\gamma_\ell}\\
Tr_{\zeta_{\ell,in}}G_{h_\ell,\Delta t}f^i_{\ell,\zeta_\ell}
\end{array}
\right),\quad(\ell=1,2)\label{eqn:reduced_DD2_BEP}
\end{align}
on $\gamma_{\ell,in}$ and $\zeta_{\ell,in}$.

Similarly, we follow the single domain approach in Section~\ref{sec:dpm_sd} and supplement the BEPs \eqref{eqn:reduced_DD2_BEP} with the extension operators and spectral approximations, to incorporate the zero Neumann boundary condition and the continuous interface condition~\eqref{eqn:interface_condition}. After we solve the coupled BEPs \eqref{eqn:reduced_DD2_BEP} and obtain densities $u^{i+1}_{\gamma_1}$, $u^{i+1}_{\zeta_1}$, $u^{i+1}_{\gamma_2}$ and $u^{i+1}_{\zeta_2}$, the discrete solution $u^{i+1}_\ell$ in each sub-domain $\Omega_\ell$ $(\ell=1,2)$ at time level $t^{i+1}$ is constructed using the Discrete Generalized Green's formula \eqref{eqn:generalized_greens_formula} for each sub-domain.

\begin{remark}
(i) In both Case 1 and Case 2, the non-negativity of discrete solutions $u^{i+1}_1$ in sub-domain $\Omega_1$ and $u^{i+1}_2$ in sub-domain $\Omega_2$ follows from Theorem~\ref{thm:spherical_positivity}: $u^{i+1}_{1}$ and $u^{i+1}_{2}$ are non-negative, provided that Conditions~\ref{cond:previous_positivity}--\ref{cond:positive_gamma} in Theorem \ref{thm:spherical_positivity} are satisfied in each sub-domain $\Omega_\ell$ ($\ell=1,2$). Moreover, we expect to recover second order convergence in space and first order convergence in time in each sub-domain as in Proposition~\ref{prop:discrete_gene_green_formula}, which indeed we observe in the numerical tests (see Section~\ref{sec:numerical_results}).\\
(ii) In addition, note that, in general, the major steps of the proposed domain decomposition algorithms will not change with a choice of subdomains $\Omega_1$ and $\Omega_2$. In the considered numerical examples in Section~\ref{sec:numerical_results}, we selected $\Omega_1$ and $\Omega_2$ for the domain decomposition approach to ensure similar accuracy with the single domain approach at a less computational cost. Furthermore, the developed domain decomposition approach handles non-trivial geometries of the subdomains by the use of simple Cartesian meshes that do not match at the interface.
\end{remark}

%%%%%%%%%%%%%%%%%%%%%%%%%%%%%%%%%%%%%%%%%%%%%%%%%%%%%%%%%%%%%%%%%%%%%%%%%%%%%%%%%%%%%%%%%%%%%%%%%%%%%%%%
% Numerical Results
%%%%%%%%%%%%%%%%%%%%%%%%%%%%%%%%%%%%%%%%%%%%%%%%%%%%%%%%%%%%%%%%%%%%%%%%%%%%%%%%%%%%%%%%%%%%%%%%%%%%%%%%

\section{Numerical Results}\label{sec:numerical_results}

Throughout this section, we assume the single spherical domain $\Omega$ is centered at the origin and is of radius $r=0.5$. Also, to illustrate the idea of domain decomposition approach, we assume the sub-domains $\Omega_1$ and $\Omega_2$ are non-intersecting parts of the spherical domain, and $\overline{\Omega}=\overline{\Omega}_1\cup\overline{\Omega}_2$ with $r_1=0.25$ for sub-domain $\Omega_1$ and $r_2=0.5$ for sub-domain $\Omega_2$ for both Cases 1 and 2 (see Section~\ref{sec:domain_decomposition}). We also assume $\chi=1$ in \eqref{eqn:minimal_chemotaxis} in our numerical tests.

%%%%%%%%%%%%%%%%%%%%%%%%%%%%%%%%%%%%%%%%%%%%%%%%%%%%%%%%%%%%%%%%%%%%%%%%%%%%%%%%%%%%%%%%%%%%%%%%

\paragraph{Initial Conditions.} We will study the following two examples of test problems to demonstrate the accuracy, efficiency and robustness of our proposed algorithms:
\begin{itemize}
  \item Initial conditions of $\rho$ and $c$ that will give blow-up at the center of the spherical domain:
  \begin{equation}\tag{A}\label{test:blowup_at_center}
  \begin{aligned}
  \rho_0&=1000e^{-100(x^2+y^2+z^2)},\\
  c_0&=500e^{-50(x^2+y^2+z^2)}.
  \end{aligned}
  \end{equation}
  \item Initial condition of $\rho$ and $c$ that will give blow-up at the boundary of the spherical domain:
  \begin{equation}\tag{B}\label{test:blowup_at_boundary}
  \begin{aligned}
  \rho_0&=2000e^{-100(x^2+y^2+(z-0.25)^2)},\\
  c_0&=0.
  \end{aligned}
  \end{equation}
\end{itemize}

\begin{remark}
Test~\eqref{test:blowup_at_center} categorizes the typical radial symmetric initial condition in a radially symmetric domain, which leads to blow-up at the center. Test~\eqref{test:blowup_at_boundary} is more challenging in the following sense: (i) the evolution of the density takes longer to develop into almost singular solutions, which requires an efficient time stepping strategy to handle the longer-time simulations; (ii) the time step $\Delta t$ decreases due to CFL-type condition~\eqref{eqn:cfl_condition} near blow-up time; (iii) the blow-up occurs at the boundary of the irregular domain, which is more difficult to resolve, compared to Test~\eqref{test:blowup_at_center}.
\end{remark}

%%%%%%%%%%%%%%%%%%%%%%%%%%%%%%%%%%%%%%%%%%%%%%%%%%%%%%%%%%%%%%%%%%%%%%%%%%%%%%%%%%%%%%%%%%%%%%%%%%%%%%%

\paragraph{Choice of Meshes.}  We illustrate the choices of uniform Cartesian meshes using the single domain approach, and the domain decomposition approach follows similarly. In the single domain approach, we take the auxiliary domain to be a cubic domain: $[-r-2h,r+2h]\times[-r-2h,r+2h]\times[-r-2h,r+2h]$. Here $h:=2r/(N-4)$, where $r$ is the radius of the spherical domain and $N$ denotes that the cubic auxiliary domain is discretized by mesh of dimension $N\times N\times N$, and $N$ is the number of cells in $x$ (or $y$, $z$) direction. Note that, the choice of auxiliary domain here is only a few layers larger than the spherical domain $\Omega$ to have efficient computations.

Next we introduce the notations for meshes in the single domain (SD) and in the domain decomposition (DD) approaches:
\begin{itemize}
    \item ``$N$'' denotes that the mesh for $\Omega$ in the SD approach is dimension $N\times N\times N$ cells, with grid size $h=2r/(N-4)$;
    \item ``$N_1/N_2$'' denotes that the meshes for sub-domains $\Omega_1$ and $\Omega_2$ in the DD approach are dimension $N_1\times N_1\times N_1$ cells and $N_2\times N_2\times N_2$ cells respectively, with $h_1=2r_1/(N_1-4)$ for the sub-domain $\Omega_1$ and $h_2=2r_2/(N_2-4)$ for the sub-domain $\Omega_2$.
\end{itemize}
These choices of uniform Cartesian meshes are key to using Fast Poisson Solvers in 3D. We employed Fast Poisson Solver based on FFTW3 library~\cite{Frigo_1999} with openMP which enabled for better parallel efficiency. For other choices and comparison of high performance Fast Poisson Solvers, see \cite{Gholami_2016}.

%%%%%%%%%%%%%%%%%%%%%%%%%%%%%%%%%%%%%%%%%%%%%%%%%%%%%%%%%%%%%%%%%%%%%%%%%%%%%%%%%%%%%%%%%%%%%%%%%%%%%%%%%%%%%%%

\paragraph{Basis functions, extension operators and BEPs.} As mentioned in Section~\ref{sec:dpm_sd}, the basis functions are selected to be the zonal spherical harmonics in the form of:
\begin{align}
\phi_{\nu}(\theta,\varphi) = P^0_{\nu}(\cos\theta),\quad \nu=0,1,\dots,M.
\end{align}
We use a different number of zonal harmonics in the spectral approximations along the boundary and along the interface for Test~\eqref{test:blowup_at_center} and Test~\eqref{test:blowup_at_boundary}. For Test~\eqref{test:blowup_at_center}, only 1 zonal harmonic is needed for each term in the 3-term extension operator along the boundary or the interface, due to: (i) the radial symmetry of the initial data and the solutions at later time levels; and (ii) the symmetry of the spherical domain. 

For Test~\eqref{test:blowup_at_boundary}, we use 20 harmonics for each term in the 3-term extension operator along the continuous interface $Z$, since we expect the solutions to be smooth along the interface $Z$ at any time. However, we vary the number of harmonics used for each term in the 2-term extension operator along the boundary $\Gamma$ in both the SD and DD approaches, and the number of harmonics used will be specified in the numerical results. The 2-term extension operators are used along the boundary for Test~\eqref{test:blowup_at_boundary} since solution $\rho$ loses regularity at the boundary due to blow-up at the boundary.

We should also note that all results from the SD approach are obtained using the reduced BEP~\eqref{eqn:reduced_BEP} and results from the DD approach are obtained using the reduced BEPs~\eqref{eqn:reduced_DD1_BEP1}--\eqref{eqn:reduced_DD1_BEP2} or \eqref{eqn:reduced_DD2_BEP} with $\ell=1,2$.

%%%%%%%%%%%%%%%%%%%%%%%%%%%%%%%%%%%%%%%%%%%%%%%%%%%%%%%%%%%%%%%%%%%%%%%%%%%%%%%%%%%%%%%%%%%%%%%%%%%%%%%

\subsection{Comparisons between single domain (SD) and domain decomposition (DD) approaches}
In this subsection, we compare and contrast the numerical results obtained from the SD approach and the DD approach using Test~\eqref{test:blowup_at_center} in the following aspects:
\begin{itemize}
    \item Convergence in space: Since there is no exact solution to the chemotaxis system~\eqref{eqn:minimal_chemotaxis}, we use the discrete solutions on a finer mesh to construct the reference solutions. The $\infty$-norm error for the density $\rho$ in space is computed as:
    \begin{align}
        E_{\infty}&=\max_{j,k,l}\left|\mathds{1}_{M^+}(x_j,y_k,z_l)(\bar{\rho}^i_h-\bar{\rho}^i_{h^*})\right|,\quad \mbox{(SD)}\label{eqn:sd-error-in-max-norm}\\
        E_{\infty}&=\max_{j,k,l}\left|\mathds{1}_{M^+_\ell}(x_j,y_k,z_l)(\bar{\rho}^i_{h_\ell}-\bar{\rho}^i_{h^*_\ell})\right|,\quad \ell=1,2 \quad\mbox{(DD)}\label{eqn:dd-error-in-max-norm}
    \end{align}
    where $\bar{\rho}^i_{h^*}$ and $\bar{\rho}^i_{h^*_\ell}$ ($\ell=1,2$) are reference solutions with grid sizes $h^*$ and $h^*_\ell$ in the SD and DD approaches respectively, and the function $\mathds{1}_{S}$ denotes the characteristic function for a point set $S$:
    \begin{align}\label{fcn:Mplus}
        \mathds{1}_{S}(x_j,y_k,z_l)=\left\{
        \begin{array}{ll}
        1,&\quad (x_j,y_k,z_l)\in S,\\
        0,&\quad (x_j,y_k,z_l)\notin S.
        \end{array}\right.
    \end{align}
    Here $S=M^+$ in the SD approach, or $S=M^+_\ell$ ($\ell=1,2$) in the DD approach. The $\infty$-norm error of the chemoattractant concentration $c$ in space for the SD or DD approach is computed similarly.

    \item $L^2$-norm of the relative error in time of the maximum value of the density $\rho$:
    \begin{align}
        E_{rel}&=\dfrac{\left(\sum_{i=1}^{N_T}(||\bar{\rho}^i_h||_{\infty}-||\bar{\rho}^i_{h^*}||_{\infty})^2\Delta t\right)^{1/2}}{\left(\sum_{i=1}^{N_T}(||\bar{\rho}^i_{h^*}||_{\infty})^2\Delta t\right)^{1/2}},\quad \mbox{(SD)}\label{eqn:sd-conv-max-density-in-space}\\
        E_{rel}&=\dfrac{\left(\sum_{i=1}^{N_T}(\max\limits_{\ell=1,2}\{||\bar{\rho}^i_{h_\ell}||_{\infty}\}-\max\limits_{\ell=1,2}\{||\bar{\rho}^i_{h^*_\ell}||_{\infty}\})^2\Delta t\right)^{1/2}}{\left(\sum_{i=1}^{N_T}(\max\limits_{\ell=1,2}\{||\bar{\rho}^i_{h^*_\ell}||_{\infty}\})^2\Delta t\right)^{1/2}},\quad \mbox{(DD)}\label{eqn:dd-conv-max-density-in-space}
    \end{align}
    where $N_T$ is the total number of time steps taken, and  $||\cdot||_{\infty}$ is the maximum value of $\rho$ in space at time $t^i$. Note that, the time steps $\Delta t$ in \eqref{eqn:sd-conv-max-density-in-space} and \eqref{eqn:dd-conv-max-density-in-space} are set to be the same in our numerical results, see also Table~\ref{table:convergence-rho_max-in-space}.

    \item Convergence in time:
    We test convergence in time by fixing the meshes and refining the time steps: $\Delta t=\Delta T/\tau$, where $\Delta T$ is a constant chosen to be $10^{-7}$ and $\tau$ is a multiple of 2. Then, the error of the density $\rho$ at the final time $t^i=10^{-6}$ is computed as:
    \begin{align}
        E_t&=\max_{j,k,l}\left|\mathds{1}_{M^+}(x_j,y_k,z_l)(\bar{\rho}^i_{\Delta t}-\bar{\rho}^i_{\Delta t*})\right|,\quad \mbox{(SD)}\label{eqn:sd-conv-in-time}\\
        E_t&=\max_{j,k,l}\left|\mathds{1}_{M_\ell^+}(x_j,y_k,z_l)(\bar{\rho}^i_{\ell,\Delta t}-\bar{\rho}^i_{\ell,\Delta t*})\right|,\quad\ell=1,2\quad \mbox{(DD)}\label{eqn:dd-conv-in-time}
    \end{align}
    where $\bar{\rho}^i_{\Delta t*}$ and $\bar{\rho}^i_{\ell,\Delta t*}$ are the reference solutions at the same final time with the time step $\Delta t^*$ in the SD and DD approaches respectively. The convergence in time for the concentration $c$ at the final time $t^{i}=10^{-6}$ for the SD or the DD approach is computed similarly.

    \item Max density at time $t^{i}$:
    \begin{align}
        ||\bar{\rho}^i||_{\infty}&=\max_{j,k,l} \left|\mathds{1}_{M^+}(x_j,y_k,z_l)\bar{\rho}^i_{j,k,l}\right|,\quad \mbox{(SD)}\label{sd-norm:max}\\
        ||\bar{\rho}^i||_{\infty}&=\max_{\ell=1,2}\max_{j,k,l} \left|\mathds{1}_{M^+_\ell}(x_j,y_k,z_l)\{\bar{\rho}^i_{\ell}\}_{j,k,l}\right|, \quad \mbox{(DD)}\label{dd-norm:max}
    \end{align}
    which we will use to approximate the blow-up time.

    \item Second moment of $\rho$ at time $t^{i}$ (see \cite{2017arXiv170903931J}):
    \begin{align}
        M^i_h&=\sum_{j,k,l=1}^N\mathds{1}_{M^+}(x_j,y_k,z_l)h^3(x_j^2+y_k^2+z_l^2)\bar{\rho}^i_{j,k,l},\quad\mbox{(SD)}\label{sd-norm:second-moment}\\
        M^i_h&=\sum_{\ell=1}^2\sum_{j,k,l=1}^{N_\ell}\mathds{1}_{M^+_\ell}(x_j,y_k,z_l)h^3_\ell(x_j^2+y_k^2+z_l^2)\{\bar{\rho}^i_\ell\}_{j,k,l} \quad\mbox{(DD)}\label{dd-norm:second-moment}
    \end{align}

    \item Free energy (see \cite{2017arXiv170903931J,Per}) of the chemotactic system \eqref{eqn:minimal_chemotaxis} at time $t^{i}$:
    \begin{align}
        \mathcal{E}^i_h&=\sum_{j,k,l=1}^N\mathds{1}_{M^+}h^3\Big\{\bar{\rho}^i_{j,k,l}\ln\bar{\rho}^i_{j,k,l}-\bar{\rho}^i_{j,k,l}c^i_{j,k,l}+\frac{1}{2}(c^i_{j,k,l})^2\nonumber\\
        &\quad+\frac{1}{8h^2}\big[(c^i_{j+1,k,l}-c^i_{j-1,k,l})^2+(c^i_{j,k+1,l}-c^i_{j,k-1,l})^2\nonumber\\
        &\quad+(c^i_{j,k,l+1}-c^i_{j,k,l-1})^2\big]\Big\},\quad \mbox{(SD)}\label{sd-norm:free-energy}\\
        \mathcal{E}^i_h&=\sum_{\ell=1}^2\sum_{j,k,l=1}^{N_\ell}\mathds{1}_{M^+_\ell}h_\ell^3\Big\{\{\bar{\rho}^i_\ell\}_{j,k,l}\ln\{\bar{\rho}^i_\ell\}_{j,k,l}-\{\bar{\rho}^i_\ell\}_{j,k,l}\{c^i_\ell\}_{j,k,l}+\frac{1}{2}(\{c^i_\ell\}_{j,k,l})^2\nonumber\\
        &\quad+\frac{1}{8h_\ell^2}\big[(\{c^i_\ell\}_{j+1,k,l}-\{c^i_\ell\}_{j-1,k,l})^2+(\{c^i_\ell\}_{j,k+1,l}-\{c^i_\ell\}_{j,k-1,l})^2\nonumber\\
        &\quad+(\{c^i_\ell\}_{j,k,l+1}-\{c^i_\ell\}_{j,k,l-1})^2\big]\Big\} \quad \mbox{(DD)}\label{dd-norm:free-energy}
    \end{align}
    In particular, we expect to see a decrease in free energy over time according to the second law of thermodynamics.

\end{itemize}

%%%%%%%%%%%%%%%%%%%%%%%%%%%%%%%%%%%%%%%%%%%%%%%%%%%%%%%%%%%%%%%%%%%%%%%%%%%%%%%%%%%%%%%%%%%%%%%%%%%%%%%%%%

\subsubsection{Comparison of the convergence at time $10^{-6}$}
In this subsection, we present the comparison of convergence studies both in space and in time for the SD and the DD approaches using Test \eqref{test:blowup_at_center}. The final time for the convergence test is set to be $10^{-6}$ so that (i) the solutions $\rho$ and $c$ are sufficiently smooth; and (ii) the reference solution on the finest mesh $516\times516\times516$ in the SD approach can be obtained within a reasonable wall-clock time.

\begin{remark}
Note that the single domain approach is more computationally expensive than the domain decomposition approach.
\end{remark}

\paragraph{Convergence in Space.}

%%%%%%%%%%%%%%%%%%%%%%%%%%%%%%%%%%%%%%%%%%%%%%%%%%%%%%%%%%%%%%%%%%%%%%%%%%%%%%%%%%%%%%%%%%%%%%%%
% max norm
%%%%%%%%%%%%%%%%%%%%%%%%%%%%%%%%%%%%%%%%%%%%%%%%%%%%%%%%%%%%%%%%%%%%%%%%%%%%%%%%%%%%%%%%%%%%%%%
\begin{table}
\caption{Comparison of the errors in space (as computed in~\eqref{eqn:sd-error-in-max-norm} and~\eqref{eqn:dd-error-in-max-norm}) and the order of the convergence between SD and DD approaches to approximate $\rho$ in Test~\eqref{test:blowup_at_center} at time $10^{-6}$, with fixed time step $\Delta t=10^{-8}$. The reference solutions in both SD and DD approaches are obtained using the discrete solution on mesh $516\times516\times516$ in the SD approach.}
\label{table:convergence-rho-max-norm}
\centering
\footnotesize
\sisetup{
  output-exponent-marker = \text{ E},
  exponent-product={},
  retain-explicit-plus
}
\begin{tabular}{c S[table-format=1.4e2] c c S[table-format=1.4e2] c S[table-format=1.4e2] c}
\toprule
\multicolumn{3}{c}{Single Domain} & \multicolumn{5}{c}{Domain Decomposition ($h_1/h_2=1/2$)} \\
\cmidrule(lr){1-3}\cmidrule(lr){4-8}
{$N$} & {$E_\infty$: $\Omega$} & {Rate: $\Omega$} & {$N_1/N_2$} & {$E_{\infty}$: $\Omega_1$} & {Rate: $\Omega_1$} & {$E_{\infty}$: $\Omega_2$} & {Rate: $\Omega_2$} \\
\cmidrule(lr){1-3}\cmidrule(lr){4-8}
 36 & 1.4046e+00 & \textemdash & 20/20   & 1.4046e+00 & \textemdash & 3.5954e-02 & \textemdash \\
 68 & 3.6990e-01 & 1.93        & 36/36   & 3.6990e-01 & 1.93        & 8.3515e-03 & 2.11 \\
132 & 1.2224e-01 & 1.60        & 68/68   & 1.2224e-01 & 1.60        & 2.6776e-03 & 1.64 \\
260 & 2.7815e-02 & 2.14        & 132/132 & 2.7815e-02 & 2.14        & 7.7097e-04 & 1.80 \\
\cmidrule(lr){1-3}\cmidrule(lr){4-8}
\multicolumn{3}{c}{} & \multicolumn{5}{c}{Domain Decomposition ($h_1/h_2=1/4$)} \\
\cmidrule(lr){4-8}
      &     &                  & {$N_1/N_2$} & {$E_\infty$: $\Omega_1$} & {Rate: $\Omega_1$} & {$E_\infty$: $\Omega_2$} & {Rate: $\Omega_2$} \\
\cmidrule(lr){4-8}
    &            &             & 20/12   & 1.4046e+00 & \textemdash & 1.1226e-01 & \textemdash \\
    &            &             & 36/20   & 3.6990e-01 & 1.93        & 3.7364e-02 & 1.59 \\
    &            &             & 68/36   & 1.2224e-01 & 1.60        & 1.0093e-02 & 1.89 \\
    &            &             & 132/68  & 2.7815e-02 & 2.14        & 3.2778e-03 & 1.62 \\
\bottomrule
\end{tabular}
\end{table}

\begin{table}
\caption{Comparison of the errors in space (computed similarly to~\eqref{eqn:sd-error-in-max-norm} and~\eqref{eqn:dd-error-in-max-norm}) and the order of the convergence between SD and DD approaches to approximate $c$ in Test~\eqref{test:blowup_at_center} at time $10^{-6}$, with fixed time step $\Delta t=10^{-8}$. The reference solutions in both SD and DD approaches are obtained using the discrete solution on mesh $516\times516\times516$ in the SD approach.}
\label{table:convergence-c-max-norm}
\centering
\footnotesize
\sisetup{
  output-exponent-marker = \text{ E},
  exponent-product={},
  retain-explicit-plus
}
\begin{tabular}{c S[table-format=1.4e2] c c S[table-format=1.4e2] c S[table-format=1.4e2] c}
\toprule
\multicolumn{3}{c}{Single Domain} & \multicolumn{5}{c}{Domain Decomposition ($h_1/h_2=1/2$)} \\
\cmidrule(lr){1-3}\cmidrule(lr){4-8}
{$N$} & {$E_\infty$: $\Omega$} & {Rate: $\Omega$} & {$N_1/N_2$} & {$E_\infty$: $\Omega_1$} & {Rate: $\Omega_1$} & {$E_\infty$: $\Omega_2$} & {Rate: $\Omega_2$} \\
\cmidrule(lr){1-3}\cmidrule(lr){4-8}
 36 & 5.6759e+00 & \textemdash & 20/20   & 5.6759e+00 & \textemdash & 1.0804e+00 & \textemdash \\
 68 & 1.4766e+00 & 1.94        & 36/36   & 1.4766e+00 & 1.94        & 2.8497e-01 & 1.92 \\
132 & 3.5615e-01 & 2.05        & 68/68   & 3.5615e-01 & 2.05        & 7.1525e-02 & 1.99 \\
260 & 7.1459e-02 & 2.32        & 132/132 & 7.1459e-02 & 2.32        & 1.7233e-02 & 2.05 \\
\cmidrule(lr){1-3}\cmidrule(lr){4-8}
\multicolumn{3}{c}{} & \multicolumn{5}{c}{Domain Decomposition ($h_1/h_2=1/4$)} \\
\cmidrule(lr){4-8}
    &               &                  & {$N_1/N_2$} & {$E_\infty$: $\Omega_1$} & {Rate: $\Omega_1$} & {$E_\infty$: $\Omega_2$} & {Rate: $\Omega_2$} \\
\cmidrule(lr){4-8}
    &            &             & 20/12   & 5.6759e+00 & \textemdash & 3.4957e+00 & \textemdash \\
    &            &             & 36/20   & 1.4766e+00 & 1.94        & 1.0809e+00 & 1.69 \\
    &            &             & 68/36   & 3.5615e-01 & 2.05        & 2.8524e-01 & 1.92 \\
    &            &             & 132/68  & 7.1459e-02 & 2.32        & 7.1702e-02 & 1.99 \\
\bottomrule
\end{tabular}
\end{table}

Observe that in Table \ref{table:convergence-rho-max-norm} and Table \ref{table:convergence-c-max-norm} the errors in sub-domain $\Omega_1$ from both DD ($h_1/h_2=1/2$) and DD ($h_1/h_2=1/4$) are identical to those from using SD approach, if the grid spacing $h$ and $h_1$ are the same. Also, the errors in sub-domain $\Omega_2$ are less in magnitude than errors in sub-domain $\Omega_1$, even though the meshes for sub-domain $\Omega_2$ are coarser than meshes for sub-domain $\Omega_1$. This shows that our strategy of using domain decomposition as in Fig.~\ref{fig:DD1_2D_example} is effective and the error in the entire domain $\Omega$ is dominated by the error from $\Omega_1$. In addition, the errors in sub-domain $\Omega_2$ from using DD ($h_1/h_2=1/2$) are smaller than using DD ($h_1/h_2=1/4$), since DD ($h_1/h_2=1/2$) employs finer mesh for sub-domain $\Omega_2$. Overall, errors from sub-domain $\Omega_1$ dominate the errors in $\Omega$, and second order convergence in space are observed as expected in the SD approach, DD ($h_1/h_2=1/2$) or DD ($h_1/h_2=1/4$) for both $\rho$ and $c$.

%%%%%%%%%%%%%%%%%%%%%%%%%%%%%%%%%%%%%%%%%%%%%%%%%%%%%%%%%%%%%%%%%%%%%%%%%%%%%%%%%%%%%%%%%%%%%%%%%%%%%%%%%%%%

\paragraph{Convergence of max density in space.}

\begin{table}
\caption{Convergence of relative errors of $\rho$ (as computed in \eqref{eqn:sd-conv-max-density-in-space} and \eqref{eqn:dd-conv-max-density-in-space}) for Test \eqref{test:blowup_at_center}. The results are obtained using fixed time step $\Delta t=10^{-8}$ to final time $10^{-6}$. The reference solutions for both SD and DD approaches are obtained using the discrete solution on mesh $516\times516\times516$ in the SD approach.}
\label{table:convergence-rho_max-in-space}
\centering
\footnotesize
\sisetup{
  output-exponent-marker = \text{ E},
  exponent-product={},
  retain-explicit-plus
}
\begin{tabular}{c S[table-format=1.4e2] c c S[table-format=1.4e2] c c S[table-format=1.4e2] c}
\toprule
\multicolumn{3}{c}{SD} & \multicolumn{3}{c}{DD ($h_1/h_2=1/2$)} & \multicolumn{3}{c}{DD ($h_1/h_2=1/4$)} \\
\cmidrule(lr){1-3}\cmidrule(lr){4-6} \cmidrule(lr){7-9}
{$N$} & {$E_{rel}$} & {Rate} & {$N_1/N_2$} & {$E_{rel}$} & {Rate} & {$N_1/N_2$} & {$E_{rel}$} & {Rate} \\
\cmidrule(lr){1-3}\cmidrule(lr){4-6} \cmidrule(lr){7-9}
 36 & 1.0196e-01 & \textemdash & 20/20   & 1.0196e-01 & \textemdash & 20/12   & 1.0196e-01 & \textemdash \\
 68 & 2.6378e-02 & 1.95        & 36/36   & 2.6378e-02 & 1.95        & 36/20   & 2.6378e-02 & 1.95 \\
132 & 6.3461e-03 & 2.06        & 68/68   & 6.3461e-03 & 2.06        & 68/36   & 6.3461e-03 & 2.06 \\
260 & 1.2724e-03 & 2.32        & 132/132 & 1.2724e-03 & 2.32        & 132/68  & 1.2724e-03 & 2.32 \\
\bottomrule
\end{tabular}
\end{table}

In Table~\ref{table:convergence-rho_max-in-space}, we observe identical relative errors and second order convergence among SD, DD ($h_1/h_2=1/2$) and DD ($h_1/h_2=1/4$), which shows the robustness of the domain decomposition approach.

%%%%%%%%%%%%%%%%%%%%%%%%%%%%%%%%%%%%%%%%%%%%%%%%%%%%%%%%%%%%%%%%%%%%%%%%%%%%%%%%%%%%%%%%%%%%%%%%%%%%%%%%%%%%%%%%%%%%%%%%%%%%%%%%%%%%%

\paragraph{Convergence in Time.}

In Table~\ref{table:convergence-rho-in-time} and Table \ref{table:convergence-c-in-time}, we observe first order convergence in time for SD approach and for both sub-domains in DD approach as we expected. The errors in sub-domain $\Omega_1$ are similar to errors in the single domain $\Omega$, regardless of the ratio $h_1/h_2=1/2$ or $h_1/h_2=1/4$. Although the meshes for $\Omega_2$ are coarser than meshes for $\Omega_1$, the errors in sub-domain $\Omega_2$ are much smaller than errors in sub-domain $\Omega_1$. We should note that errors from using SD approach and from using DD approach are similar, but are not exactly the same, because the mesh sizes are slightly different, i.e. $h_1/h_2\approx1/2$ as opposed to $h_1/h_2=1/2$. Overall, the errors from sub-domain $\Omega_1$ dominate the errors in the entire $\Omega$ as in the previous tests.

\begin{table}
\caption{Comparison of the accuracy in time (as computed in \eqref{eqn:sd-conv-in-time} and \eqref{eqn:dd-conv-in-time}) and the order of the convergence between SD and DD for $\rho$ in Test~\eqref{test:blowup_at_center} at the final time $10^{-6}$ with time step $\Delta t=10^{-7}/\tau$. The underlying mesh for SD is $N=255$ and the meshes for DD are $N_1/N_2=127/127$ ($h_1/h_2\approx1/2$) and $N_1/N_2=127/63$ ($h_1/h_2\approx1/4$). The reference solutions are obtained using time step $\Delta t=10^{-7}/512$ at the same final time $10^{-6}$.}
\label{table:convergence-rho-in-time}
\centering
\footnotesize
\sisetup{
  output-exponent-marker = \text{ E},
  exponent-product={},
  retain-explicit-plus
}
\begin{tabular}{c S[table-format=1.4e2] c c S[table-format=1.4e2] c S[table-format=1.4e2] c}
\toprule
\multicolumn{3}{c}{Single Domain} & \multicolumn{5}{c}{Domain Decomposition ($h_1/h_2\approx1/2$)} \\
\cmidrule(lr){1-3}\cmidrule(lr){4-8}
{$\tau$} & {$E_t$: $\Omega$} & {Rate: $\Omega$} & {$\tau$} & {$E_t$: $\Omega_1$} & {Rate: $\Omega_1$} & {$E_t$: $\Omega_2$} & {Rate: $\Omega_2$} \\
\cmidrule(lr){1-3}\cmidrule(lr){4-8}
 16 & 8.2335e-03 & \textemdash &  16 & 8.2331e-03 & \textemdash & 2.0476e-11 & \textemdash \\
 32 & 3.9846e-03 & 1.05        &  32 & 3.9844e-03 & 1.05        & 1.0063e-11 & 1.02 \\
 64 & 1.8596e-03 & 1.10        &  64 & 1.8595e-03 & 1.10        & 5.0106e-12 & 1.01 \\
128 & 7.9700e-04 & 1.22        & 128 & 7.9697e-04 & 1.22        & 2.3668e-12 & 1.08 \\
256 & 2.6567e-04 & 1.58        & 256 & 2.6566e-04 & 1.58        & 9.4522e-13 & 1.32 \\
\cmidrule(lr){1-3}\cmidrule(lr){4-8}
\multicolumn{3}{c}{} & \multicolumn{5}{c}{Domain Decomposition ($h_1/h_2\approx1/4$)} \\
\cmidrule(lr){4-8}
      &     &                  & {$\tau$} & {$E_t$: $\Omega_1$} & {Rate: $\Omega_1$} & {$E_t$: $\Omega_2$} & {Rate: $\Omega_2$} \\
\cmidrule(lr){4-8}
    &            &             &  16 & 8.2331e-03 & \textemdash & 2.0316e-11 & \textemdash \\
    &            &             &  32 & 3.9844e-03 & 1.05        & 9.9324e-12 & 1.03 \\
    &            &             &  64 & 1.8595e-03 & 1.10        & 4.8182e-12 & 1.04 \\
    &            &             & 128 & 7.9697e-04 & 1.22        & 2.3817e-12 & 1.02 \\
    &            &             & 256 & 2.6566e-04 & 1.58        & 1.0446e-12 & 1.19 \\
\bottomrule
\end{tabular}
\end{table}

\begin{table}
\caption{Comparison of the accuracy in time (computed similarly to \eqref{eqn:sd-conv-in-time} and \eqref{eqn:dd-conv-in-time}) and the order of the convergence between SD and DD for $c$ in Test~\eqref{test:blowup_at_center} at the final time $10^{-6}$ with time step $\Delta t=10^{-7}/\tau$. The underlying mesh for SD is $N=255$ and the mesh for DD is $N_1/N_2=127/127$ ($h_1/h_2\approx1/2$) and $N_1/N_2=127/63$ ($h_1/h_2\approx1/4$). The reference solutions are obtained using time step $\Delta t=10^{-7}/512$ at the same final time $10^{-6}$.}
\label{table:convergence-c-in-time}
\centering
\footnotesize
\sisetup{
  output-exponent-marker = \text{ E},
  exponent-product={},
  retain-explicit-plus
}
\begin{tabular}{c S[table-format=1.4e2] c c S[table-format=1.4e2] c S[table-format=1.4e2] c}
\toprule
\multicolumn{3}{c}{Single Domain} & \multicolumn{5}{c}{Domain Decomposition ($h_1/h_2\approx1/2$)} \\
\cmidrule(lr){1-3}\cmidrule(lr){4-8}
{$\tau$} & {$E_t$: $\Omega$} & {Rate: $\Omega$} & {$\tau$} & {$E_t$: $\Omega_1$} & {Rate: $\Omega_1$} & {$E_t$: $\Omega_2$} & {Rate: $\Omega_2$} \\
\cmidrule(lr){1-3}\cmidrule(lr){4-8}
 16 & 6.1248e-08 & \textemdash &  16 & 1.2023e-07 & \textemdash & 2.6835e-08 & \textemdash \\
 32 & 2.9665e-08 & 1.05        &  32 & 5.8245e-08 & 1.05        & 1.2975e-08 & 1.05 \\
 64 & 1.3889e-08 & 1.09        &  64 & 2.7205e-08 & 1.10        & 6.0458e-09 & 1.10 \\
128 & 5.9737e-09 & 1.22        & 128 & 1.1680e-08 & 1.22        & 2.5827e-09 & 1.23 \\
256 & 2.0236e-09 & 1.56        & 256 & 3.9206e-09 & 1.57        & 8.5743e-10 & 1.59 \\
\cmidrule(lr){1-3}\cmidrule(lr){4-8}
\multicolumn{3}{c}{} & \multicolumn{5}{c}{Domain Decomposition ($h_1/h_2\approx1/4$)} \\
\cmidrule(lr){4-8}
      &     &                  & {$\tau$} & {$E_t$: $\Omega_1$} & {Rate: $\Omega_1$} & {$E_t$: $\Omega_2$} & {Rate: $\Omega_2$} \\
\cmidrule(lr){4-8}
    &            &             &  16 & 3.3382e-07 & \textemdash & 2.1151e-09 & \textemdash \\
    &            &             &  32 & 1.6168e-07 & 1.05        & 1.0236e-09 & 1.05 \\
    &            &             &  64 & 7.5496e-08 & 1.10        & 4.7699e-10 & 1.10 \\
    &            &             & 128 & 3.2382e-08 & 1.22        & 2.0256e-10 & 1.24 \\
    &            &             & 256 & 1.0826e-08 & 1.58        & 6.7669e-11 & 1.58 \\
\bottomrule
\end{tabular}
\end{table}

%%%%%%%%%%%%%%%%%%%%%%%%%%%%%%%%%%%%%%%%%%%%%%%%%%%%%%%%%%%%%%%%%%%%%%%%%%%%%%%%%%%%%%%%

\subsubsection{Comparison of the evolution of $\rho$ to final time $6\times10^{-5}$}

\begin{figure}
    \centering
    \begin{subfigure}[b]{0.4\textwidth}
        \includegraphics[width=\textwidth]{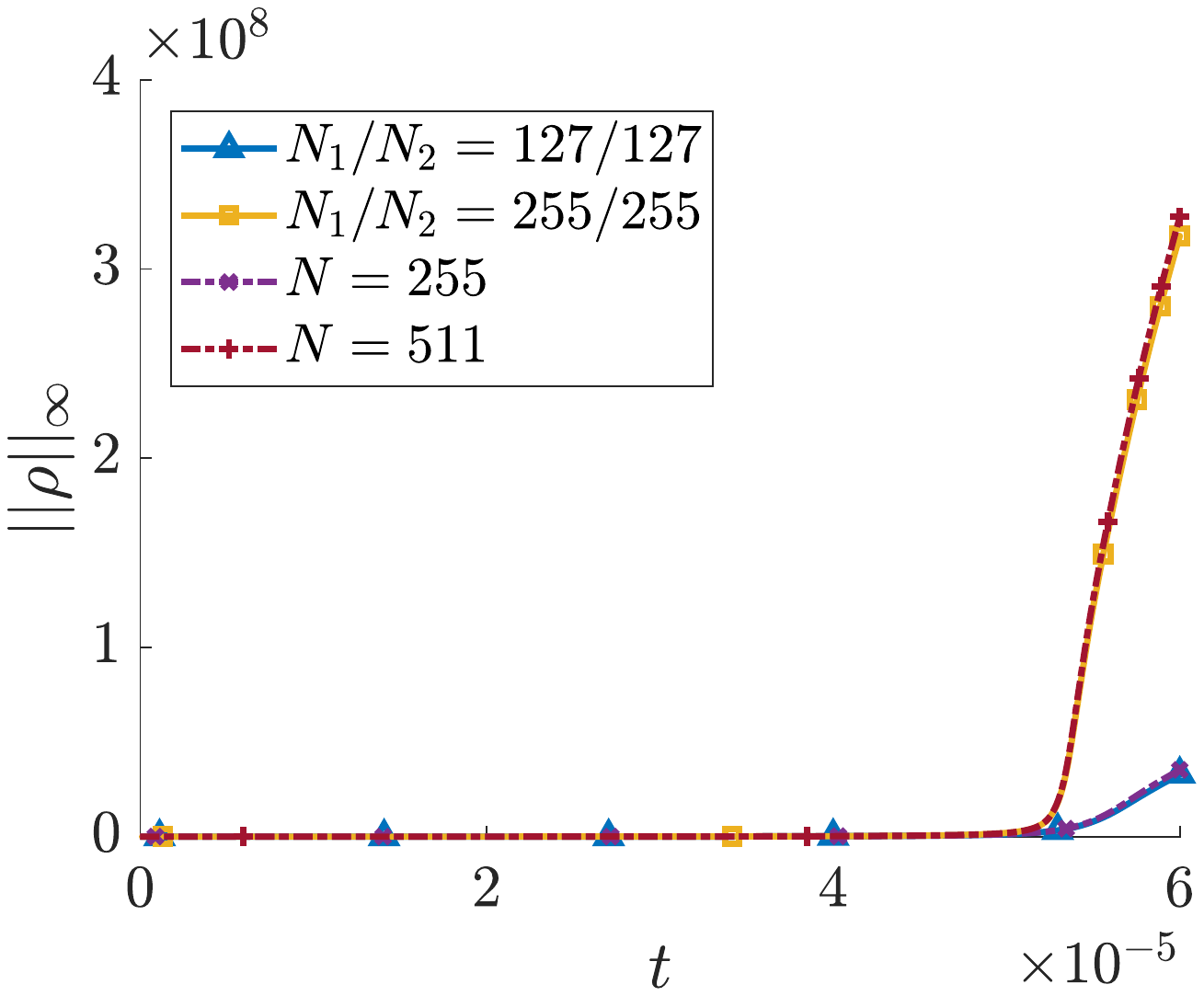}
        \caption{SD vs. DD ($h_1/h_2\approx1/2$)}
    \end{subfigure}
    \begin{subfigure}[b]{0.4\textwidth}
        \includegraphics[width=\textwidth]{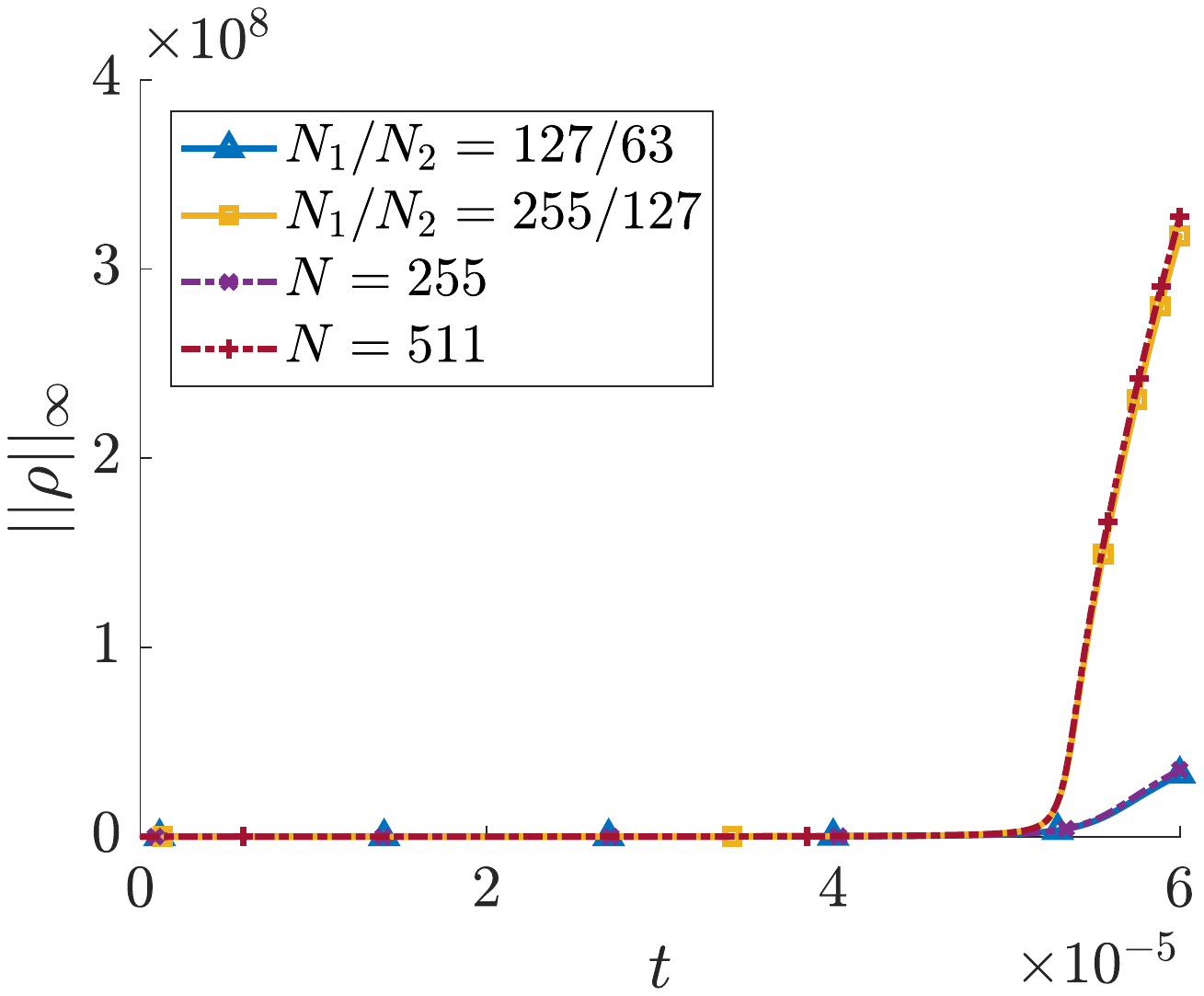}
        \caption{SD vs. DD ($h_1/h_2\approx1/4$)}
    \end{subfigure}
    \caption{Max Density \eqref{sd-norm:max} and \eqref{dd-norm:max} for Test~\eqref{test:blowup_at_center}.}
    \label{fig:comp-max-density}
\end{figure}

\begin{figure}
    \centering
    \begin{subfigure}[b]{0.4\textwidth}
        \includegraphics[width=\textwidth]{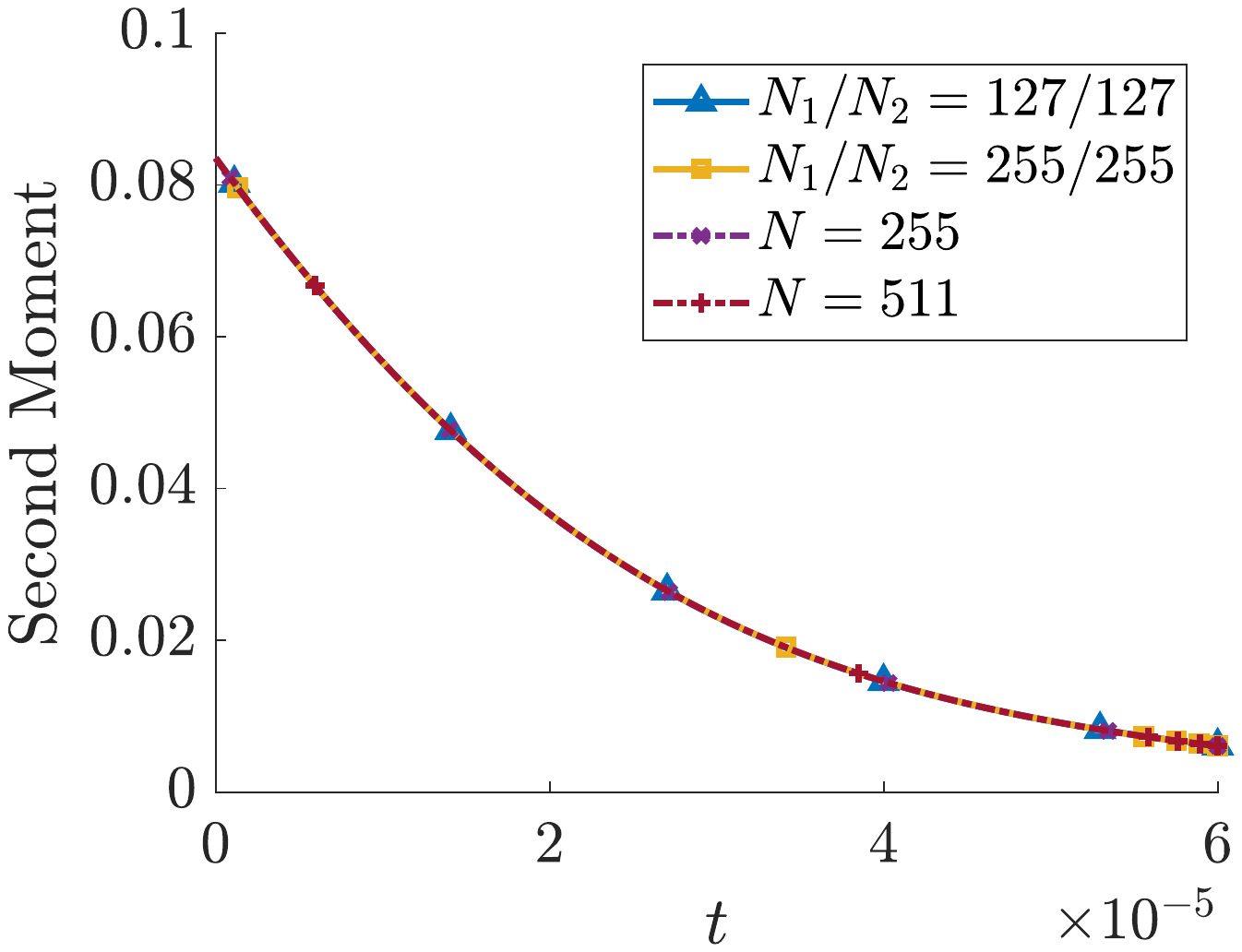}
        \caption{SD vs. DD ($h_1/h_2\approx1/2$)}
    \end{subfigure}
    \begin{subfigure}[b]{0.4\textwidth}
        \includegraphics[width=\textwidth]{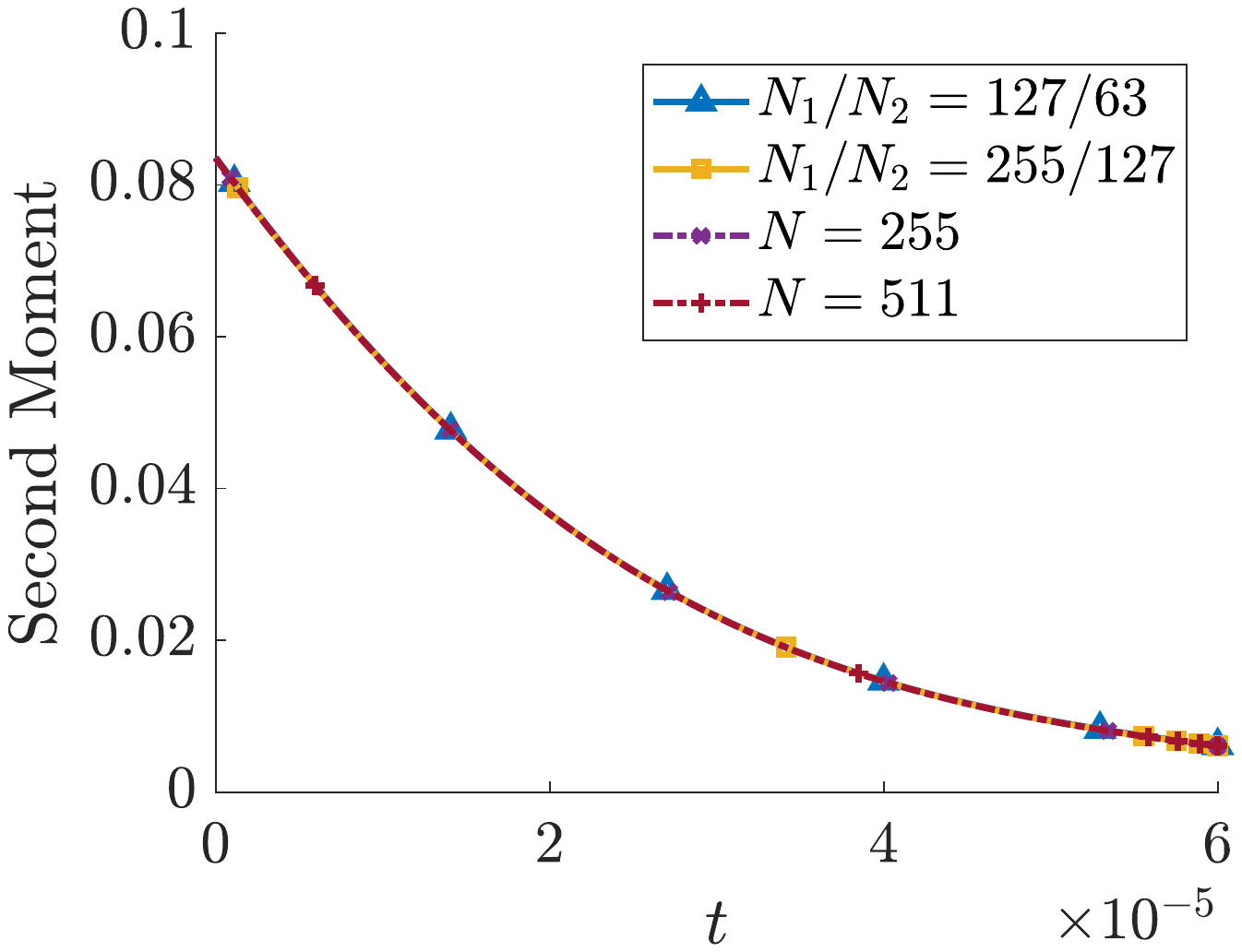}
        \caption{SD vs. DD ($h_1/h_2\approx1/4$)}
    \end{subfigure}
    \caption{Second Moment \eqref{sd-norm:second-moment} and \eqref{dd-norm:second-moment} for Test~\eqref{test:blowup_at_center}.}
    \label{fig:comp-second-moment}
\end{figure}

\begin{figure}
    \centering
    \begin{subfigure}[b]{0.4\textwidth}
        \includegraphics[width=\textwidth]{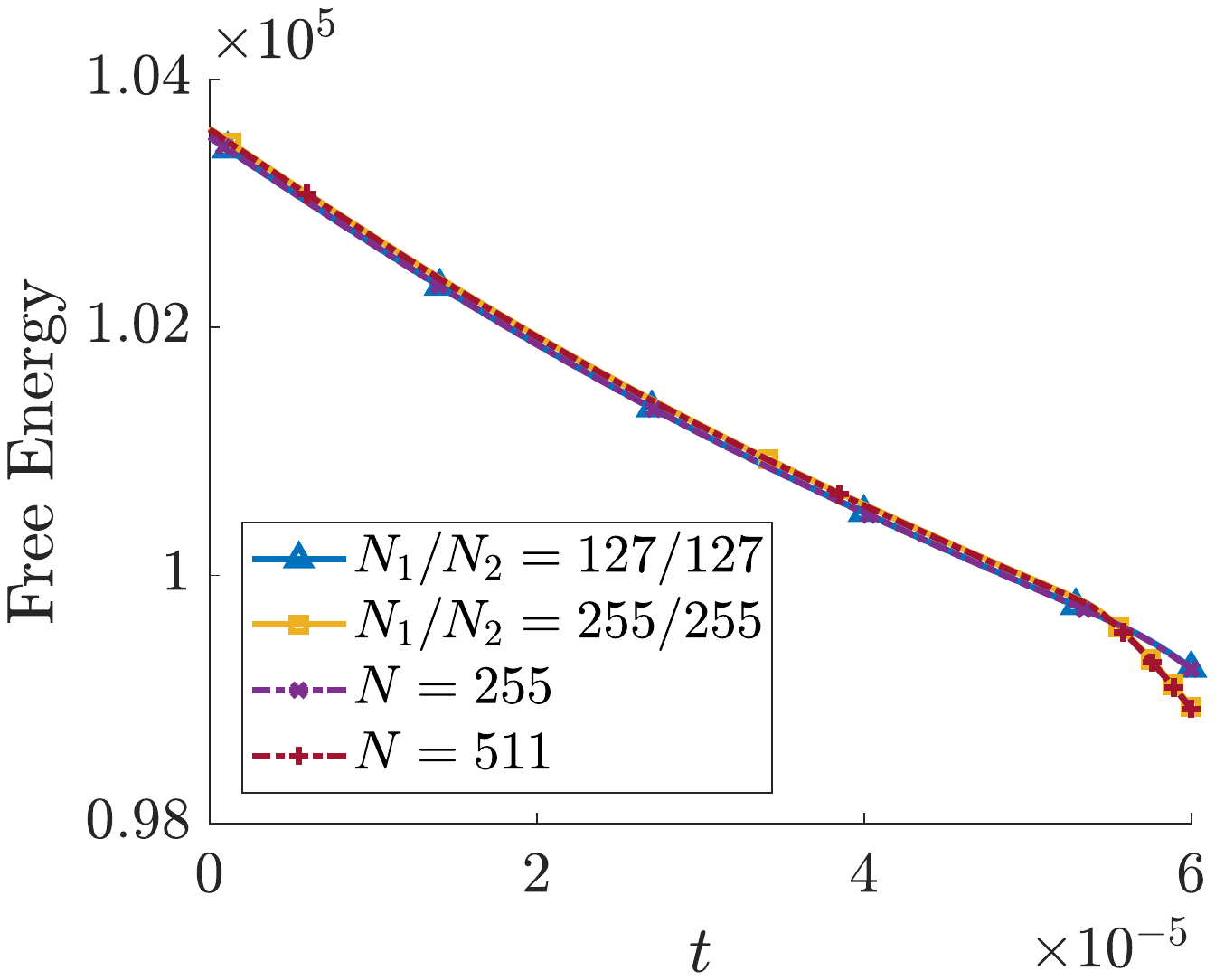}
        \caption{SD vs. DD ($h_1/h_2\approx1/2$)}
    \end{subfigure}
    \begin{subfigure}[b]{0.4\textwidth}
        \includegraphics[width=\textwidth]{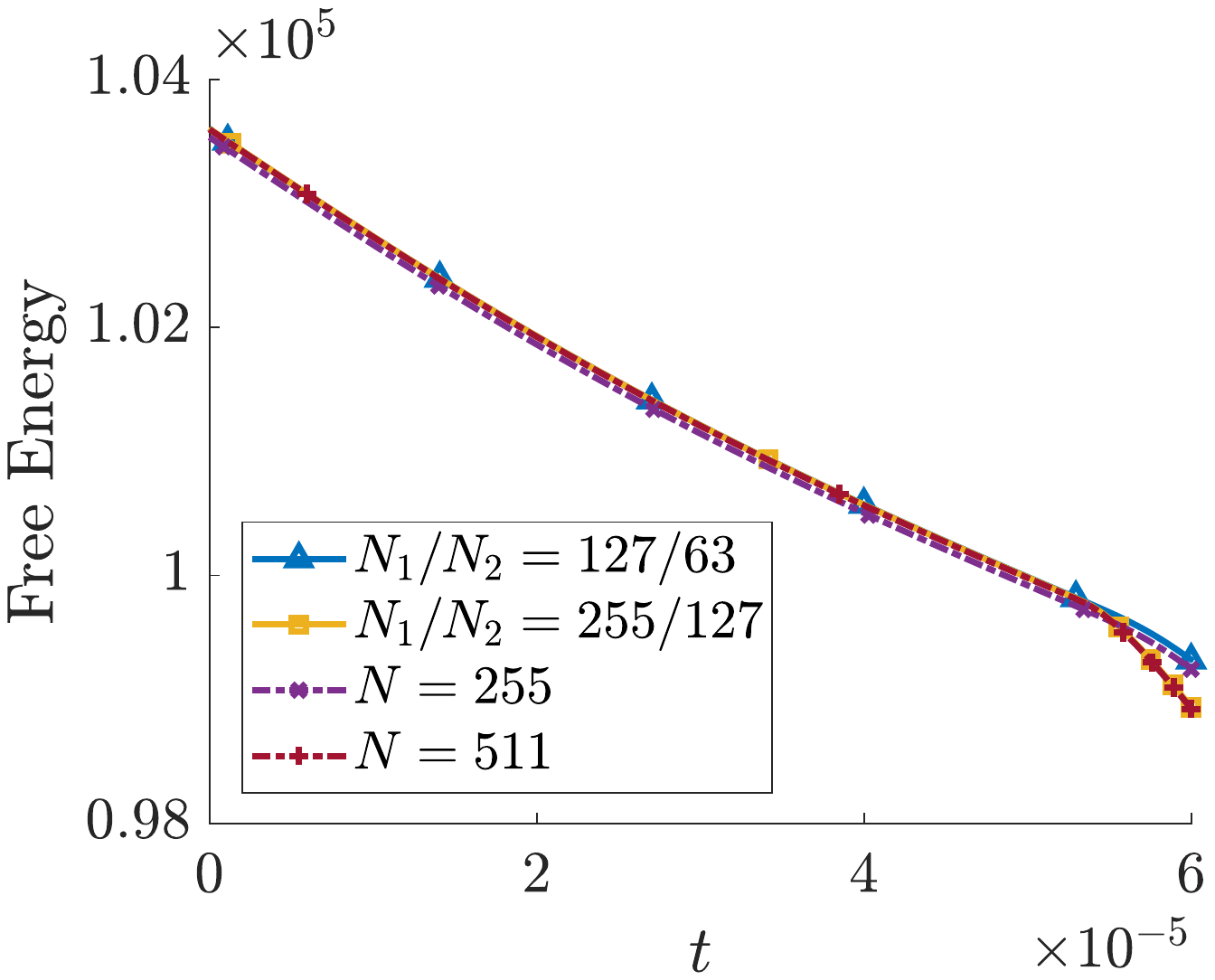}
        \caption{SD vs. DD ($h_1/h_2\approx1/4$)}
    \end{subfigure}
    \caption{Free Energy \eqref{sd-norm:free-energy} and \eqref{dd-norm:free-energy} for Test~\eqref{test:blowup_at_center}.}
    \label{fig:comp-free-energy}
\end{figure}
The reason for choosing the final time $6\times10^{-5}$ is that blow-up already occurs around $T=6\times10^{-5}$ (see Fig.~\ref{fig:evolution-blowup-center}). We consider mesh size $h$ for the SD approach and mesh sizes $h_1/h_2$ for the DD approach in sub-domain $\Omega_1$ and sub-domain $\Omega_2$ respectively. Before blow-up, the time step is selected to be $\Delta t=0.5h^2$ for the SD approach and $\Delta t = \min\{0.5h_1^2,0.5h^2_2\}$ for the DD approach. Near blow-up, the time step is selected according to the CFL-type condition~\eqref{eqn:cfl_condition}. Through Figs. \ref{fig:comp-max-density}--\ref{fig:comp-free-energy}, we obtain similar results in all aspects ($||\rho||_{\infty}$, free energy, etc.) between the SD and the DD approaches, as long as the finest grid sizes $h$ and $h_1$ are close. DD approaches with ratios $h_1/h_2\approx1/2$ and $h_1/h_2\approx1/4$ give similar approximations, due to smooth solutions in $\Omega_2$. 

We have seen that the domain decomposition approach gives similar accuracy as the single domain approach. Now, to illustrate the efficiency of the domain decomposition approach, we first define the following speedup ratio $R_S$:
\begin{align}\label{eqn:speedup-ratio}
R_S = \frac{T_{SD}}{T_{DD}}
\end{align}
where $T_{SD}$ is the wall-clock time of running the simulation with mesh size $h$ to the final time $T$ in the single domain approach, and $T_{DD}$ is the wall-clock time of running the same simulation with mesh sizes $h_1/h_2$ to the same final time $T$ in the domain decomposition approach. 
%Note that, to have a fair comparison, we assume $h\approx h_1$. 

In Table~\ref{table:speedup}, we summarize and compare the speedup ratio between two choices of mesh adaptivity in the domain decomposition approach. We observe that DD approach with $h_1/h_2\approx1/4$ gives approximately $7\times$/$8\times$ speedup, while DD approach with $h_1/h_2\approx1/2$ gives approximately $4\times$ speedup, in comparison to using the SD approach with similar accuracy in space. The speedup is mainly due to the reduction of degrees of freedom and better parallelization properties of DD algorithms.

\begin{table}[H]
\caption{Comparison of speedup ratio $R_S$ as computed in \eqref{eqn:speedup-ratio} between DD ($h_{1}/h_{2}\approx1/2$)/DD ($h_{1}/h_{2}\approx1/4$) and SD ($h\approx h_1$) to a final time $6\times10^{-5}$. The speedup ratios $R_S$ are computed using wall-clock time obtained by running the codes on the University of Utah CHPC cluster using one Notchpeak node (32 cores, 96/192 GB memory).}
\label{table:speedup}
\centering
\scriptsize
\sisetup{
  output-exponent-marker = \text{ E},
  exponent-product={},
  retain-explicit-plus
}
\begin{tabular}{cccccccc}
\toprule
{SD: $N$} & {SD: $h$} & {DD: $N_{1}/N_{2}$} & {DD: $h_{1}/h_{2}\approx1/2$} & {$R_S$} & {DD: $N_{1}/N_{2}$} & {DD: $h_{1}/h_{2}\approx1/4$} & {$R_S$} \\
\cmidrule(lr){1-2}\cmidrule(lr){3-5} \cmidrule(lr){6-8}
$127$ & $1/123$& $63/63$     & $\frac{1}{118}/\frac{1}{59}$  & 3.27 & $63/31$     & $\frac{1}{118}/\frac{1}{27}$ & 4.5\\
$255$ & $1/251$& $127/127$   & $\frac{1}{246}/\frac{1}{123}$ & 6.46 & $127/63$    & $\frac{1}{246}/\frac{1}{59}$ &  9.26\\
$511$ & $1/507$& $255/255$   & $\frac{1}{502}/\frac{1}{251}$ & 3.70 & $255/127$   & $\frac{1}{502}/\frac{1}{123}$ & 7.14\\     
\bottomrule
\end{tabular}
\end{table}

\subsection{Numerical Results for Test \eqref{test:blowup_at_center} to final time $10^{-4}$}
As can be seen in the last two subsections, DD ($h_1/h_2\approx1/4$) gives similar accuracy to SD and DD ($h_1/h_2\approx1/2$), but is much more efficient. Hence we will continue with DD ($h_1/h_2\approx1/4$) for Test \eqref{test:blowup_at_center} to explore and simulate the chemotactic process on finer meshes to a longer time $T=10^{-4}$. This final time is already post blow-up, as can be seen in Fig.~\ref{fig:evolution-blowup-center}. Before blow-up, the time step is also selected to be $\Delta t=0.5h^2$ for the SD approach and $\Delta t = \min\{0.5h_1^2,0.5h^2_2\}$ for the DD approach. Near blow-up, the time step is selected according to the CFL-type condition~\eqref{eqn:cfl_condition}. 

\begin{figure}
    \centering
    \begin{subfigure}[b]{0.4\textwidth}
        \includegraphics[width=\textwidth]{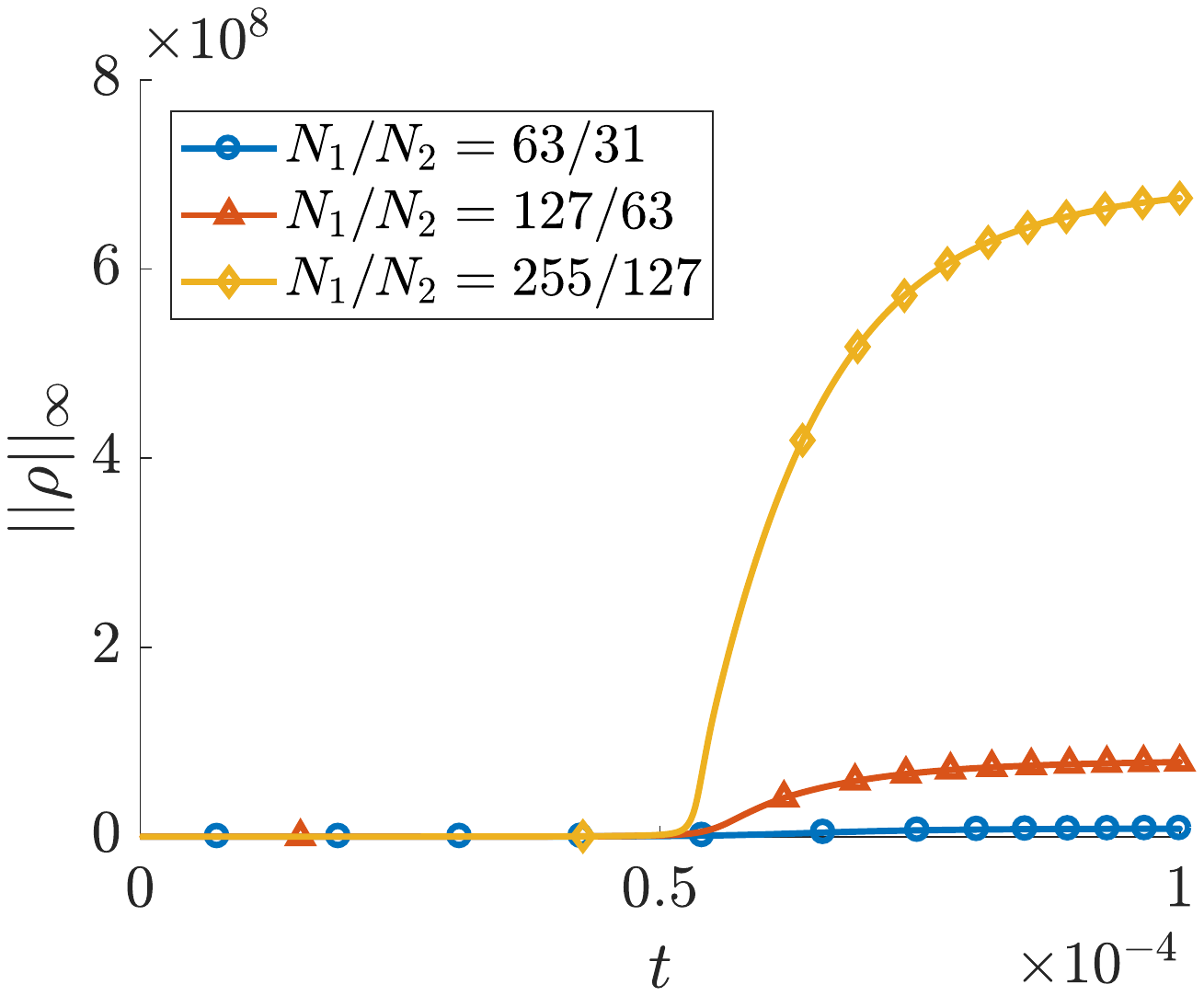}
        \caption{Max Density}\label{fig:sub-CC-md}
    \end{subfigure}
    \begin{subfigure}[b]{0.4\textwidth}
        \includegraphics[width=\textwidth]{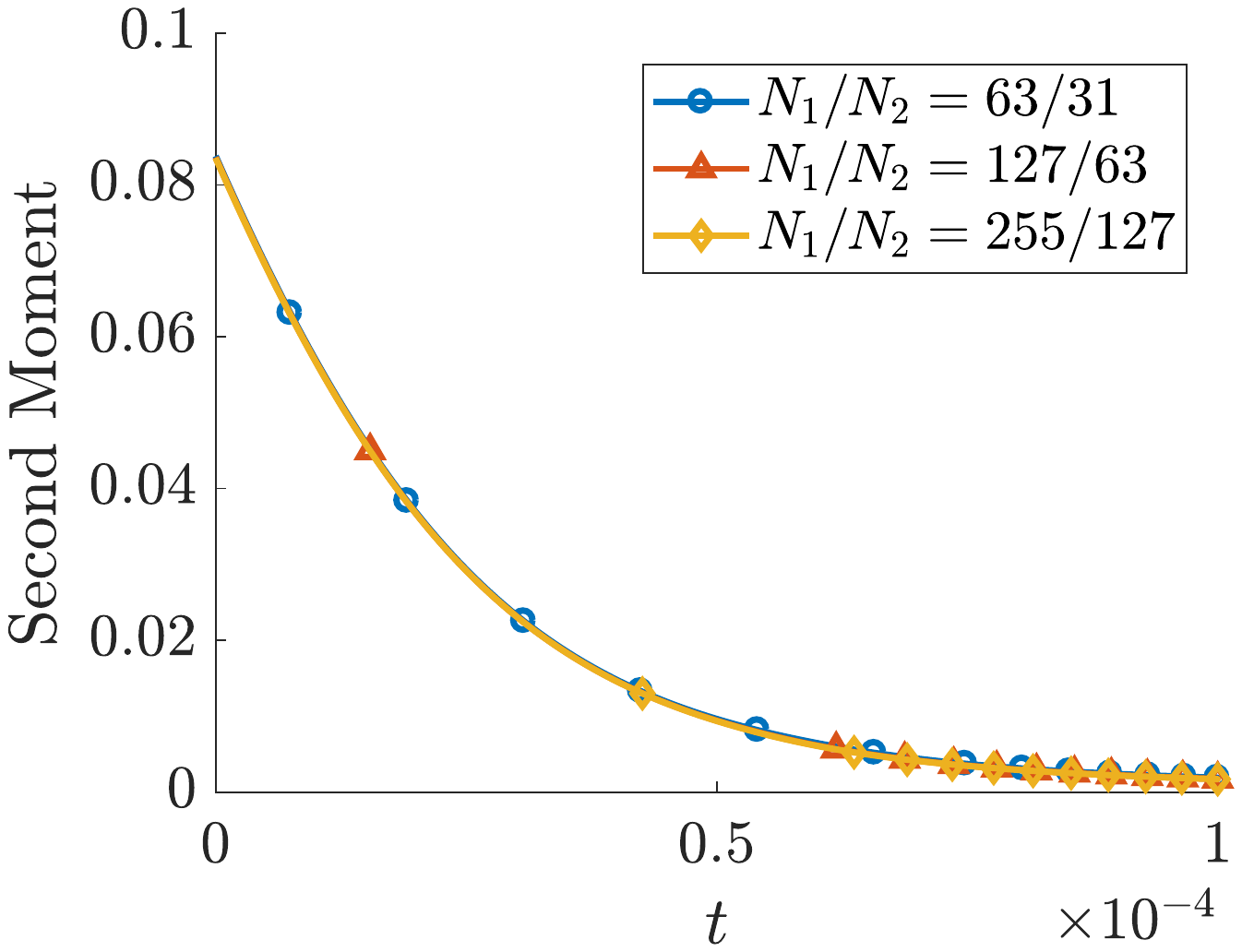}
        \caption{Second Moment}\label{fig:sub-CC-sm}
    \end{subfigure}
    \begin{subfigure}[b]{0.4\textwidth}
        \includegraphics[width=\textwidth]{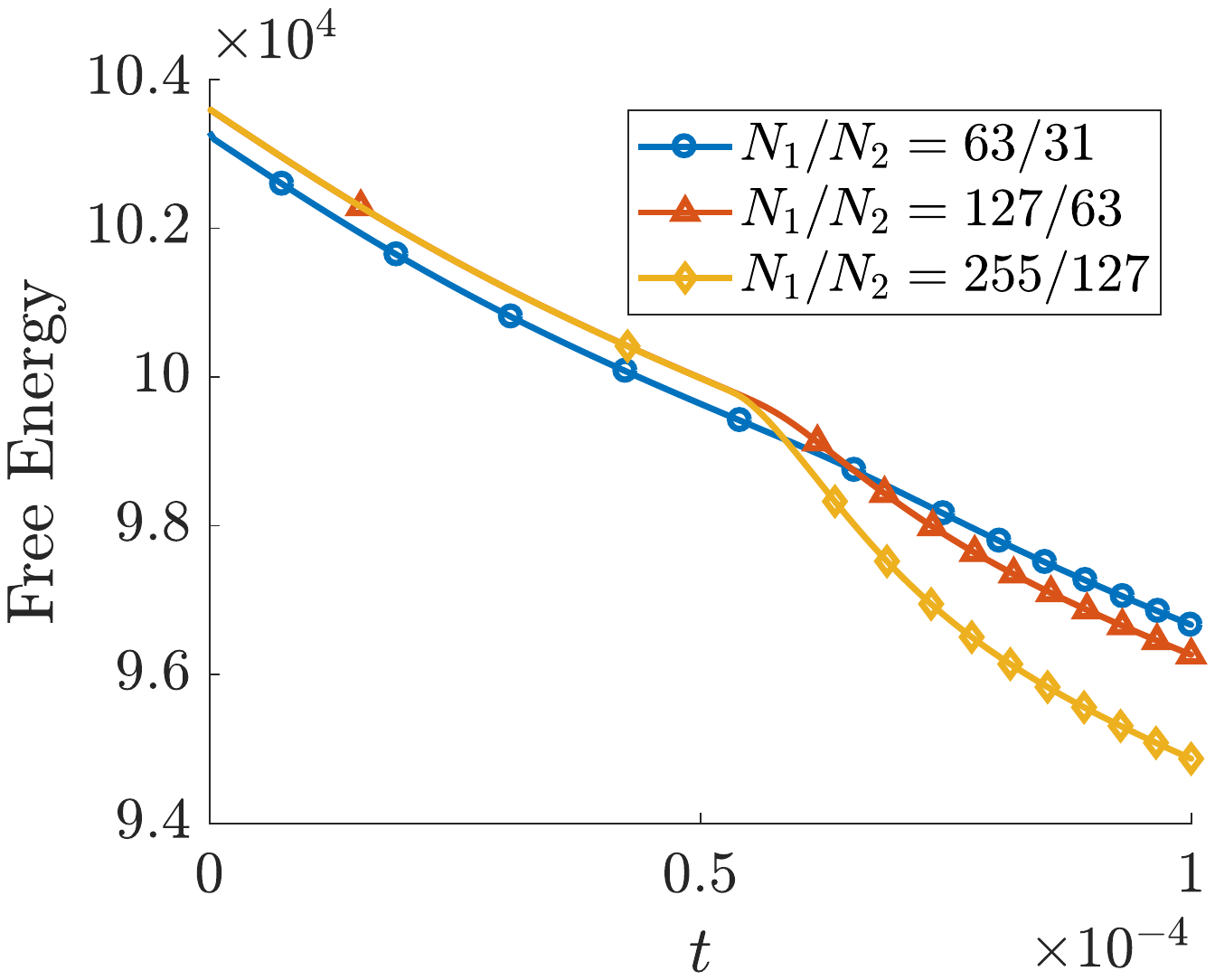}
        \caption{Free Energy}\label{fig:sub-CC-fe}
    \end{subfigure}
    \caption{Test~\eqref{test:blowup_at_center} to final time $10^{-4}$ using DD ($h_1/h_2\approx1/4$).}
    \label{fig:all-blowup-center}
\end{figure}

Before commenting on the longer-time simulation results, we should note that DD with mesh $N_1/N_2=255/127$ gives similar results to SD with mesh $N=511$, but SD with mesh $N=511$ would take significantly longer time to finish as we observed in simulations.

We plot the evolution of $||\rho||_{\infty}$ versus time in Fig.~\ref{fig:sub-CC-md}, and this test can be used to detect blow-up time, similar to~\cite{Chertock_2017,Chertock_2008,Epsh1}.
In Fig.~\ref{fig:sub-CC-sm} and Fig.~\ref{fig:sub-CC-fe}, we show that the second moment and the free energy are both decreasing, which obey the second law of thermodynamics. In addition, the decrease rates of free energy are similar on different meshes before time step becomes restrictive due to the CFL-type condition~\eqref{eqn:cfl_condition} near blow-up time. After the time step becomes smaller, the decrease rates of the free energy are even larger, especially on finer meshes. This might be explained by the faster aggregations of cells on finer meshes.

\begin{figure}
    \centering
    \begin{subfigure}[b]{0.33\textwidth}
        \includegraphics[width=\textwidth]{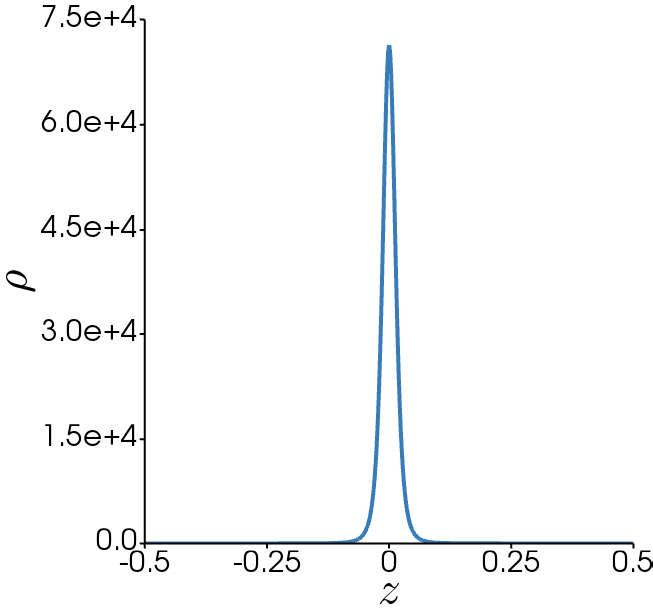}
        \caption{$\rho$ at $t=3\times10^{-5}$}
    \end{subfigure}
    \begin{subfigure}[b]{0.33\textwidth}
        \includegraphics[width=\textwidth]{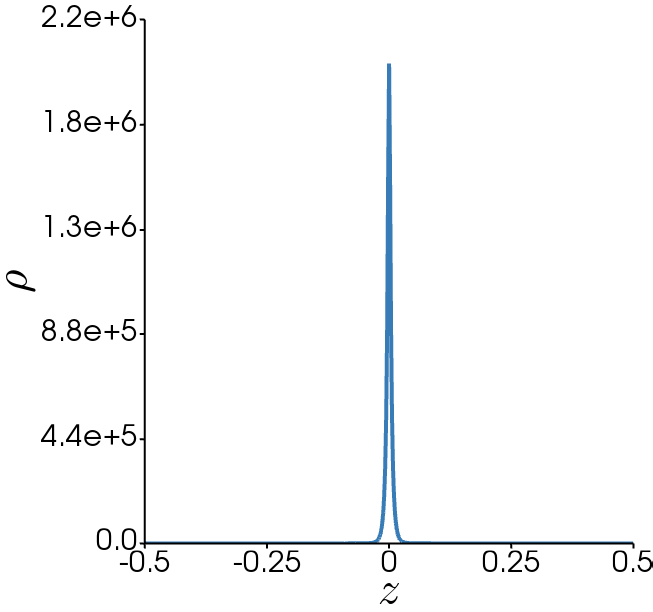}
        \caption{$\rho$ at $t=5\times10^{-5}$}
    \end{subfigure}
    \begin{subfigure}[b]{0.33\textwidth}
        \includegraphics[width=\textwidth]{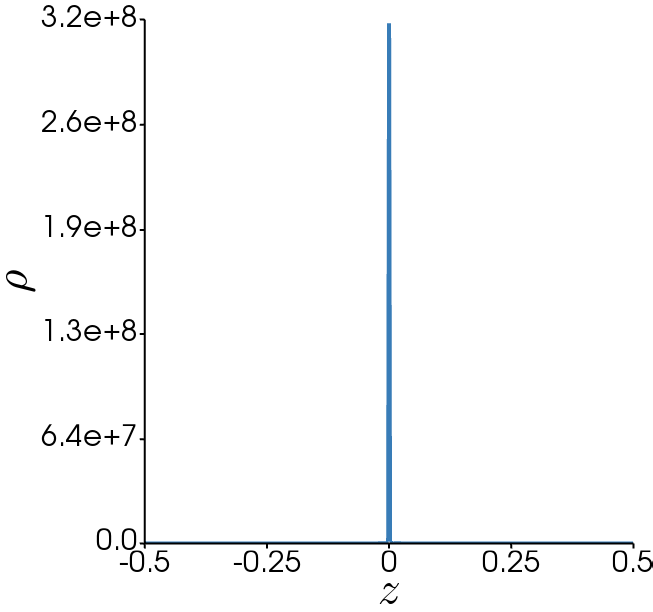}
        \caption{$\rho$ at $t=6\times10^{-5}$}
    \end{subfigure}
    \caption{Cutline plots along $z$-axis of $\rho$ at different times for Test~\eqref{test:blowup_at_center} on mesh $N_1/N_2=255/127$ with grid sizes $h_1/h_2\approx1/4$ using domain decomposition approach.}
    \label{fig:evolution-blowup-center}
\end{figure}

In Fig.~\ref{fig:evolution-blowup-center}, snapshots of the cutline plots along $z$-axis of the solutions $\rho$ at different times are given. We can see that the max value of $\rho$ increases by two magnitudes from $t=3\times10^{-5}$ to $t=5\times10^{-5}$. Density $\rho$ already starts to develop singularity at the origin at $t=5\times10^{-5}$. Eventually at $t=6\times10^{-5}$, $\rho$ develops a almost singular solution.

In Fig.~\ref{fig:isosurface-blowup-center}, we present the 3D view of the isosurface plots of the solutions $\rho$ and $c$ at time $t=3\times10^{-5}$, from which we saw that indeed the peak values of both solutions $\rho$ and $c$ occur at the origin only. We chose this time $t=3\times10^{-5}$ to give a better 3D visualization of the isosurface plots. As can be seen in Fig.~\ref{fig:evolution-blowup-center}, $\rho$ at later times will be highly concentrated at the origin, which makes the 3D isosurface plots at later time levels uninformative. Moreover, the outlines of two cubic auxiliary domains (for $\Omega_1$ and $\Omega_2$ respectively) are also given in Fig.~\ref{fig:isosurface-blowup-center}, from which we can observe that the peak values indeed occur in sub-domain $\Omega_1$.

\begin{figure}
    \centering
    \begin{subfigure}[b]{0.4\textwidth}
        \includegraphics[width=\textwidth]{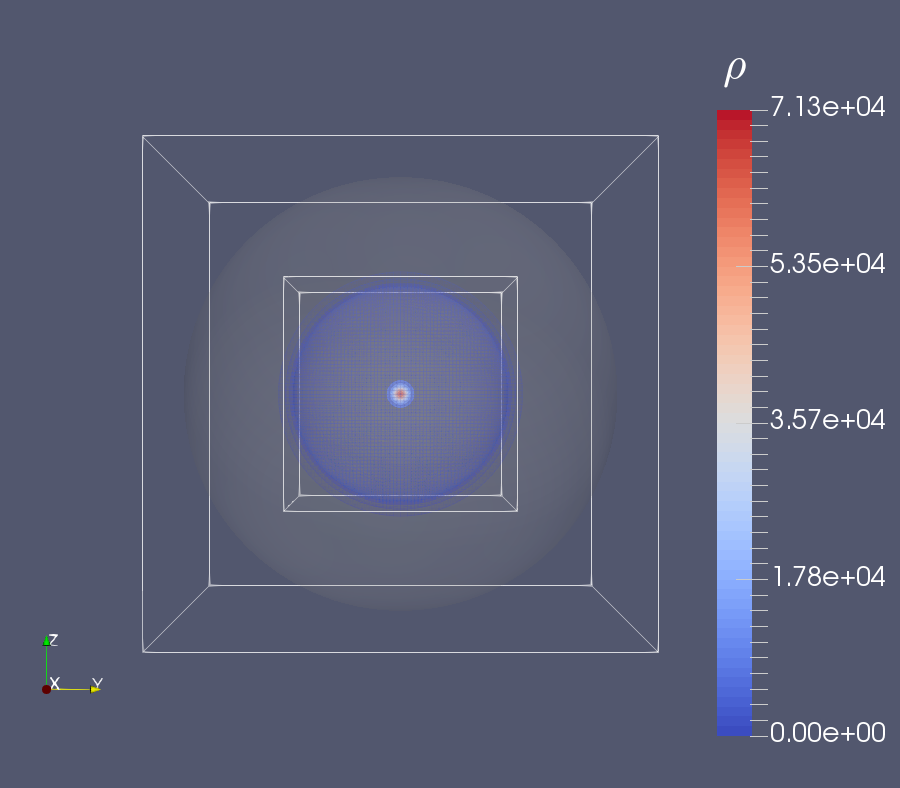}
        \caption{$\rho$ at $t=3\times10^{-5}$}
    \end{subfigure}
    \begin{subfigure}[b]{0.4\textwidth}
        \includegraphics[width=\textwidth]{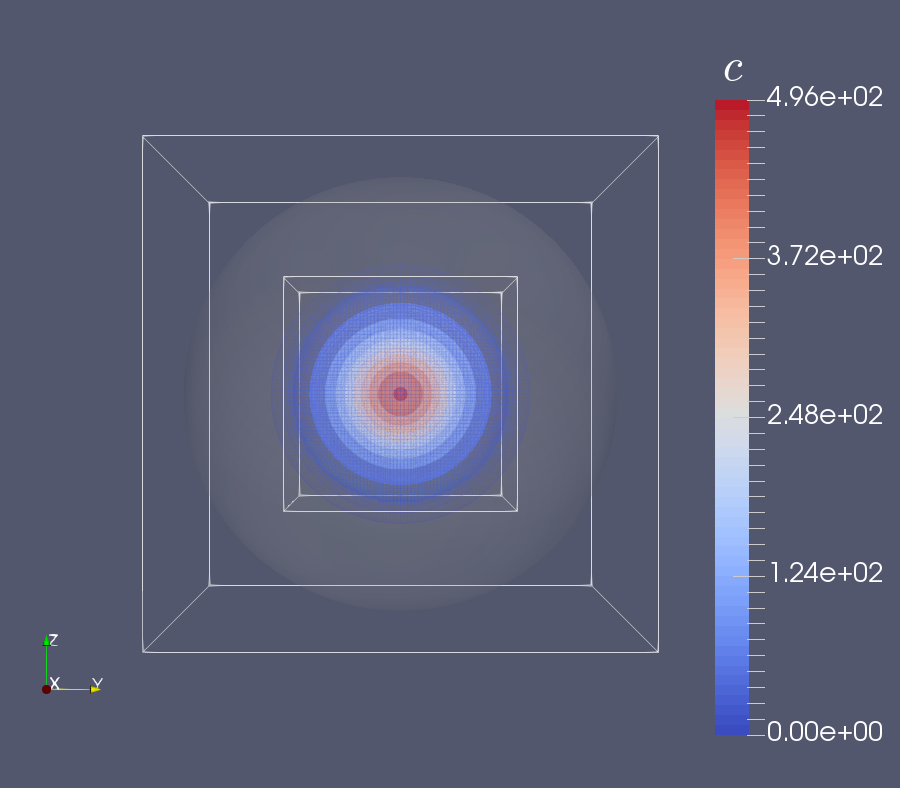}
        \caption{$c$ at $t=3\times10^{-5}$}
    \end{subfigure}
    \caption{Isosurface plots of $\rho$ (left) and $c$ (right) for Test~\eqref{test:blowup_at_center} on mesh $N_1/N_2=255/127$ with grid sizes $h_1/h_2\approx1/4$ using domain decomposition approach at $t=3\times10^{-5}$.}
    \label{fig:isosurface-blowup-center}
\end{figure}

%%%%%%%%%%%%%%%%%%%%%%%%%%%%%%%%%%%%%%%%%%%%%%%%%%%%%%%%%%%%%%%%%%%%%%%%%%%%%%%%%%%%%%%%%%%%%%%%%%%%%%%%%%%

\subsection{Selected Numerical Results for Test~\eqref{test:blowup_at_boundary}}\label{subsec:blowup_boundary}

In this subsection, we present selected simulation results for Test~\eqref{test:blowup_at_boundary}. The blow-up is expected to occur at the north pole and our simulation results agree with our expectations. Again, the time step is selected to be $\Delta t=0.5h^2$ for the SD approach and $\Delta t = \min\{0.5h_1^2,0.5h^2_2\}$ for the DD approach before blow-up. Near blow-up, the time step is selected according to the CFL-type condition~\eqref{eqn:cfl_condition}. In our simulations, the codes are terminated once the change in $||\rho||_{\infty}$ over two consecutive time steps exceeds 1000, i.e.
\begin{align}\label{criterion:stoppage}
\Delta ||\rho||_{\infty}\geq1000
\end{align} 
We use criterion \eqref{criterion:stoppage} to determine that the solutions have reached blow-up. As can be seen in Fig.~\ref{fig:sub-BB-md}, Fig.~\ref{fig:sub-BB-rho-change} and Table~\ref{table:rho-max-interval}, the max density $||\rho||_{\infty}$ undergoes a drastic increase in a very small time interval.

\begin{figure}
    \centering
    \begin{subfigure}[b]{0.45\textwidth}
        \includegraphics[width=\textwidth]{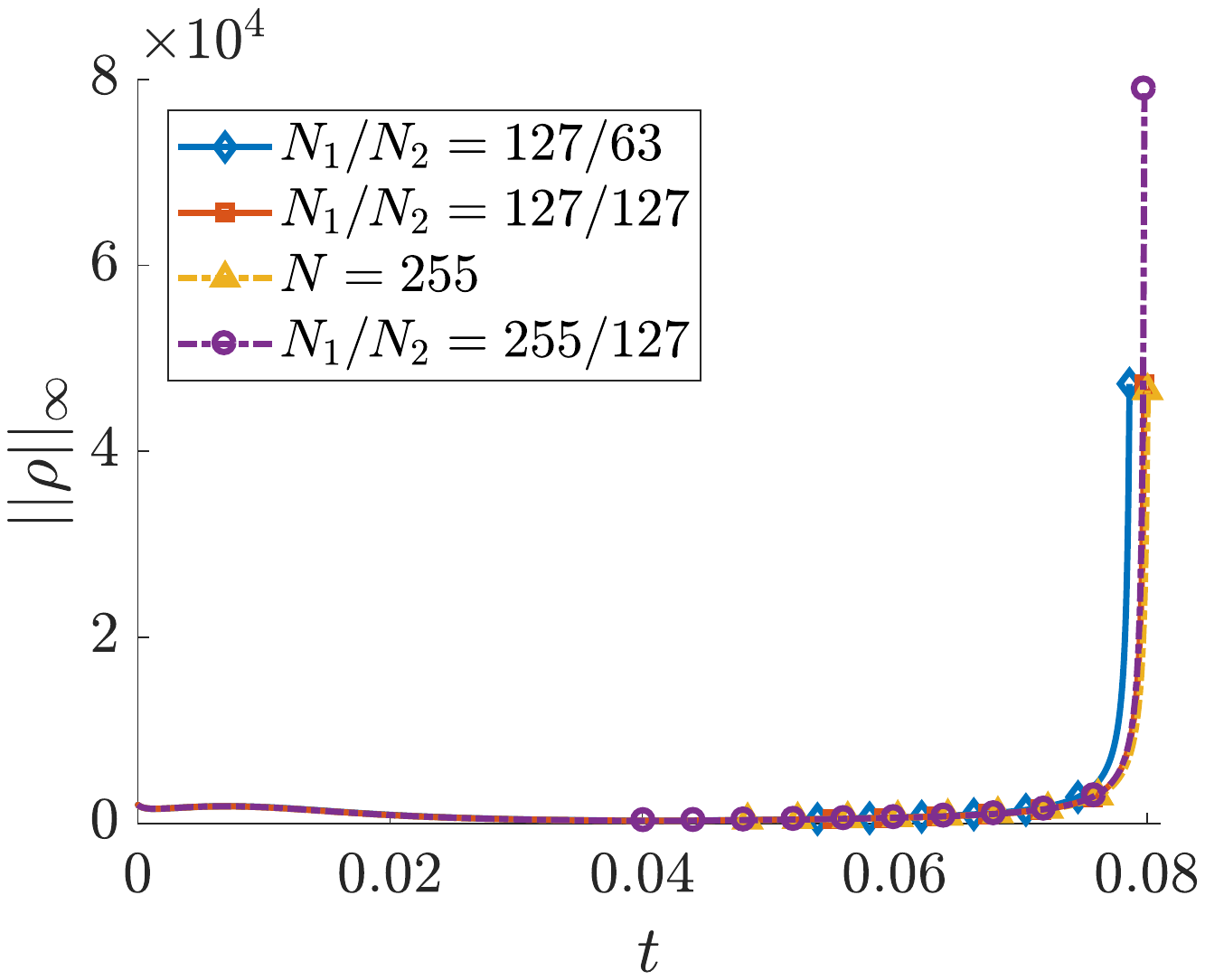}
        \caption{Max Density}\label{fig:sub-BB-md}
    \end{subfigure}
    \begin{subfigure}[b]{0.45\textwidth}
        \includegraphics[width=\textwidth]{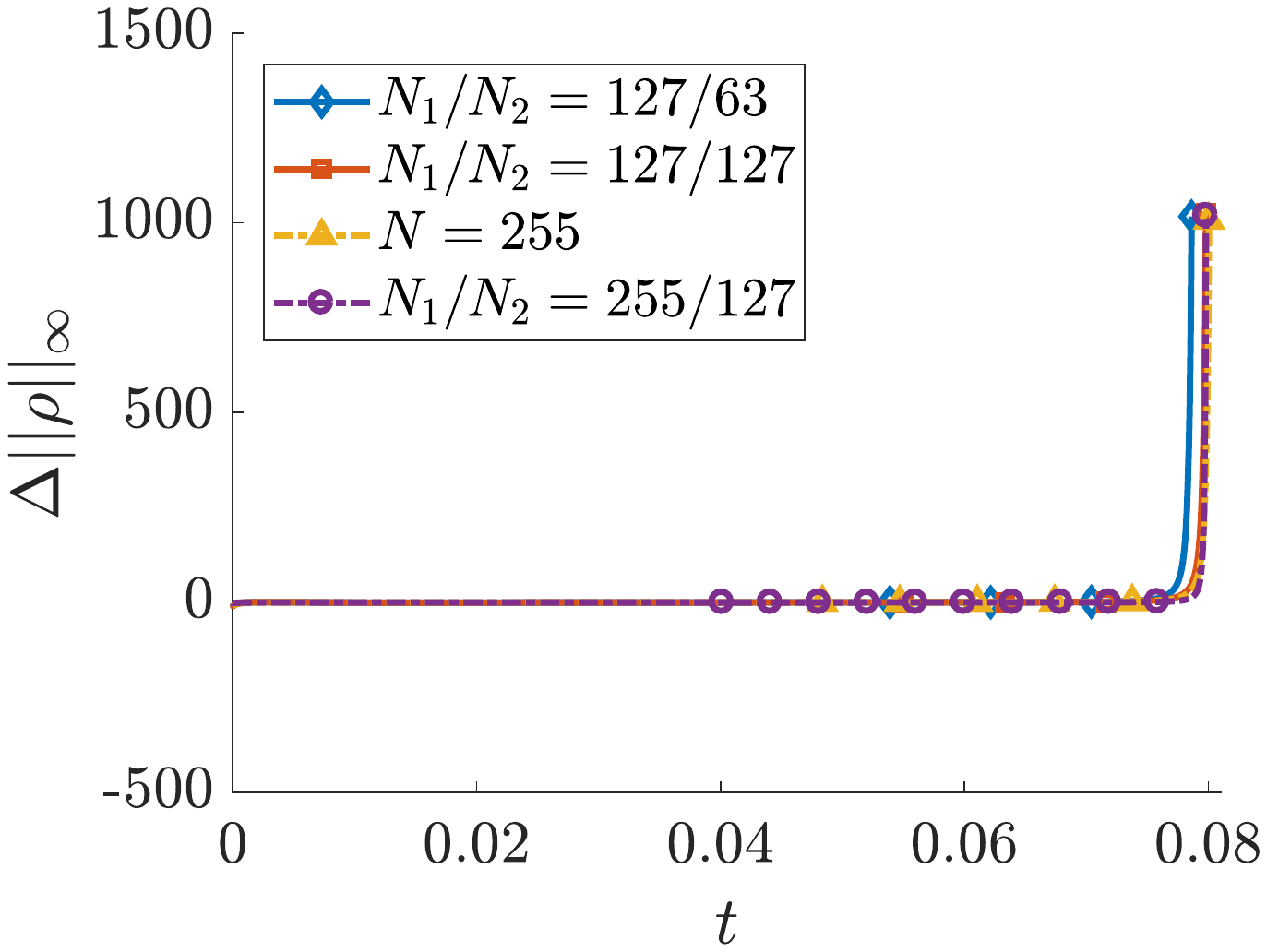}
        \caption{Change of $||\rho||_{\infty}$}\label{fig:sub-BB-rho-change}
    \end{subfigure}
    \begin{subfigure}[b]{0.45\textwidth}
        \includegraphics[width=\textwidth]{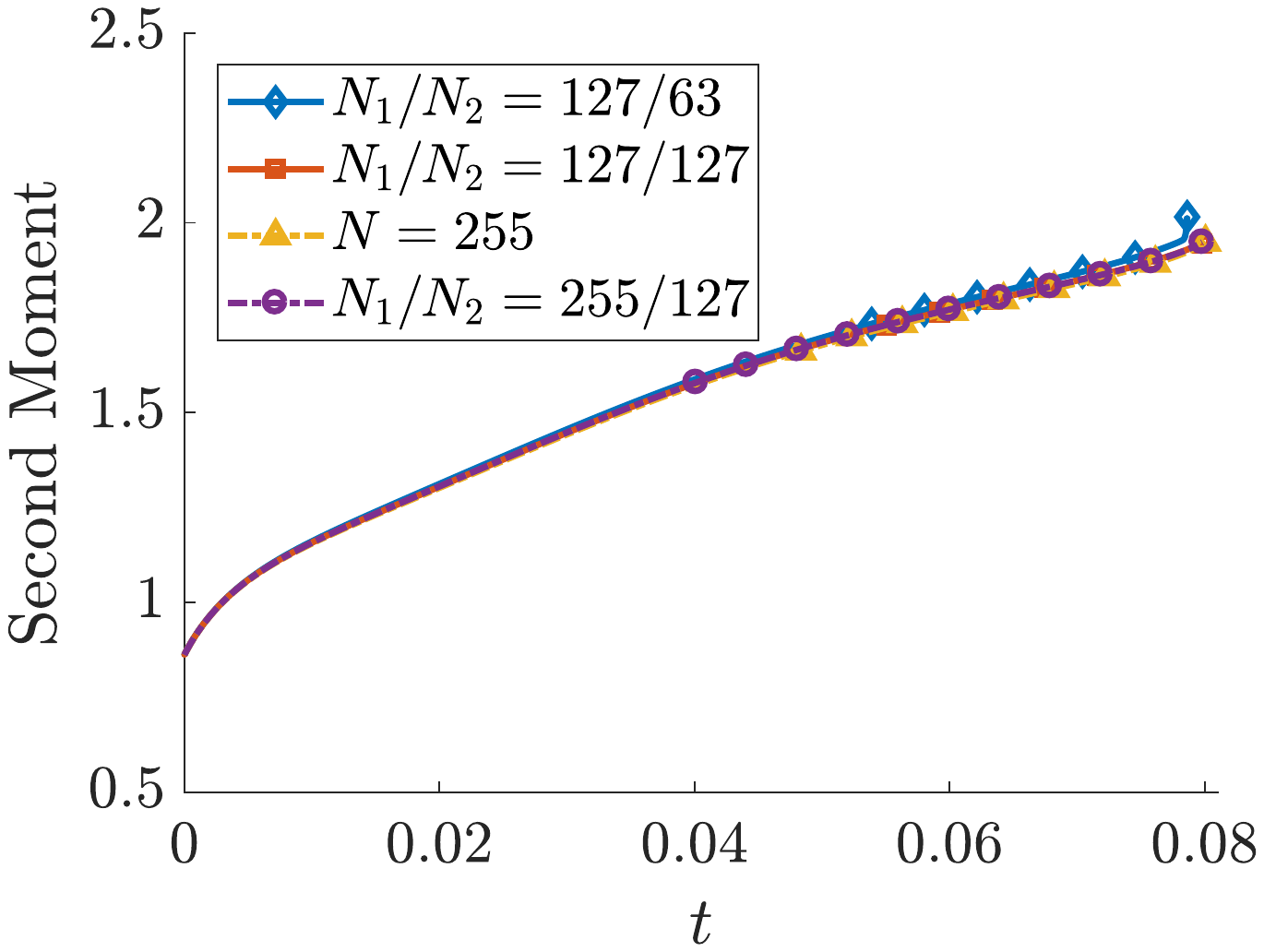}
        \caption{Second Moment}\label{fig:sub-BB-sm}
    \end{subfigure}
    \begin{subfigure}[b]{0.45\textwidth}
        \includegraphics[width=\textwidth]{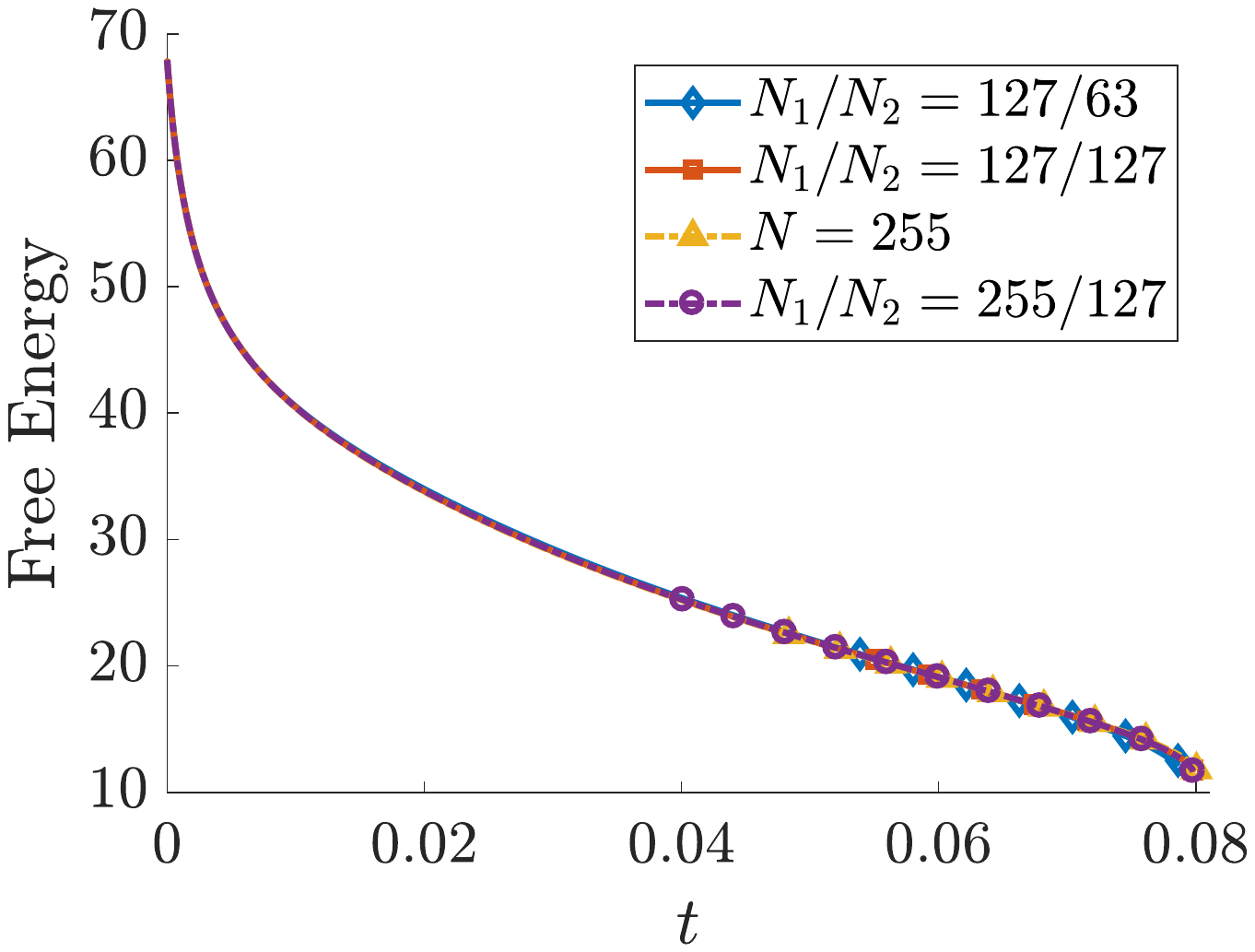}
        \caption{Free Energy}\label{fig:sub-BB-fe}
    \end{subfigure}
    \caption{Evolution of quantities for Test~\eqref{test:blowup_at_boundary}. 150, 300, 300 and 500 harmonics are used for each term in the 2-term extension operator along the boundary on mesh $127/63$, $127/127$, $255$ and $255/127$ respectively.}
    \label{fig:all-blowup-boundary}
\end{figure}

\begin{table}
\caption{$||\rho||_{\infty}$ at different times for Test~\eqref{test:blowup_at_boundary}. 500 harmonics are used on mesh 255/127 and 300 harmonics on mesh 127/127 for each term in the 2-term extension operator along the boundary.}
\label{table:rho-max-interval}
\centering
\footnotesize
\sisetup{
  output-exponent-marker = \text{ E},
  exponent-product={},
  retain-explicit-plus
}
\begin{tabular}{c c S[table-format=1.6e2] c S[table-format=1.6e2]}
\toprule
{} & \multicolumn{2}{c}{DD ($N_1/N_2=127/127$)} & \multicolumn{2}{c}{DD ($N_1/N_2=255/127$)}\\
\cmidrule(lr){2-3} \cmidrule(lr){4-5}
{} & {$t$} & {$||\rho||_{\infty}$} & {$t$} & {$||\rho||_{\infty}$}\\
\cmidrule(lr){2-3} \cmidrule(lr){4-5}
{$t_{\min}$} & 0.079744 & 3.825078E+04 & 0.079792 & 6.981745E+04\\
{$t_{\max}$} & 0.079804 & 4.714852E+04 & 0.079846 & 1.014028E+05\\
\bottomrule
\end{tabular}
\end{table}

The second moment in Fig.~\ref{fig:sub-BB-sm} is increasing as the cells are moving away from the origin. The free energy in Fig.~\ref{fig:sub-BB-fe} are decreasing, which also obeys the second law of thermodynamics. We should note that the free energies highly agree on difference meshes. This is due to the relatively small value of $c$ (in the magnitude of 10). Overall, we observe in Fig.~\ref{fig:all-blowup-boundary} that the SD approach, DD ($h_1/h_2\approx1/2$) and DD ($h_1/h_2\approx1/4$) approaches give similar results for Test~\eqref{test:blowup_at_boundary} as well. 

\begin{figure}
    \centering
    \begin{subfigure}[b]{0.4\textwidth}
        \includegraphics[width=\textwidth]{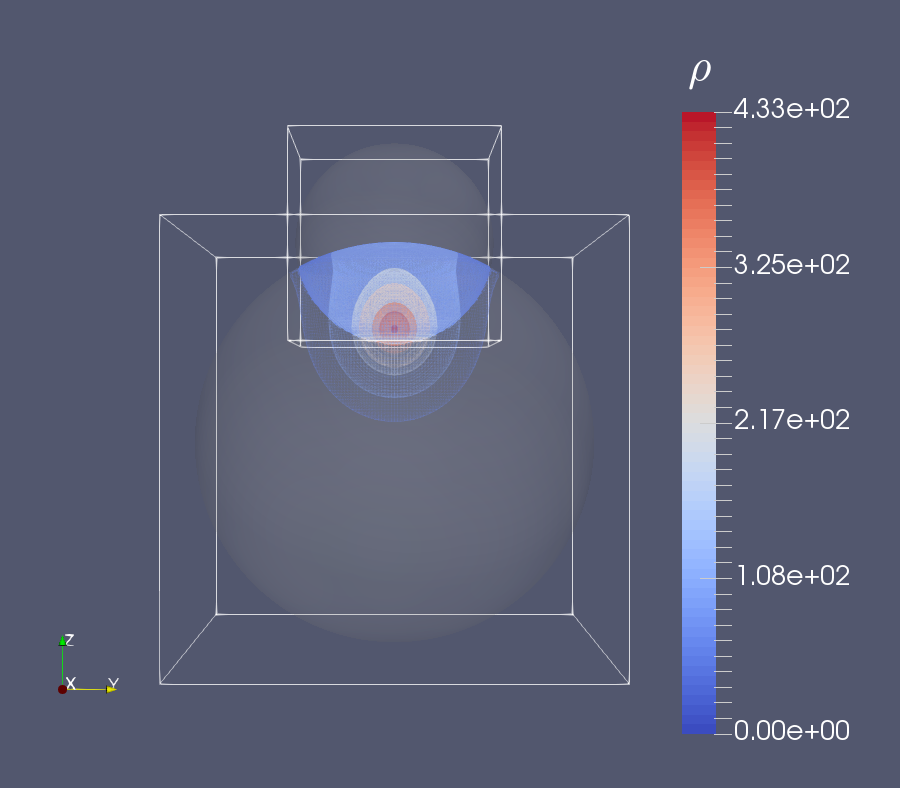}
        \caption{$\rho$ at $t=0.03$}
    \end{subfigure}
    \begin{subfigure}[b]{0.4\textwidth}
        \includegraphics[width=\textwidth]{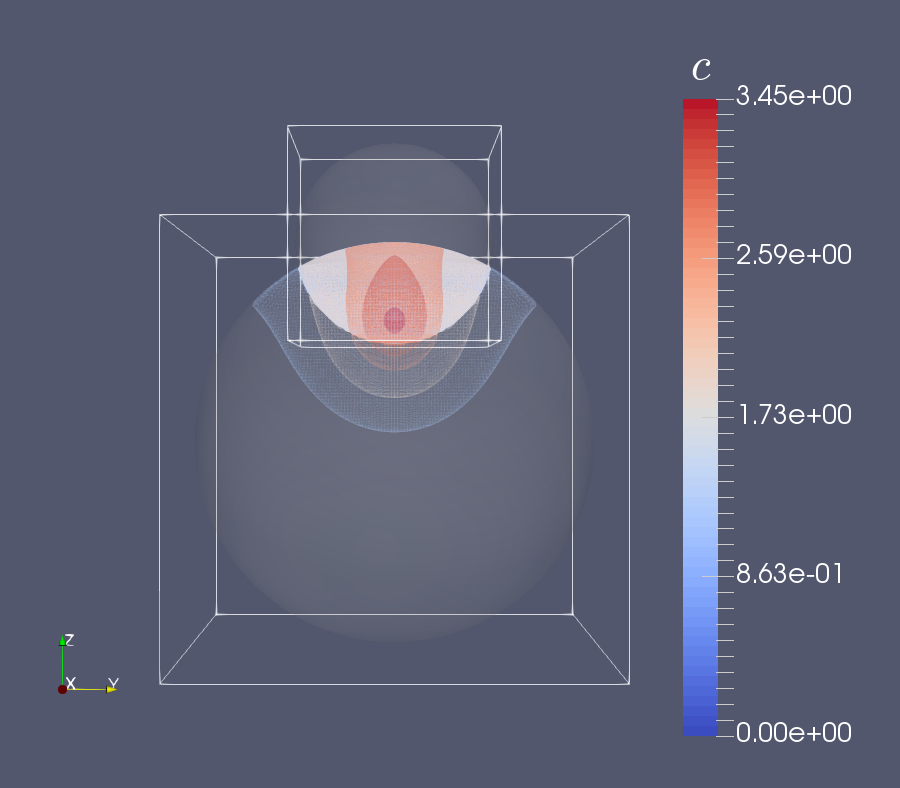}
        \caption{$c$ at $t=0.03$}
    \end{subfigure}
    \begin{subfigure}[b]{0.4\textwidth}
        \includegraphics[width=\textwidth]{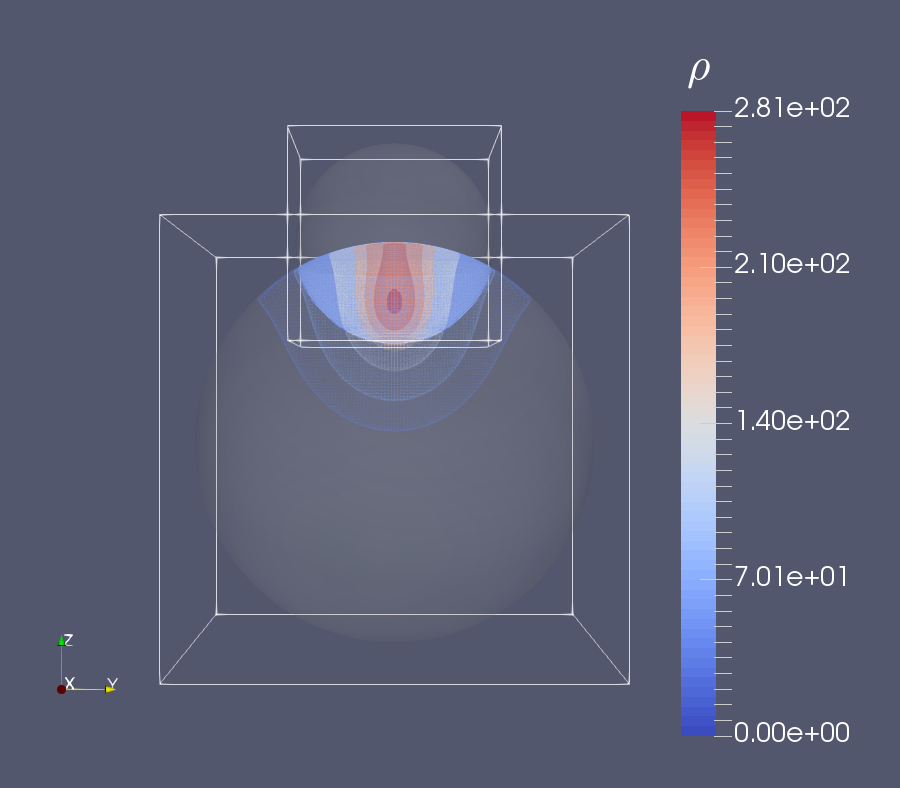}
        \caption{$\rho$ at $t=0.04$}
    \end{subfigure}
    \begin{subfigure}[b]{0.4\textwidth}
        \includegraphics[width=\textwidth]{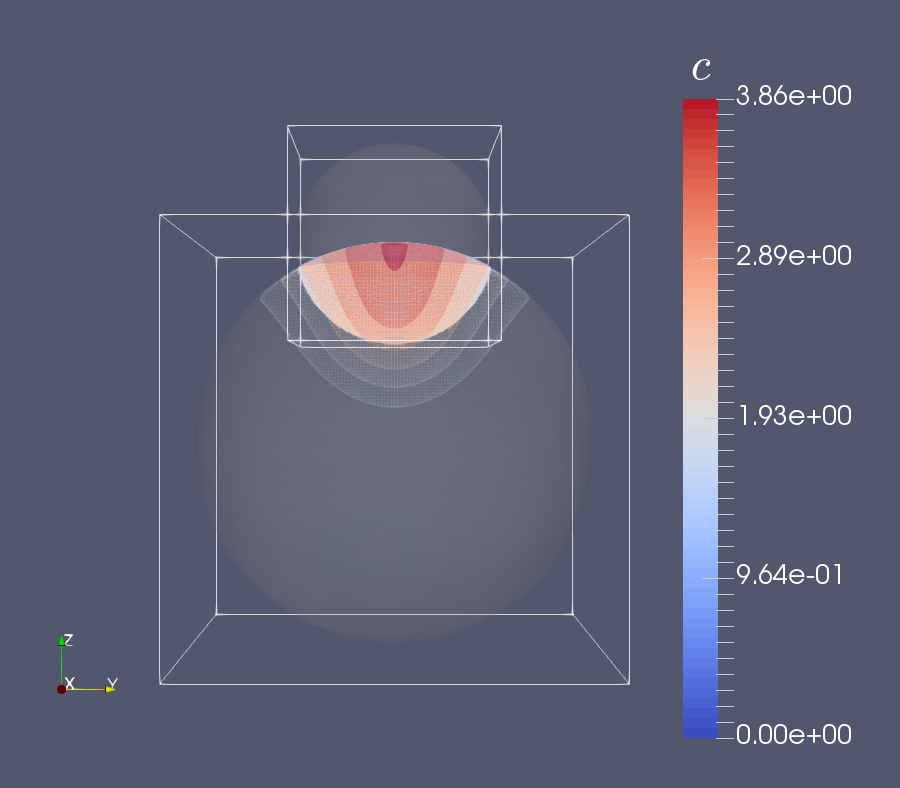}
        \caption{$c$ at $t=0.04$}
    \end{subfigure}
    \begin{subfigure}[b]{0.4\textwidth}
        \includegraphics[width=\textwidth]{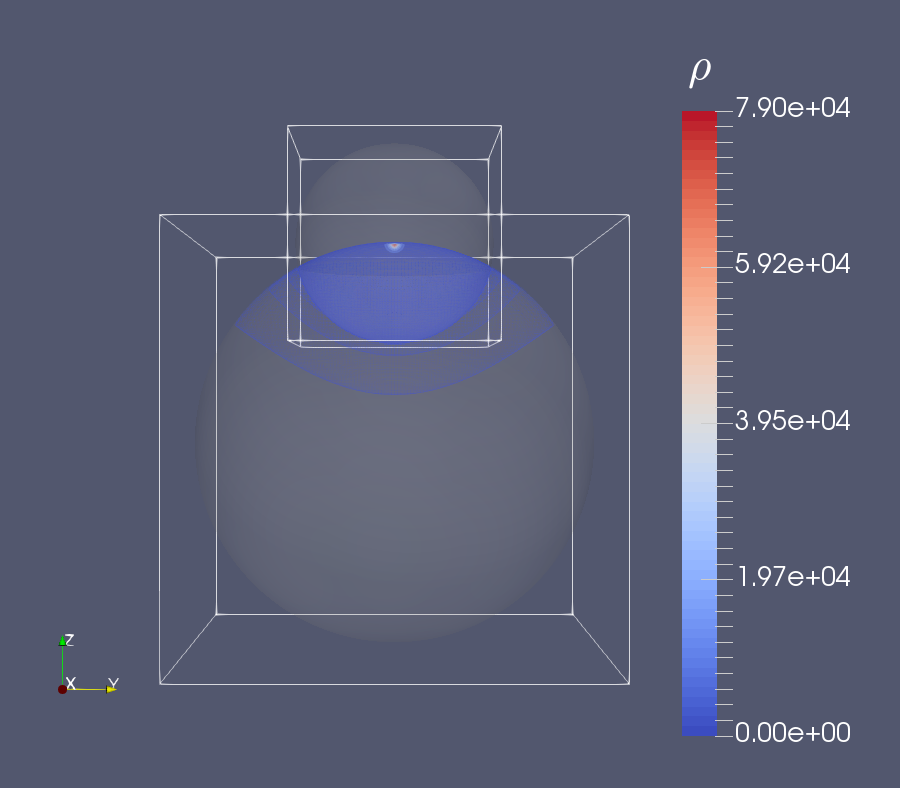}
        \caption{$\rho$ at $t\approx0.079812$}
    \end{subfigure}
    \begin{subfigure}[b]{0.4\textwidth}
        \includegraphics[width=\textwidth]{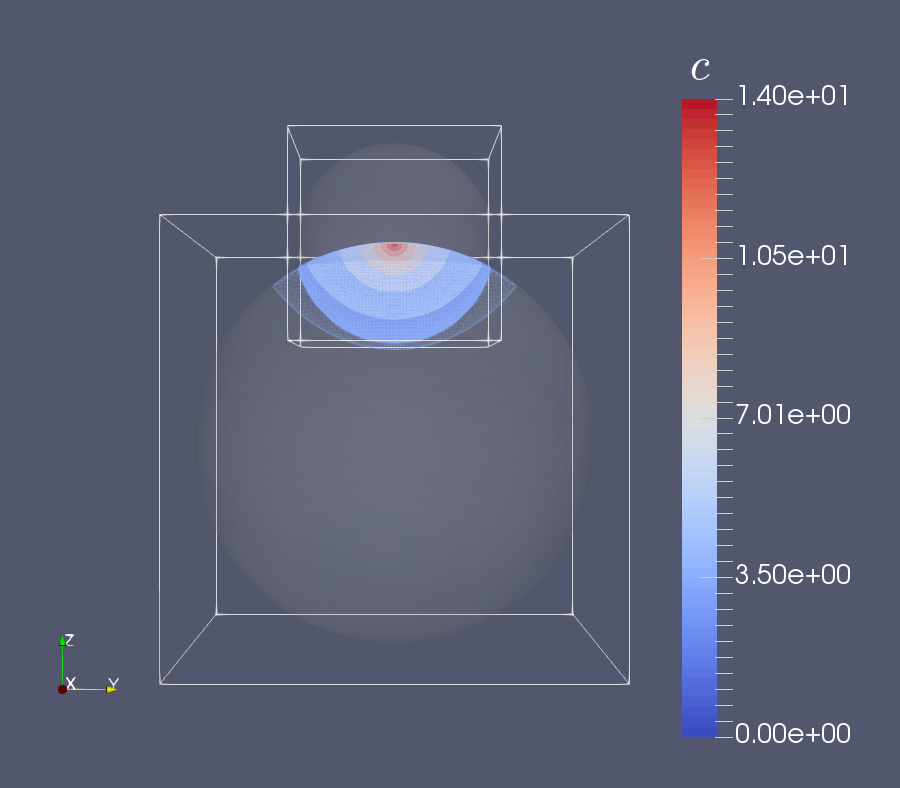}
        \caption{$c$ at $t\approx0.079812$}
    \end{subfigure}
    \caption{Isosurface plots of $\rho$ (left) and $c$ (right) for Test~\eqref{test:blowup_at_boundary} on mesh $N_1/N_2=255/127$ with grid sizes $h_1/h_2\approx1/4$ using domain decomposition approach at different times. 500 harmonics are used for each term of the 2-term extension operator along the boundary.}
    \label{fig:isosurface-blowup-boundary}
\end{figure}

\begin{figure}
    \centering
    \begin{subfigure}[b]{0.4\textwidth}
        \includegraphics[width=\textwidth]{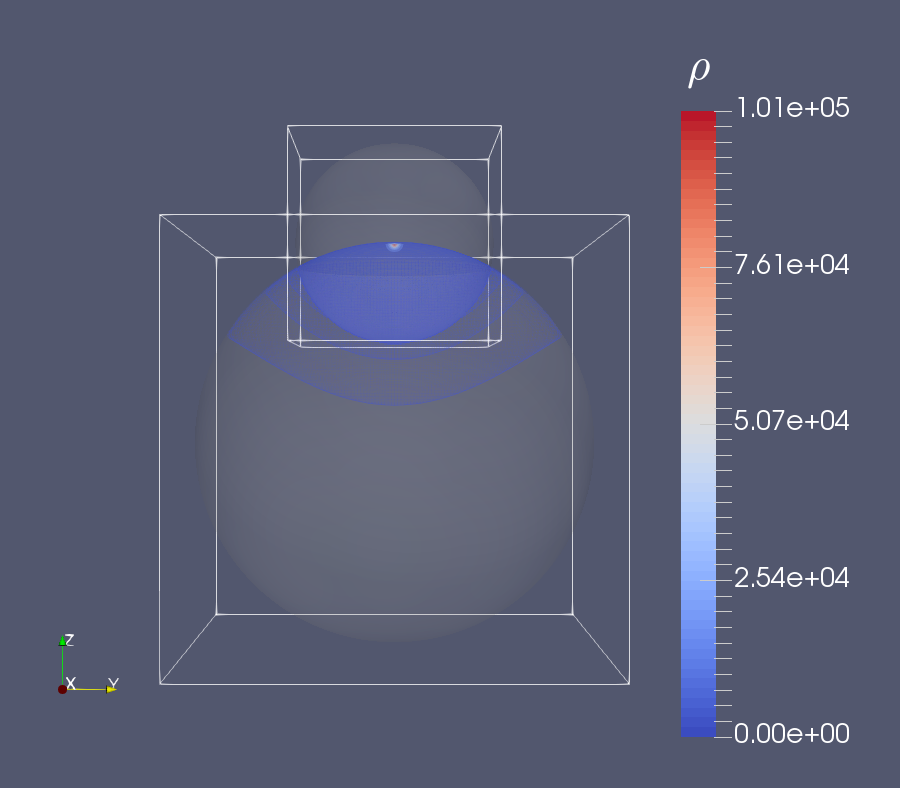}
        \caption{$\rho$ at $t\approx0.079846$}
    \end{subfigure}
    \begin{subfigure}[b]{0.4\textwidth}
        \includegraphics[width=\textwidth]{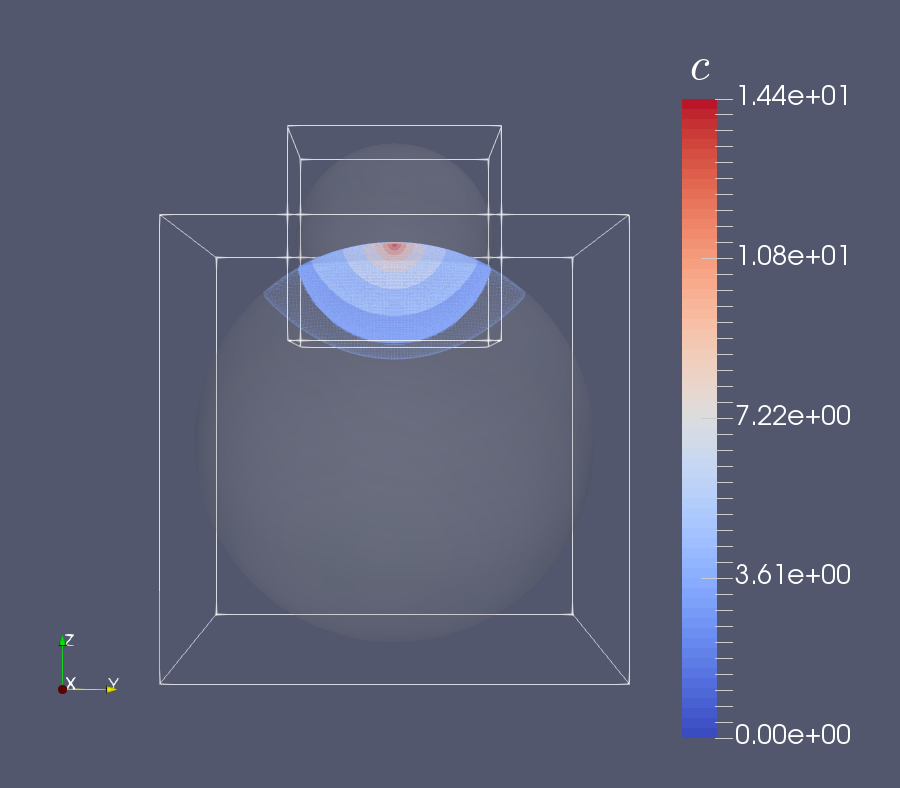}
        \caption{$c$ at $t\approx0.079846$}
    \end{subfigure}
    \caption{Isosurface plots of $\rho$ (left) and $c$ (right) for Test~\eqref{test:blowup_at_boundary} on mesh $N_1/N_2=255/127$ with grid sizes $h_1/h_2\approx1/4$ using domain decomposition approach at later time $t\approx0.079846$. 500 harmonics are used for each term of the 2-term extension operator along the boundary.}
    \label{fig:isosurface-blowup-boundary-later}
\end{figure}

Finally, in Fig.~\ref{fig:isosurface-blowup-boundary}, we present the 3D view of isosurface plots of $\rho$ and $c$ for Test~\eqref{test:blowup_at_boundary} at different times:
\begin{itemize} 
    \item At time $t=0.03$, we observe that both solutions $\rho$ and $c$ are in motion towards the north pole and continuity of solutions are also observed across the interface. 
    \item At time $t=0.04$, we observe that the peak value of $c$ already occurs at the north pole, while the peak value of $\rho$ is still in motion towards the north pole. This unveils the role that the chemoattractant plays and might help to explain why the blow-up solution of $\rho$ occurs at the north pole eventually. 
    \item At time $t\approx0.079812$, indeed we observe an almost singular solution at the north pole only. For both $\rho$ and $c$, the values are stratified away from the point of aggregation, i.e. the north pole. Moreover, no negative values are observed. We should also note that the simulations can proceed to a longer final time by choosing a different stopping criterion, but the simulation will not give a significant difference (see Fig.~\ref{fig:isosurface-blowup-boundary-later} for simulation results at $t\approx0.079846$).
\end{itemize}
The two cubic domains with white outlines in Fig.~\ref{fig:isosurface-blowup-boundary} and Fig.~\ref{fig:isosurface-blowup-boundary-later} are auxiliary domains for sub-domain $\Omega_1$ and sub-domain $\Omega_2$ respectively.

%%%%%%%%%%%%%%%%%%%%%%%%%%%%%%%%%%%%%%%%%%%%%%%%%%%%%%%%%%%%%%%%%%%%%%%%%%%%%%%%%%%%%%%%%%%%%%%%%%%%%%%%
% Conclusion
%%%%%%%%%%%%%%%%%%%%%%%%%%%%%%%%%%%%%%%%%%%%%%%%%%%%%%%%%%%%%%%%%%%%%%%%%%%%%%%%%%%%%%%%%%%%%%%%%%%%%%%%

%%%%%%%%%%%%%%%%%%%%%%%%%%%%%%%%%%%%%%%%%%%%%%%%%%%%%%%%%%%%%%%%%%%%%%%%%%%%%%%%%%%%%%%%%%%%%%%%%%%%%%%%
% Appendix
%%%%%%%%%%%%%%%%%%%%%%%%%%%%%%%%%%%%%%%%%%%%%%%%%%%%%%%%%%%%%%%%%%%%%%%%%%%%%%%%%%%%%%%%%%%%%%%%%%%%%%%%

\section{Appendix}\label{sec:appendix}

%%%%%%%%%%%%%%%%%%%%%%%%%%%%%%%%%%%%%%%%%%%%%%%%%%%%%%%%%%%%%%%%%%%%%%%%%%%%%%%%%%%%%%%%%%%%%%%%%%%%%%%%

\subsection{Proof to Theorem~\ref{thm:full_BEP}} \label{apd:proof-thm-full-BEP}

\begin{proof} For the reader's convenience, let us recall proof of Theorem~\ref{thm:full_BEP} in~\cite{Ryab,Epsh1}. 
First of all, let us assume that there is a density $u^{i+1}_{\gamma}$ that satisfies the BEP~\eqref{eqn:full_BEP}. Using $u^{i+1}_{\gamma}$, we can construct a superposition of the Particular Solution using (\ref{eqn:discrete_ap}--\ref{rhs:particular_solution}) and the Difference Potential using (\ref{eqn:discrete_ap}--\ref{eqn:discrete_ap_boundary}) and \eqref{rhs:difference_potentials}:
\begin{equation}\label{eqn:superposition_part_dp_solution}
u^{i+1}_{j,k,l}=P_{N^+\gamma}u^{i+1}_{\gamma}+G_{h,\Delta t}f^{i},\quad(x_j,y_k,z_l)\in M^0.
\end{equation} 
It can be seen that $u^{i+1}$ defined in \eqref{eqn:superposition_part_dp_solution} is a solution to the following discrete system on $N^0$:
\begin{equation}\label{eqn:superposition_part_dp}
\begin{aligned}
L_{h,\Delta t}u^{i+1}_{j,k,l}&=\left\{
\begin{array}{ll}
f^i_{j,k,l}, & \quad(x_j,y_k,z_l)\in M^+,\\
L_{h,\Delta t}[u^{i+1}_{\gamma}],&\quad (x_j,y_k,z_l)\in M^-,
\end{array}
\right.\\
u^{i+1}_{j,k,l}&=0,\quad(x_j,y_k,z_l)\in N^0\backslash M^0,
\end{aligned}
\end{equation}
which implies that $u^{i+1}_{j,k,l},\;(x_i,y_j,z_k)\in M^+$ in~\eqref{eqn:superposition_part_dp_solution} is some solution to~\eqref{eqn:shorter_discrete_chemotaxis} on $M^+$, and note that the trace of $u^{i+1}$ satisfies
\begin{equation}
Tr_{\gamma}u^{i+1}=Tr_{\gamma}\left(P_{N^+\gamma}u^{i+1}_{\gamma}+G_{h,\Delta t}f^{i}\right)=P_{\gamma}u^{i+1}_{\gamma}+G_{h,\Delta t}f^i_\gamma,
\end{equation}
from the definition of trace operator. Moreover, from our assumption on the density $u^{i+1}_{\gamma}$, we have $P_{\gamma}u^{i+1}_{\gamma}+G_{h,\Delta t}f^i_\gamma=u^{i+1}_{\gamma}$.
Thus, $Tr_{\gamma}u^{i+1}=u^{i+1}_{\gamma}$ and we can conclude that if some density $u^{i+1}_{\gamma}$ satisfies the BEP~\eqref{eqn:full_BEP}, it is a trace to some solution of system \eqref{eqn:shorter_discrete_chemotaxis}.

Now assume the density $u^{i+1}_{\gamma}$ is a trace to some solution of system \eqref{eqn:shorter_discrete_chemotaxis}: $u^{i+1}_{\gamma}=Tr_{\gamma}u^{i+1}$ where $u^{i+1}_{j,k,l}$, $(x_j,y_k,z_l)\in N^+$ is the unique solution to the difference equation~\eqref{eqn:shorter_discrete_chemotaxis} subject to certain boundary condition. Then by our assumption on $u^{i+1}$, $u^{i+1}$ with zero extension from $N^+$ to $N^0$ is also a solution to the system of difference equation~\eqref{eqn:superposition_part_dp} on $N^0$. Since $P_{N^+\gamma}u^{i+1}_{\gamma}+G_{h,\Delta t}f^i$ is also a solution the system of difference equation~\eqref{eqn:superposition_part_dp}, we conclude that:
\begin{equation}
u^{i+1}\equiv P_{N^+\gamma}u^{i+1}_{\gamma}+G_{h,\Delta t}f^i, \quad (x_j,y_k,z_l)\in N^+,
\end{equation}
by the uniqueness argument.
Finally, restriction of the $u^{i+1}$ from $N^+$ to the discrete grid boundary $\gamma$ would give us
\begin{equation}
Tr_{\gamma}u^{i+1}=Tr_\gamma\left(P_{N^+\gamma}u^{i+1}_{\gamma}+G_{h,\Delta t}f^i\right)=P_{\gamma}u^{i+1}_{\gamma}+G_{h,\Delta t}f^i_{\gamma}.
\end{equation}
and $u^{i+1}_{\gamma}=P_{\gamma}u^{i+1}_{\gamma}+G_{h,\Delta t}f^i_{\gamma}$, since $u^{i+1}_{\gamma}=Tr_{\gamma}u^{i+1}$ by our assumption. In other words, if $u^{i+1}_{\gamma}$ is a trace to some solution of system \eqref{eqn:shorter_discrete_chemotaxis}, it satisfies the BEP~\eqref{eqn:full_BEP}. 
\end{proof}

%%%%%%%%%%%%%%%%%%%%%%%%%%%%%%%%%%%%%%%%%%%%%%%%%%%%%%%%%%%%%%%%%%%%%%%%%%%%%%%%%%%%%%%%%%%%%%%%%%%%%%%%

\subsection{Proof to Proposition~\ref{prop:rank}}\label{apd:proof-prop-rank}
\begin{proof}
The proof is similar to ideas in \cite{Ryab,Epsh1}. Without boundary conditions, the difference equations~\eqref{eqn:shorter_discrete_chemotaxis} would admit infinite number of solutions and so will the BEP \eqref{eqn:full_BEP}, due to Theorem~\ref{thm:full_BEP}. However, if we provide density values $u^{i+1}_{\gamma_{ex}}$ on $\gamma_{ex}$, the difference equation~\eqref{eqn:shorter_discrete_chemotaxis} will be well-posed and will admit a unique solution. Hence, the BEP~\eqref{eqn:full_BEP} will have a unique solution if $u^{i+1}_{\gamma_{ex}}$ is given. The solution to BEP~\eqref{eqn:full_BEP} is uniquely determined by densities on $\gamma_{ex}$, thus the solution $u^{i+1}_{\gamma}$ to BEP~\eqref{eqn:full_BEP} has dimension $|\gamma_{ex}|$, which is the cardinality of set $\gamma_{ex}$. As a result, the BEP \eqref{eqn:full_BEP} has rank $|\gamma|-|\gamma_{ex}|=|\gamma_{in}|$.
\end{proof}

%%%%%%%%%%%%%%%%%%%%%%%%%%%%%%%%%%%%%%%%%%%%%%%%%%%%%%%%%%%%%%%%%%%%%%%%%%%%%%%%%%%%%%%%%%%%%%%%%%%%%%%%

\subsection{Proof to Theorem~\ref{thm:reduced_BEP}}\label{apd:proof-thm-reduced-BEP}
\begin{proof}
The proof is based on ideas in \cite{Ryab,Epsh1}. First, define the grid function:
\begin{align}\label{eqn:grid-function}
v^{i+1}:=P^{i+1}+G^{i+1}-u^{i+1}_{\gamma},\quad \mbox{on } N^0,
\end{align}
where $P^{i+1}$ is a solution to the AP~\eqref{eqn:discrete_ap}--\eqref{eqn:discrete_ap_boundary} on $N^0$ with right hand side \eqref{rhs:difference_potentials} using density $u^{i+1}_{\gamma}$, $G^{i+1}$ is a solution to the AP~\eqref{eqn:discrete_ap}--\eqref{eqn:discrete_ap_boundary} on $N^0$ with right hand side~\eqref{rhs:particular_solution}, and $u^{i+1}_\gamma$ is extended from $\gamma$ to $N^0$ by zero. By the construction of $v^{i+1}$, one can see that $v^{i+1}$ is a solution to the following difference equation:
\begin{equation}\label{eqn:full-laplace-equation}
\begin{aligned}
L_{h,\Delta t}[v^{i+1}]
&=\left\{
\begin{array}{ll}
f^i-L_{h,\Delta t}[u^{i+1}_{\gamma}],&\quad\mbox{on } M^+,\\
0, & \quad\mbox{on }M^-.
\end{array}
\right.
\end{aligned}
\end{equation}
Thus, we conclude that $v^{i+1}$ solves the following homogeneous difference equations on $M^-$:
\begin{align}\label{eqn:laplace-on-Mm}
L_{h,\Delta t}v^{i+1}=0,\quad \mbox{on } M^-.
\end{align}
In addition, due to the construction of $v^{i+1},P^{i+1}$ and $G^{i+1}$, the grid function $v^{i+1}$ satisfies the following boundary condition:
\begin{align}\label{eqn:bc}
v^{i+1}=0,\quad \mbox{on } N^0\backslash M^0.
\end{align}

Next, observe that the BEP~\eqref{eqn:full_BEP} and the reduced BEP~\eqref{eqn:reduced_BEP} can be reformulated using grid function $v^{i+1}$ in \eqref{eqn:grid-function} as follows:
\begin{align}\label{eqn:equiv-full-BEP}
v^{i+1}=0, \quad\mbox{on }\gamma, \quad(\mbox{BEP }\eqref{eqn:full_BEP}),
\end{align}
and
\begin{align}\label{eqn:equiv-reduced-BEP}
v^{i+1}=0, \quad\mbox{on }\gamma_{in},\quad (\mbox{BEP }\eqref{eqn:reduced_BEP}).
\end{align}
Hence, it is enough to show that \eqref{eqn:equiv-full-BEP} is equivalent to \eqref{eqn:equiv-reduced-BEP} to prove the equivalence between the BEP~\eqref{eqn:full_BEP} and the reduced BEP~\eqref{eqn:reduced_BEP}. First, note that if \eqref{eqn:equiv-full-BEP} is true, then \eqref{eqn:equiv-reduced-BEP} is obviously satisfied. 

Now, assume that \eqref{eqn:equiv-reduced-BEP} is true and let us show that \eqref{eqn:equiv-full-BEP} holds.  Let us consider problem~\eqref{eqn:laplace-on-Mm}: $L_{h,\Delta t}v^{i+1}=0$ on $M^-$, subject to boundary conditions \eqref{eqn:bc} and \eqref{eqn:equiv-reduced-BEP}, since the set $\gamma_{in}\cup(N^0\backslash M^0)$ is the boundary set for set $M^-$. Then we have the following discrete boundary value problem:
\begin{align}\label{eqn:discrete-bvp}
L_{h,\Delta t}v^{i+1}&=0,\quad\mbox{on }M^-,\\
v^{i+1}&=0,\quad \mbox{on }N^0\backslash M^0,\\
v^{i+1}&=0,\quad \mbox{on }\gamma_{in},
\end{align}
which admits a unique zero solution: $v^{i+1}=0$ on $M^-$. Since $\gamma_{ex}\subset M^-$, we conclude that $v^{i+1}=0$ on $\gamma_{ex}$, as well as on $\gamma\equiv\gamma_{ex}\cup\gamma_{in}$, which shows that \eqref{eqn:equiv-reduced-BEP} implies \eqref{eqn:equiv-full-BEP}. 

Thus, we showed that \eqref{eqn:equiv-full-BEP} is equivalent to \eqref{eqn:equiv-reduced-BEP}, and therefore, BEP~\eqref{eqn:full_BEP} is equivalent to the reduced BEP~\eqref{eqn:reduced_BEP}. Moreover, due to Proposition~\ref{prop:rank}, the reduced BEP~\eqref{eqn:reduced_BEP} consists of only linearly independent equations.

\end{proof}

%%%%%%%%%%%%%%%%%%%%%%%%%%%%%%%%%%%%%%%%%%%%%%%%%%%%%%%%%%%%%%%%%%%%%%%%%%%%%%%%%%%%%%%%%%%%%%%%%%%%%%%%
% Acknowledgements
%%%%%%%%%%%%%%%%%%%%%%%%%%%%%%%%%%%%%%%%%%%%%%%%%%%%%%%%%%%%%%%%%%%%%%%%%%%%%%%%%%%%%%%%%%%%%%%%%%%%%%%%

\section*{Acknowledgements}
The research of Y. Epshteyn and Q. Xia was partially supported by National Science Foundation Grant \# DMS-1112984. Q. Xia would also like to thank N. Beebe, M. Berzins, M. Cuma and H. Sundar for helpful discussions on different aspects of high-performance computing related to this work. Q. Xia gratefully acknowledges the support of the University of Utah, Department of Mathematics.

%%%%%%%%%%%%%%%%%%%%%%%%%%%%%%%%%%%%%%%%%%%%%%%%%%%%%%%%%%%%%%%%%%%%%%%%%%%%%%%%%%%%%%%%%%%%%%%%%%%%%%%%
% Bibliography
%%%%%%%%%%%%%%%%%%%%%%%%%%%%%%%%%%%%%%%%%%%%%%%%%%%%%%%%%%%%%%%%%%%%%%%%%%%%%%%%%%%%%%%%%%%%%%%%%%%%%%%%

\bibliographystyle{plainnat}      % mathematics and physical sciences
\bibliography{RevisionChemotaxis3D.bib}   % name your BibTeX data base

\end{document}